\documentclass{amsart}

\usepackage{amsmath}
\usepackage{amsthm}
\usepackage{amssymb}
\usepackage{braket}
\usepackage{mathtools}

\usepackage[all,cmtip,2cell]{xy}
\usepackage{soul}

\usepackage{tikz}
\usetikzlibrary{arrows,decorations.pathmorphing,backgrounds,positioning,fit,petri}

\usepackage[colorlinks=true, allcolors=blue]{hyperref}

\usepackage{color}

\newtheorem{thm}{Theorem}[section]
\newtheorem{theorem}[thm]{Theorem}

\newtheorem{lemma}[thm]{Lemma} 
\newtheorem{proposition}[thm]{Proposition}

\theoremstyle{definition}
\newtheorem{definition}[thm]{Definition}
\newtheorem{example}[thm]{Example}
\newtheorem{observation}[thm]{Observation}
\newtheorem{remark}[thm]{Remark}
\newtheorem{notation}[thm]{Notation}

\newcommand{\Sets}{\mathrm{Sets}}
\newcommand{\sSets}{\mathrm{sSets}}
\newcommand{\ssSets}{\mathrm{ssSets}}
\newcommand{\diGraphs}{\mathrm{diGraphs}}
\newcommand{\diGraphss}{\mathrm{diGraphs}_{\leq 1}}

\newcommand{\Graphs}{\mathrm{Graphs}}
\newcommand{\stGraphs}{\mathrm{Graphs}_{\leq 1}}

\newcommand{\schtwo}{\mathcal{C}_{\mathrm{Sch}}^2}
\newcommand{\mantwo}{\mathcal{C}_{\mathrm{Man}}^2}

\newcommand{\Vectk}{\mathrm{Vect}_\mathbb{K}}

\newcommand{\Hom}{\mathrm{Hom}}
\newcommand{\Fun}{\mathrm{Fun}}

\newcommand{\pr}{\mathrm{pr}}

\newcommand{\Spec}{\mathrm{Spec}}
\newcommand{\RngSpcs}{\mathrm{RngSpcs}}
\newcommand{\LRngSpcs}{\mathrm{LRngSpcs}}
\newcommand{\RSfin}{\mathrm{RngSpcs}_{\mathrm{Fin}}}
\newcommand{\Rings}{\mathrm{Rings}}
\newcommand{\Sch}{\mathrm{Sch}}

\newcommand{\qCoh}{\mathrm{qCoh}}
\newcommand{\cube}{\Xi}
\newcommand{\Top}{\mathrm{Top}}
\newcommand{\Man}{\mathrm{Man}}
\newcommand{\Specr}{\mathrm{Spec}_m}
\newcommand{\cCinf}{\cC^\infty}
\newcommand{\ralg}{\mathbb{R}\textnormal{-algebras}}

\newcommand{\colim}{\mathrm{colim}}
\newcommand{\G}{\boldsymbol{G}}

\newcommand{\cA}{\mathcal{A}}
\newcommand{\cB}{\mathcal{B}}
\newcommand{\cC}{\mathcal{C}}
\newcommand{\cS}{\mathcal{S}}
\newcommand{\cO}{\mathcal{O}}

\newcommand{\cF}{\mathcal{F}}

\newcommand{\cU}{\mathcal{U}}

\newcommand{\cV}{\mathcal{V}}

\newcommand{\cW}{\mathcal{W}}
\newcommand{\cP}{\mathcal{P}}

\newcommand{\cQ}{\mathcal{Q}}
\newcommand{\Pre}{\textnormal{Pre}}

\newcommand{\F}{\boldsymbol{F}}

\newcommand{\R}{\mathbb{R}}
\newcommand{\C}{\mathbb{C}}
\newcommand{\N}{\mathbb{N}}

\newcommand{\lra}{\longrightarrow}

\begin{document}

\title{On Gluing Data, Finite Ringed Spaces and schemes}

\maketitle

\centerline{R. Fioresi${}^{\star}$, A. Simonetti${}^\star$, F. Zanchetta${}^\star$}

\bigskip

\centerline{$^\star${\sl FaBiT, Universit\`{a} di
Bologna}}

\centerline{\sl Via Piero Gobetti 87, 40129 Bologna, Italy}

\begin{abstract} 
From descent theory to higher geometry, the idea of gluing has been embedded in many elegant and powerful techniques, proving instrumental for the solution of many problems. In this paper, we introduce a framework that allows to link important geometric objects, such as differentiable manifolds or schemes, to certain finite ringed spaces arising from sheaves on 2 dimensional semisimplicial sets, thus opening the door to their applications in fields such as discrete differential geometry.
\end{abstract}

\section{Introduction}
Discrete geometry has seen renewed interest over the last few years due to advances in machine learning and deep learning. Graph theory and methods from simplicial homotopy theory have been fundamental for the development of geometric deep learning (GDL)
\cite{br1, DLbook} and topological deep learning, \cite{TDLpos}, which extend deep learning to data with geometric or
topological structure. Indeed, algorithms like graph neural networks (GNNs) or simplicial neural networks (\cite{Spivak, bodnar2}), central in this area, are gaining traction and are becoming more widely used in applications. In addition, sheaves attached to these objects are receiving particular attention as well: sheaf neural networks (SNNs), now becoming an attractive field of research, rely on the theory of cellular sheaves \cite{HG19, curry}. \par
On the other side, algebraic and differential geometry very often describe their main
objects of interest by using the same combinatorics that discrete geometry is using. For example a differentiable manifold or scheme is often better understood as the \v{C}ech nerve of a covering of it: this simple yet crucial idea has been eventually formalized in descent theory, that played a key role in the theory of schemes, of stacks and in derived geometry. In this context, simplicial homotopy theory and category theory provide the background language for this notion to be used. These theories, very elegant and sophisticated, required the introduction of many technical tools that allowed the experts in the area to identify and prove many of the results that are now at the heart of geometry \cite{hartshorne, ja2001, lu2009}. However, these tools are not easily accessible to practitioners in neighbouring fields that do not have expert knowledge of these techniques. For example, simplicial
objects associated to manifolds or schemes are rarely perceived as ’data attached
to a discrete object’, therefore ready to used also in applied contexts, and their complexity is very often high. As a consequence, a
full transfer of the ideas and methods developed in fields such as differential geometry, higher geometry or algebraic geometry to discrete geometry has been limited and many discrete
geometric objects are often built in analogy with their non-discrete counterparts
and not from their non-discrete counterparts (or viceversa) in a more conceptual
way. \par Recent research in the field of algebraic geometry has started to address exactly this issue. In some recent work, Salas and their collaborators (see \cite{salas} and the references therein) study in more detail the notion of finite ringed spaces, i.e. ringed spaces having as underlying topological space a finite topological space. They find a suitable subcategory of finite ringed spaces that is equivalent, after localizing at certain weak equivalences, to the category of ordinary schemes; the relations between certain finite ringed spaces and differentiable manifolds are also considered, see \cite{salashomcoh} and the references therein. These works open the door to the understanding of highly structured objects, such as schemes, via simpler, in some sense discrete, objects. Indeed, because of the equivalence between finite topological spaces and partially ordered sets \cite{alex}, one readily notices that the datum of a finite ringed space effectively amounts to the datum of a poset and a sheaf of rings over it: at this point, the distance from the theory of cellular sheaves (that is the theory of sheaves, usually of vector spaces, on posets coming from regular cell complexes \cite{HG19}), recently introduced and used in discrete geometry and machine learning, becomes small. \par
The fact that from a sheaf of rings over a finite topological space we can build a
scheme does not come as a surprise as, under certain assumptions, one might think
to use the information encoded into discrete ringed spaces to glue together some
affine schemes, corresponding to the points of the ringed space. The information
contained in a finite topological space, or equivalently a partially ordered set, however, is not as structured or as readily informative as the one coming from a semisimplicial set or
a cell complex. These, however, arise naturally if we remind ourselves of the notion
of gluing datum (see \cite{GW}), that we can somehow think as a truncated Cech
nerve in dimension 2. Indeed, consider  a scheme (or, more generally, a ringed space) $X$, together with a cover of open affine subschemes $\lbrace U_i\subseteq X\rbrace_{i\in I}$. A ``gluing datum'' consisting
of the $U_i$'s, their intersections and identities coming from cocycle conditions then arises.
Gluing this datum amounts to ``glue'' the $U_i$'s along their intersections in such a way that the appropriate cocycle conditions, involving the transition maps, are satisfied and this process gives back a space isomorphic to $X$. Observe that we can build a 2 dimensional finite semisimplicial set out of $X$ taking the $U_i$'s as vertices, their intersections as edges and their triple intersections as 2-simplices.
We can then think of the process of gluing geometric objects as above as the process of gluing suitable data associated to this 2 dimensional semisimplicial set. Note that, to this end, no higher dimensional simplices are involved to reconstruct $X$. Let us focus now on the task of modeling schemes. Using the reasoning before as an heuristic argument, we identified a category, that we will denote as $\schtwo$, having as objects pairs consisting of a 2 dimensional finite semisimplicial set and a presheaf of rings on its face-incidence poset satisfying certain assumptions. A category having objects of this type is indeed desirable as in many parts of discrete geometry or in applications simple geometric objects such as graphs often provide enough geometric structure for the theory to be developed or for the experiments to be performed. Moreover
sheaf-based approaches to quantization in noncommutative geometry \cite{aflw, majidpaper, dimakis} can benefit by
such categorical characterization of algebraic geometric objects.\par
Our constructions resembles some others appearing in simplicial descent theory \cite{go12, ja2001}, but we were not able to find in the literature a category encoding the notion of gluing data as extra data attached to a low dimensional semisimplicial set explicitly spelled out and whose relation with finite ringed spaces is explored (usually simplicial objects in suitable categories are considered and finite ringed spaces are rarely used).
In addition, our use of the language of semisimplicial sets distinguishes our approach from the one of \cite{salas} and the one pursued in cellular sheaf theory: we link our objects to explicit 2 dimensional semisimplicial sets instead of posets that could possibly have higher complexity (or that can have non-geometric origin). 
It is worth noticing that Ladkani in \cite{SP06} studied the categories of sheaves over finite posets, while Liu in \cite{SQ13} studied stacks and torsors over quivers. Their work, however, is only partially related with ours. \par
We shall now describe the contents of the paper, highlighting our main results. \par
In Section \ref{graph-sec} we review standard material on semisimplicial sets, with a particular focus on graphs, and we establish our notation, see \cite{GJ1999, SP09, EW18} as references. 
\par
In Section \ref{fin-sec} we recast the theory of gluing data as the theory of certain functors of ringed spaces over the category of simplices of 2 dimensional semisimplicial sets. We study the properties of the resulting objects. Intuitively, we see gluing data as ``sheaves of ringed spaces over a graph". 
\par
In Section \ref{subsec:schemes} we recall some constructions of \cite{salas} and we introduce and study the category $\schtwo$. Then, we identify a class $\cW$ of ``weak equivalences" in this category
that can be promoted to what we call ``schematic right multiplicative system", a weaker notion of the ordinary right multiplicative system (see \ref{def:srms} and the discussion therein).
It then becomes possible to localize in a weak way, see Section \ref{subsec:schemes} for the details, the category $\schtwo$ at the class $\cW$ obtaining a category $\schtwo[\cW^{-1}]$. Notice that our localization procedure is slightly different from the ordinary one, since the natural candidate $\cW$ is not a right multiplicative system. We then obtain our main result.

\begin{theorem}[\ref{main-res}]
The category $\schtwo[\cW^{-1}]$ is equivalent to the category of quasi-compact and semi-separated schemes.
\end{theorem}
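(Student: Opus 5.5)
The plan is to compare $\schtwo$ with the schematic finite spaces of \cite{salas} and then transport Salas' localization theorem. That theorem identifies the category of quasi-compact, quasi-separated schemes with the localization of schematic finite spaces at qc-isomorphisms. I would build two functors. The first, $G\colon\schtwo\to\Sch$, glues the affine schemes attached to the vertices along the open subschemes attached to the edges, with cocycle conditions on the 2-simplices. The second, $F$, sends a quasi-compact semi-separated scheme $X$, with a finite affine cover $\{U_i\}$, to the 2-dimensional semisimplicial set of its \v{C}ech nerve truncated at triple intersections. On the face-incidence poset we put the presheaf $\sigma\mapsto\cO_X(U_\sigma)$. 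Semi-separatedness makes every $U_i\cap U_j$ and $U_i\cap U_j\cap U_k$ affine, so this object lies in $\schtwo$.

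\begin{enumerate}
\item Show that $G$ is well defined. By the reformulation of gluing data in Section~\ref{fin-sec}, an object of $\schtwo$ gives a gluing datum of affine schemes whose double and triple overlaps are affine. Its colimit is then a quasi-compact scheme, and it is semi-separated because the pairwise intersections of the affine charts are affine. On morphisms, functoriality comes from the induced maps of ringed spaces on the associated finite ringed spaces, followed by the natural map from each finite space to its gluing.
\item Show that $G$ sends every morphism in $\cW$ to an isomorphism. By definition $\cW$ should consist of the morphisms whose induced map on associated finite ringed spaces is a qc-isomorphism in the sense of \cite{salas}. I would check the local condition chartwise, using that a qc-isomorphism induces isomorphisms on the rings of the minimal opens after localization.
\item Use the universal property of the weak localization. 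It gives a functor $\bar G\colon\schtwo[\cW^{-1}]\to\Sch_{qc,ss}$, and $F$ composed with the localization functor gives a candidate inverse. For each scheme, $G F(X)\cong X$ by the gluing lemma.
\item Show that $FG(\mathcal X)\simeq\mathcal X$ in the localization. Two different affine covers, or the data an object carries versus the cover recovered from its gluing, are related by a zigzag through the common refinement. The refinement maps are in $\cW$ because they induce isomorphisms on the glued scheme.
\item Show that $\bar G$ is full and faithful on morphisms. A morphism of schemes $f\colon X\to Y$ does not come from a map of chosen covers. Instead I refine the cover of $X$ by pulling back the cover of $Y$, taking $f^{-1}(V_j)\cap U_i$. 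This is affine since $Y$ is semi-separated and the $U_i$ are affine, which realizes $f$ as a roof $s^{-1}\circ g$ with $s\in\cW$.
\end{enumerate}

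The main obstacle is that $\cW$ is only a schematic right multiplicative system, not an honest one. So the calculus of fractions must be verified in the weak form of Definition~\ref{def:srms}. Specifically, one must show that two roofs with the same image under $G$ become equal after precomposition with a further refinement in $\cW$. I would do this by choosing a common refinement of the two intermediate covers and using faithfulness of restriction on affine charts. Here the 2-dimensional truncation matters: the cocycle condition on triple overlaps is exactly what ensures that refined data still glue, and no higher simplices are needed.
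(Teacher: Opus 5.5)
Your actual argument is the paper's proof: essential surjectivity from $\Spec^\cC((\cU)^2_{\mathrm{sch}})\cong S$, fullness by writing $f$ as a roof through a refinement pulled back from $\cU_B$, and faithfulness through a common refinement, with equalities holding only up to schematic equality. The scaffolding around it, however, contains four claims that are false as stated and must be replaced, not just filled in.

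First, nothing can be transported from \cite{salas}. The finite spaces $A(P_\cA)$ of objects of $\schtwo$ are only paraschematic, and Remark~\ref{rmk:Ynotschematic} shows they are in general not schematic, so neither Salas' theorem nor his qc-isomorphisms apply. Also, $\cW$ is not ``maps inducing isomorphisms of glued schemes'' (your step 4). It consists of the $\psi=(f,\varepsilon)$ with $\varepsilon^\flat$ an isomorphism and $\Spec^\cC(\psi)$ affine, and the difference is real. The map $\psi_a$ in the paper's RMS3 counterexample (one vertex into the constant object on $a\to b$) induces an isomorphism of schemes. Yet for $R_X\neq 0$, $\varepsilon_a^\flat$ is $R_X\to 0$ on the star of $b$. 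With the correct definition, your step 2 is the chartwise check of Proposition~\ref{prop:techschtwo1}~1), and refinement maps lie in $\cW$ by Proposition~\ref{prop:techschtwo1}~3).

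Second, there is no universal property to invoke. The paper stresses that $\schtwo[\cW^{-1}]$ is not a localization in the usual sense, so $\Spec^\cC_\cW$ is defined by hand as $[A\xleftarrow{s}P\xrightarrow{f}B]\mapsto\Spec^\cC(f)\circ\Spec^\cC(s)^{-1}$. It is well defined because equivalence and composition of zig-zags are phrased through schematic equality (Proposition~\ref{prop:univloc}).

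Third, your $F$ is not a functor $\Sch\to\schtwo$, as your step 5 concedes, so steps 3--4 produce no inverse. Prove essential surjectivity plus full faithfulness instead, reducing to $A=(\cU_A)^2_{\mathrm{sch}}$ via the canonical inclusion $i_A$ and Lemma~\ref{lemma:factortech}.

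Fourth, faithfulness needs no ``faithfulness of restriction on affine charts'', and honest equality is unattainable since RMS3 fails. Take $Q=(\cV)^2_{\mathrm{sch}}$ with $\cV=\cU_A\times_X\cU^A_I\times_X\cU^A_J$ and the induced weak equivalences $u\colon Q\to I$, $v\colon Q\to J$. Then $s\circ u$ and $s'\circ v$ both induce $\mathrm{id}_X$. Consequently $f\circ u$ and $f'\circ v$ are automatically schematically equal, because $\Spec^\cC(f)\Spec^\cC(s)^{-1}=\Spec^\cC(f')\Spec^\cC(s')^{-1}$.

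One point where you improve on the paper is your fullness cover $\{U_i\cap f^{-1}(V_j)\}_{(i,j)}$. It is affine, being $U_i\times_Y V_j$ with $Y$ semi-separated. Its index set $I\times J$ is partitioned by either coordinate, so it is simultaneously a complete affine subcover of $\cU_A$ and of $f^{-1}(\cU_B)$. This lets you skip the paper's auxiliary complete affine subcover $\cQ$ of $f^{-1}(\cU_B)$ and the product $\cU_A\times_X\cQ$.
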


We also compare our constructions with the ones of Salas \cite{salas} and we point out to some possible future developments in the context of differential geometry while also making a few remarks useful to gain further insights on our work \cite{FSZ26}.

\medskip
{\bf Acknowledgments.}
The first author thanks Barbara Fantechi for helpful discussions. The first and last authors thank Francesco Vaccarino for helpful discussions and his interest in our work.
This research was supported by GNSAGA-Indam, INFN Gast Initiative,
PNRR MNESYS, PNRR National Center
for HPC, Big Data and Quantum Computing CUP J33C22001170001, PNNR SIMQuSEC
CUP J13C22000680006. 
This work was also supported by Horizon Europe EU projects MSCA-SE CaLIGOLA,
Project ID: 101086123, MSCA-DN CaLiForNIA, Project ID: 101119552,
COST Action CaLISTA CA21109.
\hyphenation{hos-pi-ta-li-ty}

\medskip
{\bf Notation.} We shall record some of the notations we employ throughout the paper here for the convenience of the reader.
\begin{itemize}
    \item We denote as ($\Delta_+$) $\Delta$
the category having as objects the partially
ordered sets $[n]=\lbrace 0\rightarrow\cdots\rightarrow n\rbrace$, $n\in\N$,
and arrows the (injective)
order preserving maps between them. For any $n\in\N$ e denote as
($\Delta_{n,+}$) $\Delta_{n}$
its full subcategory of objects having cardinality smaller or equal than $n$.
\item We will denote with $\Sets$ and $\Vectk$ the categories of sets and of finitely generated vector spaces over a base field $\mathbb{K}$ respectively. We will denote with $\Top$ the category of topological spaces and with $\RngSpcs$, $\LRngSpcs$ the categories of ringed spaces and locally ringed spaces respectively.
\item Given a set $X$ a preorder on $X$ is a binary relation that is reflexive and transitive. If this relation is also antisymmetric, then we have a partial order: in this case we will say that $X$ is a poset.
\item Given a category $\cC$ we will denote as $\cC^{op}$ its dual (or opposite) category. Finally, given two categories $\cA$ and $\cB$ we shall denote as $\Fun(\cA,\cB)$ or as $\cB^{\cA}$ the category of the covariant functors $\cA\rightarrow\cB$ and as $\Pre(\cA,\cB):=\Fun(\cA^{op},\cB)$ the category of contravariant functors $\cA\rightarrow\cB$. We refer to the objects of $\Pre(\mathcal{A},\mathcal{B})$ as
\textit{presheaves}.
If the category $\cB$ is the category $\Sets$ or it is clear from the context,
we shall denote $\Pre(\cC,\cB)$ simply as $\Pre(\cC)$.
\end{itemize}

\section{Semisimplicial sets, graphs and sheaves}\label{graph-sec}
In this section, we first recall what a (semi)simplicial set is and then we introduce some basic notation about graphs in this setting. Finally, we recall some basic notions about sheaves on a base and the Alexandrov topology associated with a preordered set.

\subsection{The categories of (semi)simplicial sets and digraphs}\label{ssets-sec}

\begin{definition}\label{ssets-def}
We define the category of \textit{(semi)simplicial} sets to be the category
$\sSets:=\Pre(\Delta,\Sets)$ ($\ssSets:=\Pre(\Delta_+,\Sets)$).
For a given (semi)simplicial set $X$, for every $n\in\N$, we shall denote the set $X([n])$ as $X_n$ and call its elements \textit{simplices}. Replacing the category of sets with an arbitrary category $\cC$ gives us the notion of a \textit{(semi)simplicial object in} $\cC$. We define \textit{(semi)cosimplicial sets} as $\Fun(\Delta,\Sets)$ ($\Fun(\Delta_+,\Sets)$) and $n$-dimensional (semi)simplicial sets and (semi)cosimplicial sets as $\Pre(\Delta_n,\Sets)$, ($\Pre(\Delta_{n,+},\Sets)$), $\Fun(\Delta_n,\Sets)$, ($\Fun(\Delta_{n,+},$ $\Sets)$) respectively. We say that a (co)semisimplicial set $X$ has dimension $n$ if $X_n\neq\emptyset$ and $X_m=\emptyset$ for $m>n$. 
\end{definition}

\begin{remark}
By definition, our $n$-dimensional simplicial sets are not simplicial sets, while in literature a simplicial set is usually said to have dimension $n$ if it is isomorphic to its $n$-skeleton \cite{GJ1999}. However, the inclusion functor $\Delta_n\subseteq\Delta$ induces a truncation function $tr_n:\sSets\rightarrow\sSets_n$ having as a fully faithful left adjoint $sk_n:\sSets_n\rightarrow\sSets$ the $n$-skeleton functor. For a given simplicial set $X$, $sk_n\circ tr_n (X)$ is the usual $n$-skeleton (see also \cite[\href{https://kerodon.net/tag/04ZY}{Tag 04ZY}]{kerodon}). This recovers the link between our definition and the most common definition.
\end{remark}

\begin{definition}
We define the category of \textit{directed graphs} to be $\Pre(\Delta_{1,+},\Sets)$ and
we denote it $\diGraphs$. Replacing the category of sets with an arbitrary category $\cC$ gives us the notion of \textit{directed graphs in} $\cC$.
\end{definition}

Given a digraph $G$, we will call the sets $G_0$ and $G_1$ (or $V_G$ and $E_G$) the set of \emph{vertices} and \emph{edges} respectively. We can represent a digraph using the following diagram, where the maps $h_G$ and $t_G$ are called head and tail.
\begin{displaymath}
\xymatrix{E_G\ar@<1ex>[r]^{h_G} 
\ar@<-1ex>[r]_{t_G}& V_G
}\end{displaymath}
Note that this definition of directed graph allows graphs with multiple edges with same head and tail (sometimes called \emph{directed multi-graphs}) and self-loops. We denote with $\diGraphs_{\leq1}$ the full subcategory of $\diGraphs$ whose objects have at most one edge connecting each pair of vertices. In addition, note that for a given $G \in \diGraphs$ both $V_G$ and $E_G$ can be infinite sets.

\begin{remark}\label{dim1simp}
Semisimplicial sets and simplicial sets are closely related. The latter category is the most frequently used in the context of simplicial homotopy theory because of the nice properties of the geometric realisation \cite{May1992, GJ1999}.
The former appeared more frequently in early modern algebraic geometry, for example in \cite{SGA4}. For the purpose of this work, semisimplicial objects provide a more natural context because of their easier combinatorics. The reader must be assured, however, that they are not an artificial category: there exists a geometric realisation for semisimplicial objects (the ``fat realisation'') and the geometric realisation of a simplicial set $X$ is always homotopy equivalent to the geometric realisation of the semisimplicial set obtained from $X$ precomposing with the inclusion $\Delta_+^{op}\subseteq \Delta^{op}$ (see \cite[Section 2]{EW18} and the discussion therein). Moreover in dimension lower or equal to one the categories
of simplicial sets and semisimplicial sets are equivalent, \cite[\href{https://kerodon.net/tag/001N}{Proposition 001N}]{kerodon}, thus directed graphs may be viewed as simplicial sets. 
\end{remark}

As semisimplicial sets are simply presheaves of sets, we have the following \cite{KS2006}.

\begin{proposition}
The category of semisimplicial sets has all limits and colimits and they are computed pointwise.
\end{proposition}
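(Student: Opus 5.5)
The claim is that $\ssSets=\Pre(\Delta_+,\Sets)=\Fun(\Delta_+^{op},\Sets)$ has all limits and colimits, computed pointwise. Since it is a functor category into $\Sets$, the plan is to reduce both statements to completeness and cocompleteness of $\Sets$ and transport them objectwise. Let $D:J\to\ssSets$ be a small diagram.

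\textbf{Step 1: build the candidate limit.} Set
\[
L_n:=\lim_{j\in J} D(j)_n \quad\text{in } \Sets,
\]
using the standard construction of limits in $\Sets$ as a subset of a product. For a morphism $\alpha:[m]\to[n]$ in $\Delta_+$, each $D(j)(\alpha):D(j)_n\to D(j)_m$ is natural in $j$, because every $D(u)$ for $u$ in $J$ is a natural transformation. Hence these maps form a cone over the diagram $j\mapsto D(j)_m$. The universal property then gives a unique map $L(\alpha):L_n\to L_m$. Uniqueness forces $L(\mathrm{id})=\mathrm{id}$ and $L(\beta\circ\alpha)=L(\alpha)\circ L(\beta)$, so $L$ is a presheaf on $\Delta_+$. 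The projections $L\to D(j)$ are natural by construction, so they form a cone in $\ssSets$.

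\textbf{Step 2: verify the universal property.} Take any cone $X\to D(j)$ in $\ssSets$. Evaluating at each $[n]$ gives a cone in $\Sets$, hence a unique map $X_n\to L_n$. We must check these maps are natural in $[n]$. For each $\alpha$, the two composites $X_n\to L_m$ agree after composing with every projection to $D(j)_m$, so by uniqueness in the universal property of $L_m$ they agree.

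\textbf{Step 3: colimits.} The argument is the same, using colimits in $\Sets$ (a quotient of a disjoint union) and their universal property in place of that of limits.

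\textbf{Where care is needed.} No step is a genuine obstacle; this is the standard fact about functor categories, found for instance in \cite{KS2006}. The only points requiring attention are the functoriality check in Step 1 and the naturality check in Step 2. Both follow formally from the uniqueness clause of the universal property in $\Sets$. The proof uses nothing about $\Delta_+$ beyond its being small, so it applies verbatim to $\Pre(\cA,\Sets)$ for any small $\cA$, and in particular to $\diGraphs$.
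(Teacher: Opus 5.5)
Your proposal is correct and takes the same route as the paper: the paper gives no argument beyond observing that $\ssSets=\Pre(\Delta_+,\Sets)$ is a presheaf category and citing \cite{KS2006} for the pointwise construction of limits and colimits. You have simply written out the standard objectwise argument behind that citation, and your functoriality and naturality checks via the uniqueness clause are exactly what is needed.
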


In particular, fibre products of semisimplicial sets exist. We define the category of \textsl{(semi)simplices} of $X$, $\Gamma(X)$, for a given (semi)simplicial set $X$, see \cite{hovey}. We recall briefly its construction.

For a given (semi)simplicial set $X$, by Yoneda's Lemma, the maps $\Delta^n\rightarrow X$ (resp.  $\Delta_+^n\rightarrow X$) are in bijection with the elements of $X([n])=X_n$,
where $\Delta^n=\Hom_\Delta(-,[n])$ (resp. $\Delta^n_+=\Hom_{\Delta_+}(-,[n])$). So we take such maps as the objects for $\Gamma(X)$ as $n$ varies. To define the morphisms of $\Gamma(X)$, we specify when we have an arrow between two objects in $\Gamma(X)$ that is,$a \in X_n$ and $b \in X_m$. We say we have such an arrow, whenever we can write a commutative diagram:
\begin{equation}\label{arr-sim-eq}
\xymatrix{\Delta^n \ar[rr] \ar[dr]_a & & \Delta^m\ar[dl]^b\\
 & X &}
\qquad \hbox{resp.} \quad
\xymatrix{\Delta^n_+ \ar[rr] \ar[dr]_a & & \Delta_+^m\ar[dl]^b\\
 & X &}
\end{equation}
where again we employ Yoneda's lemma.

\begin{definition}\label{catofsimp}
For a given (semi)simplicial set $X$, we define $\Gamma(X)$ the \textit{category of (semi)simplices of} $X$ as the category with objects the maps $\Delta^n\rightarrow X$ (resp.  $\Delta_+^n\rightarrow X$) and morphisms the diagrams in (\ref{arr-sim-eq}). Notice that a morphism $f:X\rightarrow Y$ of (semi)simplicial sets induces a functor
between their respective categories of simplices that we will denote as $\Gamma(f)$.
\end{definition}

\begin{observation}\label{digraph-poset}
For a semisimplicial set $X$, the objects and the arrows of $\Gamma(X)$ can be used to define a poset $P_X$ where the elements are the simplices of $X$ and $x\leq y$ if and only if $\Hom_{\Gamma(X)}(x,y)$ is non empty.
\end{observation}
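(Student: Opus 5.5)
The plan is to check the three axioms of a partial order directly on the relation $x\leq y$ iff $\Hom_{\Gamma(X)}(x,y)\neq\emptyset$. This relation lives on the set of simplices of $X$, that is, on the disjoint union of the $X_n$. The observation is stated for semisimplicial sets, so throughout we work with $\Delta_+$ and $\Delta_+^n$. By Yoneda's lemma, a morphism $a\to b$ in $\Gamma(X)$ with $a\in X_n$, $b\in X_m$ is the same as an injective order preserving map $\phi:[n]\to[m]$ with $X(\phi)(b)=a$.

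\textbf{Reflexivity.} For every simplex $a\in X_n$, the identity of $[n]$ gives the identity morphism $a\to a$. Hence $a\leq a$.

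\textbf{Transitivity.} Suppose $a\to b$ via $\phi$ and $b\to c$ via $\psi$. Then $\psi\circ\phi$ is injective and order preserving. Since $X$ is a contravariant functor,
$$X(\psi\circ\phi)(c)=X(\phi)\bigl(X(\psi)(c)\bigr)=X(\phi)(b)=a,$$
so $a\leq c$. This is simply the statement that composition in $\Gamma(X)$ makes sense.

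\textbf{Antisymmetry.} This is the only step that uses the semisimplicial hypothesis, and I expect it to be the main point. Suppose $a\leq b$ and $b\leq a$ with $a\in X_n$, $b\in X_m$.
\begin{itemize}
\item The maps in $\Delta_+$ are injective, so $a\leq b$ gives an injection $[n]\to[m]$, hence $n\leq m$. Symmetrically $m\leq n$, so $n=m$.
\item An injective order preserving self map of the finite chain $[n]$ must be the identity.
\item Therefore the map $\phi$ witnessing $a\leq b$ is $\mathrm{id}_{[n]}$, and $a=X(\mathrm{id})(b)=b$.
\end{itemize}

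This argument would fail for simplicial sets, where degeneracies give non-injective maps and the relation is only a preorder. That is why the observation is restricted to semisimplicial $X$.
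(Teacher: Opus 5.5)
Your proposal is correct: reflexivity and transitivity come from the identities and composition in $\Gamma(X)$, and antisymmetry follows because maps in $\Delta_+$ are injective, which forces equal dimensions and then forces the connecting map to be the identity. The paper states this observation without any proof, and your argument is exactly the routine verification it leaves implicit; your remark that the relation is only a preorder for simplicial sets correctly explains why the semisimplicial hypothesis matters.
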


\begin{remark}\label{cat-poset}
Notice that for a semisimplicial set $X$ there is a non-canonical isomorphism between the category $\Gamma(X)$ and the category associated with the poset $P_X$ having as objects the elements of $P_X$ and arrows $x \leq y$.
\end{remark}

Recall that an undirected graph $G$ is a pair $(V,E)$ of vertices and edges, where the edges are defined as a multiset of unordered pairs of vertices and morphisms are defined accordingly (see \cite{diestel, godsil} for the case $V$ and $E$ finite sets, though here we are more general).
Let $\Graphs$ denote their category. We call the category of simple undirected graphs, i.e. undirected graphs having at most one edge connecting each pair of vertices, as $\stGraphs$.

\begin{observation}\label{graph-poset} In Observation \ref{digraph-poset} we have given a way to associate a poset to a semisimplicial set, and therefore to a directed graph.
We can do the same with undirected graphs as follows.
Let $G=(V,E)\in\Graphs$ be an undirected graph. Define the poset $\mathcal{P}_G$ associated to it having as underlying set $V\cup E$ and where $x\leq y$ if and only if $x$ is a vertex of the edge $y$ or $x=y$.

\end{observation}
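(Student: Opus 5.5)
The content of Observation \ref{graph-poset} that needs proof is that the relation $\leq$ on $V\cup E$ really is a partial order, so that $\mathcal{P}_G$ is a poset. I would first fix conventions so that the statement is meaningful. Since $E$ is a multiset, I treat it as a set of edge \emph{labels} together with an incidence assignment sending each $e\in E$ to its unordered pair of endpoints; a self-loop is allowed and then has a single endpoint. I also take $V\cup E$ to mean the disjoint union $V\sqcup E$, so that no vertex is equal to an edge. Everything below rests on this disjointness and on the fact that ``$x$ is a vertex of $y$'' can only hold when $x\in V$ and $y\in E$.

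\emph{Reflexivity.} This holds by definition, because $x=y$ gives $x\leq y$.

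\emph{Antisymmetry.} Suppose $x\leq y$, $y\leq x$ and $x\neq y$. The first relation forces $x\in V$ and $y\in E$. The second forces $y\in V$ and $x\in E$. This contradicts $V\cap E=\emptyset$, so $x=y$.

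\emph{Transitivity.} Take $x\leq y\leq z$. If either relation is an equality, the conclusion $x\leq z$ is immediate. Otherwise $x\neq y$ and $y\neq z$. Then $x\leq y$ with $x\neq y$ gives $y\in E$. But $y\leq z$ with $y\neq z$ would require $y\in V$, which is impossible. Hence this case cannot occur, and transitivity follows. In particular every chain in $\mathcal{P}_G$ has length at most one.

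Optionally, I would add a consistency check with Observation \ref{digraph-poset}. Choose any orientation of $G$, that is, a tail and head map $E\to V$ realising each unordered pair, to obtain a directed graph $X$. Then the identity on $V\sqcup E$ is an order isomorphism $P_X\cong\mathcal{P}_G$. The reason is that $\Hom_{\Gamma(X)}(v,e)\neq\emptyset$ exactly when $v$ is the head or the tail of $e$, and there are no nonidentity maps in the other direction or between two distinct simplices of the same dimension. This also shows that $\mathcal{P}_G$ does not depend on the chosen orientation.

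The only real obstacle is the bookkeeping in the conventions: the multiset of edges and the possible overlap of $V$ and $E$. Once edges are treated as labelled elements of a set disjoint from $V$, each axiom is a one-line check.
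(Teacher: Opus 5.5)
Your verification is correct. The paper gives no proof here: it treats $\mathcal{P}_G$ as a definition whose poset axioms are evident, and your check is exactly the routine argument it leaves implicit. That check treats edges as labelled elements of a set disjoint from $V$, so that every strict relation has the form $v<e$.

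Your comparison with $P_X$ for an arbitrary orientation of $G$ is a correct extra that the paper does not spell out. It is useful, since it justifies the phrase ``we can do the same'' and shows that $\mathcal{P}_G$ does not depend on the chosen orientation.
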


\subsection{Sheaves on preordered sets}
\label{sec-preorder}
A topological space is called \textit{finite} if it consists of a finite number of points. For a given finite space $X$ and a point $p\in X$, we define
$$U_p:=\textnormal{ smallest open subset of }X\textnormal{ containing }p$$
$$C_p:=\overline{p}=\textnormal{ smallest closed subset of }X\textnormal{ containing }p$$
Given a finite topological space $X$, we can define the structure of a (finite) preorder on $X$ by setting $p\leq q$ if and only if $U_p\supseteq U_q$ (equivalently, $p\in \overline{q}$). Conversely, given a (not necessarily finite) preorder $\leq$ on a set $P$, we can see $P$ as a topological space, whose topology is generated by the base consisting of the following open sets:
$$U_p:=\lbrace q\in P\: |\: q\geq p\rbrace\qquad p\in P$$
This topology is called the \textit{Alexandrov topology} \cite{alex} (see also \cite{salas}) and we denote as $\cB_P$ the base we used to generate it.

\begin{notation}\label{notation:alexandrovtop}
Whenever necessary, to avoid confusion, we shall use a different notation for a \textit{preorder} $P$ (i.e. a preordered set) and the topological space $A(P)$ associated with it, as above. Similarly, for a finite topological space $X$ we will write $P(X)$ for the preorder associated with it as above. We note explicitly that if $X$ is a finite topological space and $P(X)$ is the associated preorder, then $X=A(P(X))$. We denote by
$\mathrm{Open}(X)$ the category of the open sets of a topological space $X$. Recall that sheaves on $X$ are defined as presheaves on $\mathrm{Open}(X)$ such that the identity and the gluability axioms hold (see for example \cite{vakil} Definition 2.2.6).
\end{notation}

We have the following result, see \cite{salas} for a clear review of these statements.

\begin{theorem}\label{Thm:alextoppos}
The following statements hold:
\begin{itemize}
\item[1)] The above construction defines an equivalence of categories between finite topological spaces $\mathrm{FTop}$ and finite preordered sets $\mathrm{PreSets}$ given by the functors:
$$P: \mathrm{FTop}\lra \mathrm{PreSets}, \qquad A:\mathrm{PreSets}\lra \mathrm{FTop}$$
\item[2)] A finite topological space $X$ is $T_0$ (i.e. different points have different closures) if and only if the preorder relation $\leq$ induced by the topology is antisymmetric i.e. $X$ is a poset.
\end{itemize}
\end{theorem}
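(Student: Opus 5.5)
The plan is to construct the two functors explicitly, show they are mutually inverse up to natural isomorphism, and then verify the $T_0$ characterisation directly from the definitions.

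\textbf{Step 1: the functors.} For a finite space $X$ set $P(X)=(X,\leq)$, where $p\leq q$ iff $U_p\supseteq U_q$. This relation is reflexive and transitive because inclusion is, so $P(X)$ is a preorder. A continuous map $f:X\to Y$ satisfies $f^{-1}(U_{f(p)})\ni p$, hence $U_p\subseteq f^{-1}(U_{f(p)})$. Applying this at $q$, for $p\leq q$ we get $q\in U_q\subseteq U_p$, so $f(q)\in U_{f(p)}$ and therefore $U_{f(q)}\subseteq U_{f(p)}$, i.e.\ $f(p)\leq f(q)$. Thus $P$ sends continuous maps to monotone maps and is functorial, since it is the identity on underlying maps.

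Conversely, for a finite preorder $Q$ let $A(Q)$ be $Q$ with the Alexandrov topology. The sets $U_p=\{q\geq p\}$ form a base: by transitivity, any $r\in U_p\cap U_{p'}$ satisfies $U_r\subseteq U_p\cap U_{p'}$. The open sets are exactly the up-closed subsets. A monotone map $g$ pulls back up-closed sets to up-closed sets, so it is continuous; hence $A$ is a functor, again the identity on underlying maps.

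\textbf{Step 2: the two composites are identities.} This is the key step. In $A(Q)$ the smallest open set containing $p$ is the up-closed set $\{q\geq p\}$, so $U_p\supseteq U_q$ iff $q\geq p$, and therefore $P(A(Q))=Q$. For a finite space $X$, the set $U_p$ is open because it is a finite intersection of opens. Every open $V$ equals $\bigcup_{p\in V}U_p$, and it is up-closed since $p\in V$ and $p\leq q$ give $q\in U_q\subseteq U_p\subseteq V$. Conversely, every up-closed set is the union of the open sets $U_p$ over its points, because $U_p=\{q: q\geq p\}$ in $X$: indeed $q\in U_p$ iff $U_q\subseteq U_p$. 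Hence the topology of $X$ coincides with the Alexandrov topology, giving $X=A(P(X))$, as already noted in Notation \ref{notation:alexandrovtop}. Both composites are the identity on objects and on morphisms, so $P$ and $A$ form an isomorphism, and in particular an equivalence, of categories. The only point that needs care is the equality $U_p=\{q\geq p\}$ in $X$, which uses finiteness; everything else is bookkeeping.

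\textbf{Step 3: the $T_0$ statement.} First, $p\in\overline{q}$ iff every open set containing $p$ contains $q$, iff $q\in U_p$, iff $p\leq q$. So $\overline{p}=\overline{q}$ iff $p\leq q$ and $q\leq p$. Hence distinct points have distinct closures exactly when $p\leq q\leq p$ forces $p=q$, which is antisymmetry.
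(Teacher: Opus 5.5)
Your argument is correct and complete. You show that the open sets of a finite space are exactly the up-closed sets for $\leq$, that $P$ and $A$ are mutually inverse on the nose (so you get an isomorphism of categories, matching the paper's remark $X=A(P(X))$), and that $p\in\overline{q}$ iff $p\leq q$, which gives the $T_0$ criterion. The paper gives no proof of its own and simply defers to the review in \cite{salas}. Your direct verification is the standard argument for this result, so it supplies what the paper leaves to the citation.
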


\begin{remark}\label{rmk:irred}
For a given finite topological space $X$, the irreducible open and
closed sets are $U_p$ and $C_p$, $p\in X$.
The same holds true if $X$ is a topological space of the form $A(P)$
for some (not necessarily finite) preorder $P$.
\end{remark}

\begin{observation}\label{obs:gross}
If $X$ is a semisimplicial set, we obtain the poset $P_X$ as in Obs. \ref{digraph-poset},
hence we can consider sheaves on $A(P_X)$. It is well known (see for example \cite{vakil} 2.5.1) that sheaves on a topological space are uniquely determined by a sheaf on a base for the underlying topological space. In our case, the base $\cB_{P_X}$ of $A(P_X)$ consists of irreducible open subsets of $A(P_X)$. As a consequence presheaves on the base $\cB_{P_X}$ are automatically sheaves on a the base $\cB_{P_X}$. In addition if we consider the full subcategory of $\mathrm{Open}(A(P_X))$ having as objects the elements of $\cB_{P_X}$, we see that it is isomorphic to $P_X$. Putting all together, we get the following proposition.
\end{observation}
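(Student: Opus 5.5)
The plan is to prove the equivalence the observation is leading up to: for a semisimplicial set $X$ and a complete category $\cC$ (e.g. $\Sets$ or rings), the category of $\cC$-valued sheaves on $A(P_X)$ is equivalent to $\Fun(P_X,\cC)$, and hence, by Remark \ref{cat-poset}, to $\Fun(\Gamma(X),\cC)$. I would check the three ingredients listed in the observation in order and then compose two equivalences.

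First, I identify the base. By construction $\cB_{P_X}=\{U_p\}_{p\in P_X}$ with $U_p=\{q\ge p\}$. Since $P_X$ is a poset, $U_q\subseteq U_p$ holds iff $q\in U_p$, i.e. iff $p\le q$. Moreover $U_p=U_q$ forces $p=q$ by antisymmetry. So the full subcategory of $\mathrm{Open}(A(P_X))$ on $\cB_{P_X}$ is isomorphic to $P_X^{op}$. Consequently a presheaf on the base, being contravariant with respect to inclusions, is exactly a covariant functor $P_X\to\cC$: to $p\le q$ it assigns the restriction $F(U_p)\to F(U_q)$.

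Second, I show that every presheaf on the base is automatically a sheaf on the base. The key point is that $U_p$ is the smallest open set containing $p$. Hence if $U_p=\bigcup_i V_i$ with $V_i\in\cB_{P_X}$, some $V_{i_0}$ contains $p$, which forces $V_{i_0}=U_p$. Any covering of a basic open therefore contains the identity covering. Both the identity and the gluability axioms then hold trivially: a compatible family is determined by its component on $U_p$, and that component restricts correctly to every other $V_i$ by compatibility on $V_i\cap U_p=V_i$. This is where the irreducibility/minimality from Remark \ref{rmk:irred} is used.

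Third, I invoke the standard fact (\cite{vakil} 2.5.1) that restriction to a base gives an equivalence between sheaves on a space and sheaves on a base; the quasi-inverse sends $F$ to $U\mapsto\lim_{U_p\subseteq U}F(U_p)$. Composing with the first two steps gives
\[
\mathrm{Sh}(A(P_X),\cC)\simeq \Fun(P_X,\cC)\cong\Fun(\Gamma(X),\cC).
\]
The main obstacle is the bookkeeping of variance, namely deciding whether one gets $P_X$ or $P_X^{op}$ and checking that the direction matches the face maps of $\Gamma(X)$. The one genuinely non-formal point is the covering argument of the second step. If $P_X$ is infinite, that argument still works verbatim, since it uses only the minimality of $U_p$ and not finiteness.
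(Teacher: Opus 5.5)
Your proposal is correct and follows the same three-step route as the paper's sketch. You identify the base with the poset, observe that every presheaf on the base is already a sheaf because any basic cover of $U_p$ must contain $U_p$ itself, and then extend from the base as in \cite{vakil} 2.5.1; your minimality argument is the precise content of the paper's appeal to irreducibility, and in fact $U_p$ need not be irreducible in the usual sense.

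Your variance is the right one, and it is not what Proposition \ref{prop:gross} literally says. With $U_p=\{q\ge p\}$ the base is isomorphic to $P_X^{op}$, so sheaves correspond to covariant functors $P_X\to\cC$, which is how the paper itself uses the result in Observation \ref{rmk:gbullet}; the phrasing ``isomorphic to $P_X$'' and $\Pre(P_X)$ in Proposition \ref{prop:gross} is off by an $op$. Only your extra final identification with $\Fun(\Gamma(X),\cC)$ needs care: Remark \ref{cat-poset} fails when $X$ has loops, since a loop $e$ at $v$ gives two parallel arrows $v\to e$ in $\Gamma(X)$.
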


\begin{proposition}\label{prop:gross}
Let be $X$ a semisimplicial set or a finite topological space. Consider its poset $P_X$. Then the category of sheaves on $A(P_X)$ is equivalent to the category $\Pre(P_X)$.
\end{proposition}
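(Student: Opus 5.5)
The strategy is to go from sheaves on $A(P_X)$ to sheaves on the base $\cB_{P_X}$, and then to presheaves on the poset. First fix the setting. If $X$ is a finite topological space, $P_X$ means the preorder $P(X)$, and $X=A(P(X))$ by Notation \ref{notation:alexandrovtop}. If $X$ is a semisimplicial set, $P_X$ is the poset of Observation \ref{digraph-poset}. In either case $P_X$ is a preorder, possibly infinite, and $A(P_X)$ has the base $\cB_{P_X}=\{U_p\}_{p\in P_X}$. The first step invokes the standard equivalence (\cite{vakil} 2.5.1) between sheaves on a topological space and sheaves on a base of it: restriction to the base is an equivalence, with quasi-inverse $\cF(U)=\varprojlim_{U_p\subseteq U}\cF(U_p)$.

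The second step, which I expect to carry the real content, is to check that every presheaf on the base $\cB_{P_X}$ automatically satisfies the sheaf axioms on the base. Take $U_p$ and a cover of it by basic opens $U_{q_i}\subseteq U_p$. Since $p\in U_p$, some $U_{q_i}$ contains $p$. Then $q_i\le p$, and hence $U_{q_i}\supseteq U_p$, so $U_{q_i}=U_p$. A cover with this property is trivial for the sheaf condition. Identity: a section vanishing on every member of the cover vanishes on $U_{q_i}=U_p$. Gluing: given compatible sections $s_j$, take $s=s_{i}$. To see that $s|_{U_{q_j}}=s_j$ for every $j$, note that $U_{q_j}\subseteq U_p=U_{q_i}$, so the intersection $U_{q_i}\cap U_{q_j}=U_{q_j}$ is itself basic. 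Compatibility of the family, stated on basic opens contained in intersections, then gives exactly $s_i|_{U_{q_j}}=s_j$. This is the point where Remark \ref{rmk:irred}, saying that the $U_p$ are irreducible (each is the smallest open containing $p$), is really used.

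The third step identifies the full subcategory of $\mathrm{Open}(A(P_X))$ on the objects of $\cB_{P_X}$ with $P_X$. Since $U_q\subseteq U_p$ if and only if $p\le q$, the assignment $p\mapsto U_p$ is order-reversing and fully faithful. It is injective on objects when $P_X$ is a poset: for semisimplicial sets, and for $T_0$ finite spaces by Theorem \ref{Thm:alextoppos}. In the general preorder case it is still an equivalence of categories, because points with $U_p=U_q$ become isomorphic objects. Presheaves on the base are therefore functors $P_X\to\Sets$, or to whatever target category is being used.

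The one point needing care is the variance convention. With inclusions as arrows, presheaves on the base are contravariant in $U$, which makes them covariant on $P_X$. One then reads $\Pre(P_X)$ with the convention of Remark \ref{cat-poset}, or equivalently orients the category $P_X$ by reverse inclusion. Composing the three equivalences gives the proposition.
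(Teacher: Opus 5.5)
Your proposal is correct and follows the paper's own argument in Observation~\ref{obs:gross}. The steps are the same: pass to sheaves on the base $\cB_{P_X}$ via \cite{vakil} 2.5.1; observe that presheaves on $\cB_{P_X}$ are automatically sheaves, since each $U_p$ is the smallest open set containing $p$; and identify the base category with $P_X$. Your explicit check of the sheaf axioms, your treatment of the non-$T_0$ preorder case, and your remark on variance fill in details the paper leaves implicit; on variance, sheaves correspond to covariant functors on $P_X$ with $x\to y$ for $x\le y$, which is how the paper actually uses the result.
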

The previous proposition holds not only for the case of sheaves of sets but also for sheaves taking values in other categories, such as abelian categories, etc.

\begin{example}\label{graph-alex}
Let $G=(V_G,E_G)$ be in $\Graphs$. The topology of the space $A(\cP_G)$ is then generated by the following base of open sets:
\begin{itemize}
    \item $U_v=\lbrace e\in E_G\: |\: v\leq e\rbrace\cup\{v\}$, that is the open star of $v$, for each vertex $v\in V$,
\item $U_e=\lbrace e\rbrace$, i.e. the edge $e$, without its vertices, for each $e\in E$.
\end{itemize}
\end{example}
From now until the end of the paper we shall make the blanket assumption that all the semisimplicial sets we consider are finite unless stated otherwise.

\section{Finite ringed spaces, gluing data and semisimplicial sets}\label{fin-sec}
Gluing data as in \cite{GW}, \cite{SP} allow us to define objects like manifolds, schemes or morphisms between them by ``gluing'' some local data satisfying certain compatibility conditions. The aim of this section is to study this notion, via the semisimplicial set language of the previous section, in a way that can be useful for applications.

\subsection{Gluing data}\label{GDrngspcs}
We recall the definition of gluing datum of ringed spaces, (see 3.5 of \cite{GW}, or 6.33 of \cite{SP}).

A ringed space is a pair $(X,\cO_X)$ consisting of a topological space $X$ and of a sheaf of rings $\cO_X$ on it; a gluing datum recovers a ringed space through a collection of ringed spaces and compatibility conditions. 

\begin{definition}\label{gluing-datum}
A \emph{gluing datum of ringed spaces} consists of:
\begin{itemize}
\item[$\bullet$] a collection of ringed spaces $(U_i,\cO_{U_i})_{i\in I}$;
\item[$\bullet$] a collection of open subspaces
$( U_{ij}\subseteq U_i)_{i,j\in I}$, $U_{ii}=U_i$, for all $i\in I$.

\item[$\bullet$] a collection of isomorphisms of ringed spaces $( \phi_{ji}:U_{ij}\rightarrow U_{ji})_{i,j\in I}$ such that: 
$$\phi_{ki}=\phi_{kj}\circ \phi_{ji}\quad \mathrm{on}\quad U_{ij}\cap U_{ik}, \qquad \hbox{(cocycle condition)}$$
\end{itemize}
We will denote such datum as $((U_i)_{i\in I},( U_{ij})_{i,j\in I},(\phi_{ij})_{i,j\in I})$.
\end{definition}

We may replace the sheaf of rings with a sheaf of abelian groups, sets, vector spaces etc. and obtain similar notions of gluing datum, for which the constructions we give below will hold.

As customary, we may sometimes denote both the topological space and the
ringed space with the same letter, whenever there is no danger of confusion. Replacing ringed spaces and morphisms between them with topological spaces and open embeddings (resp. schemes and open embeddings and so on) gives rise to the notion of gluing datum of topological spaces (resp. schemes, etc.). To fix the ideas, we will examine the case of gluing data in the category of ringed spaces $\RngSpcs$, but many of the definitions, lemmas and propositions we will prove, hold also replacing $\RngSpcs$ with other categories (like topological spaces $\Top$, schemes $\Sch$, etc.). From now on if we have two open embeddings $f:U\hookrightarrow X$, $g:V\hookrightarrow X$ we shall use the intuitive notation $U\cap V$ instead of the more precise one $U\times_XV$.

\begin{definition}\label{gluing-rs}
A ringed spaced $X$ is obtained by \textit{gluing} a gluing datum
$((U_i)_{i\in I},$ $( U_{ij})_{i,j\in I},$ $(\phi_{ij})_{i,j\in I})$, if there exists an open cover of $X$ of ringed subspaces $V_i$, such that $V_i \cong U_i$ and such isomorphisms restrict suitably to give $V_i \cap V_j \cong U_{ij}$ and the cocycle conditions.
\end{definition}

Given a gluing datum, we can always find a ringed space obtained by gluing it (see \cite{GW} or \cite{SP} for example). The gluing construction essentially amounts to taking a colimit i.e. direct limit of a certain diagram in the category of ringed spaces: we shall now explore this statement, so that we fit it into our discussion. This is a generally known fact, however we prefer to recast it so that it becomes useful in the study of finite ringed spaces (as in \cite{salas}).

We begin with the notion of {\it gluing cube} and we consider, for the moment, just the problem of gluing three ringed spaces.

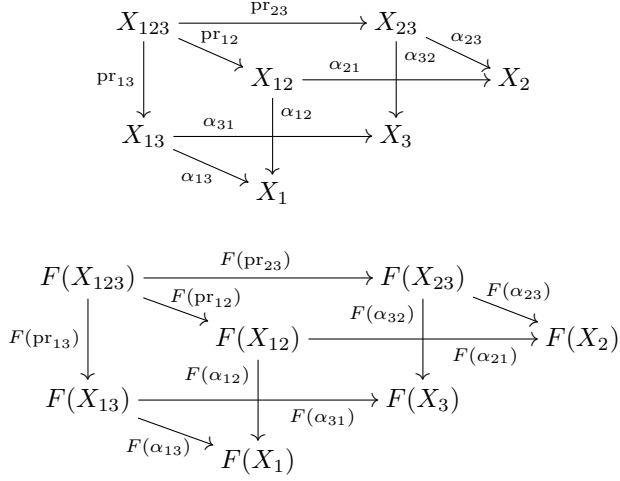
\begin{figure}\label{cubecat-fig}
\begin{displaymath}
\begin{array}{c}
\xymatrix@R0.25cm{
X_{123} \ar[dd]_{\pr_{13}} \ar[rr]^{\pr_{23}} \ar[dr]^{\pr_{12}} & & X_{23} \ar[dd]^(.3){\alpha_{32}} \ar[dr]^{\alpha_{23}} & \\
& X_{12} \ar[dd]^(.3){\alpha_{12}} \ar[rr]^(.3){\alpha_{21}} & & X_2 \\
X_{13}  \ar[dr]_{\alpha_{13}} \ar[rr]^(.3){\alpha_{31}} & &X_3 &\\
& X_1 & &  } 
\\ \\
\xymatrix@R0.25cm{
F(X_{123}) \ar[dd]_{F(\pr_{13})} \ar[rr]^{F(\pr_{23})} \ar[dr]^(.6){F(\pr_{12})} & &
F(X_{23}) \ar[dd]_(.3){F(\alpha_{32})}
\ar[dr]^{F(\alpha_{23})} & \\
& F(X_{12}) \ar[dd]_(.3){F(\alpha_{12})} \ar[rr]_(.7){F(\alpha_{21})} & & F(X_2) \\
F(X_{13})  \ar[dr]_{F(\alpha_{13})} \ar[rr]_(.7){F(\alpha_{31})} & &F(X_3) &\\
& F(X_1) & &  }
\end{array}
\end{displaymath}
\caption{The category $\cube$ and a gluing cube}
\end{figure}

\begin{definition}\label{glcube}
We define the category $\cube$, as the index category consisting of $7$ objects $X_1$, $X_2$, $X_3$, $X_{12}$, $X_{13}$, $X_{23}$, $X_{123}$
and arrows as in Fig. \ref{cubecat-fig}, identity arrows not depicted. 
A functor $F:\cube\rightarrow\RngSpcs$ is called \textit{gluing cube}, if all the morphisms in $\cube$ are sent to open embeddings and the 3 commutative squares of diagram in Fig. \ref{cubecat-fig} are sent to cartesian squares.

\end{definition}
Given a gluing cube $F$, we can glue the ringed spaces $F(X_1)$, $F(X_2)$ and $F(X_3)$ along the ``intersections" (fibered products) $F(X_{ij})$ in such a way that a cocycle condition is satisfied, obtaining a ringed space $Y$, that visually occupies the ``missing vertex'' in the gluing cube
diagram \ref{cubecat-fig}. This process amounts to take the colimit of the gluing cube $F$. The gluing construction yields natural open embeddings $F(X_i) \hookrightarrow Y$ (see \cite{SP} 6.33).
The next observation clarifies these statements.

\begin{observation}\label{gluingcubelemma}
A gluing cube $F:\cube\rightarrow\RngSpcs$ defines a gluing datum $\mathrm{GD}(F)$ \break $=$ $((U_i),( U_{ij})$, $(\phi_{ij}))_{i,j\in\lbrace 1,2,3\rbrace}$ and viceversa.
To construct a gluing datum from $F$, we set $U_i=U_{ii}:=F(X_i)$ for $i=1,2,3$. We then define the $U_{ij}$ as follows. We know that the map $F(\alpha_{ij}):F(X_{ij})\rightarrow F(X_i)$ is an open embedding, so it factors through an open embedding $e_{ij}:(U_{ij}, \cO_{F(X_i)|U_{ij}})\rightarrow F(X_i)$ where $U_{ij}$ is an open subset of $F(X_i)$. This defines the desired $U_{ij}$. In addition, we have isomorphisms $\psi_{ij}:F(X_{ij})\xrightarrow{\cong}U_{ij}$ such that $e_{ij}\circ\psi_{ij}=F(\alpha_{ij})$.
Note here that if $i>j$, strictly speaking, we do not have a $X_{ij}$:
in this case by $F(X_{ij})$ we mean $F(X_{ji})$.
We set $\phi_{ji}:= \psi_{ji}\circ\psi_{ij}^-1: U_{ij}\xrightarrow{\cong}U_{ji}$. One can check that this defines a gluing datum, that we denote
$\mathrm{GD}(F):=((U_i),( U_{ij}), (\phi_{ij}))_{i,j\in\lbrace 1,2,3\rbrace}$.

Conversely, if we start from a gluing datum involving only three spaces, $W=((U_i),( U_{ij}),(\phi_{ij}))_{i,j\in\lbrace 1,2,3\rbrace}$, we
can define a gluing cube $F_W$ as follows. We set $F_W(X_i)=U_i$,  $F_W(X_{ij}):=U_{ij}$
and $F_W(\alpha_{ij})$ to be the inclusion $U_{ij}\subseteq U_i$ for $i,j=1,2,3$, $i< j$.
If $i>j$ we define $F_W(\alpha_{ij})$ to be the composition of $\phi_{ji}$ and the canonical inclusion
$U_{ij}\subseteq U_i$. Finally, we define $F_W(X_{123})$ and $F_W(\pr_{ij})$ by
taking the pullbacks of the maps $F_W(\alpha_{ij})$ in $\RngSpcs$ and using the cocycle conditions.

If the gluing datum $W$ comes from a gluing cube $F$, one can then easily check that this procedure gives back the gluing cube $F$, that is $F_{\mathrm{GD}(F)}\cong F$. Conversely, starting from a gluing datum $W$ {as above}, if we construct a gluing cube and then a gluing datum from it, we obtain $W$ back.
\end{observation}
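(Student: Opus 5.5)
The plan is to verify the two directions of Observation \ref{gluingcubelemma} separately and then check that the round trips are the identity up to isomorphism. I will use two facts about $\RngSpcs$ throughout. First, an open embedding $f:V\to U$ factors uniquely as an isomorphism onto the open subspace $(f(V),\cO_U|_{f(V)})$ followed by the inclusion. Second, the fibre product of two open embeddings $V\hookrightarrow U \hookleftarrow W$ exists and is the open subspace $V\cap W$ with the restricted sheaf, and both projections are open embeddings. By the first fact, each $F(\alpha_{ij})$ yields $U_{ij}$, $e_{ij}$ and $\psi_{ij}$ canonically.

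\emph{From a gluing cube to a gluing datum.} The conditions $U_{ii}=U_i$ and the fact that the $\phi_{ji}=\psi_{ji}\circ\psi_{ij}^{-1}$ are isomorphisms are immediate; note also $\phi_{ij}=\phi_{ji}^{-1}$. The real content is the cocycle condition.

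1. Use that the face of the cube at $X_i$ is cartesian. This identifies $F(X_{123})$, via $F(\alpha_{ij})\circ F(\pr_{ij})$, with $U_{ij}\cap U_{ik}\subseteq U_i$.
2. Commutativity of the cube says the three composites $F(X_{123})\to F(X_i)$ through the two adjacent edges agree. It follows that $\phi_{ji}$ maps $U_{ij}\cap U_{ik}$ onto $U_{ji}\cap U_{jk}$.
3. Write $V_i\subseteq U_i$ for the image of $F(X_{123})$. There are three isomorphisms $\theta_i:F(X_{123})\to V_i$, and on these loci $\phi_{ji}$ restricts to $\theta_j\circ\theta_i^{-1}$.
4. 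The cocycle identity $\phi_{ki}=\phi_{kj}\circ\phi_{ji}$ on $U_{ij}\cap U_{ik}$ then reduces to $\theta_k\theta_i^{-1}=\theta_k\theta_j^{-1}\theta_j\theta_i^{-1}$, which is tautological.

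This bookkeeping, especially the convention $F(X_{ij}):=F(X_{ji})$ for $i>j$, is where care is needed.

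\emph{From a gluing datum to a gluing cube.} Given $W$, define $F_W$ on the vertices $X_i$, $X_{ij}$ and on the $\alpha$'s as stated. The maps are open embeddings because the $\phi$'s are isomorphisms.

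1. Set $F_W(X_{123}):=U_{12}\cap U_{13}\subseteq U_1$, and let $\pr_{12},\pr_{13}$ be the inclusions. Define $\pr_{23}$ as $\phi_{21}$ restricted to this open set, which lands in $U_{23}$ by the cocycle condition. (Here the vertex $X_{23}$ is realised as $U_{23}\subseteq U_2$.)
2. Commutativity of the three faces follows from the cocycle relations $\phi_{31}=\phi_{32}\phi_{21}$ and $\phi_{ij}=\phi_{ji}^{-1}$.
3. Cartesianness of each face: the face at $X_1$ is an intersection of opens, hence cartesian. The faces at $X_2$ and $X_3$ are transported to it by the isomorphisms $\phi_{21}$ and $\phi_{31}$. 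These carry $U_{12}\cap U_{13}$ onto $U_{21}\cap U_{23}$ and $U_{31}\cap U_{32}$ respectively, again by the cocycle condition.

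The main obstacle I expect is this transport step. Cartesianness of a square whose legs involve $\phi$'s has to be checked by identifying the images precisely, and that is where the cocycle condition is genuinely used.

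\emph{Round trips.}

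1. Starting from $W$, the construction gives back the $U_{ij}$ literally. The $\psi$'s become the identity or $\phi_{ji}$, so the recovered transition maps are the $\phi_{ji}$. Hence $\mathrm{GD}(F_W)=W$.
2. Starting from $F$, the isomorphisms $\psi_{ij}$ give isomorphisms $F_{\mathrm{GD}(F)}(X_{ij})\cong F(X_{ij})$, and these are compatible with the $\alpha$'s by construction.
3. On $X_{123}$, both sides are fibre products of the same diagrams up to these isomorphisms (Step 1 of the first part). The universal property therefore gives a unique compatible isomorphism.

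Together these yield a natural isomorphism $F_{\mathrm{GD}(F)}\cong F$.
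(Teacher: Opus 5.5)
Your proposal is correct and follows the same construction as the paper: you factor each $F(\alpha_{ij})$ through its image, set $\phi_{ji}=\psi_{ji}\circ\psi_{ij}^{-1}$, and realise $F_W(X_{123})$ as the pullback at $X_1$ with the remaining projection supplied by the cocycle condition, carrying out the checks the paper leaves to the reader. Two small points are left implicit: $\phi_{ii}$ must be set to the identity in $\mathrm{GD}(F)$ (there is no $X_{ii}$), and the isomorphism on $X_{123}$ must also be compatible with $\pr_{23}$, which follows from commutativity of the face at $X_2$ because open embeddings are monomorphisms.
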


\subsection{Gluing data for 2-dimensional semisimplicial sets}
We now consider gluing data involving more than 3 spaces.
Recall that $\Delta^2$ is isomorphic to the following semisimplicial set, that we shall call $T$:
\begin{displaymath}
    \xymatrix{
T_2= \lbrace A \rbrace \ar@<3ex>[r]^{\delta_0\qquad}
\ar[r]^{\delta_1\qquad} \ar@<-3ex> [r]^{\delta_2\qquad} &
T_1=\lbrace e_{12}, e_{23}, e_{31}\rbrace \ar@<1ex>[r]^{\delta_0'} \ar@<-1ex> [r]_{\delta_1'} &
T_0=\lbrace v_1, v_2, v_3 \rbrace
}\end{displaymath}
As usual $T_i$ denotes $T([i])$ and $\delta_i$, $\delta_i'$
are the usual face maps satisfying the face identities \cite{GJ1999}. 
The semisimplicial set $T$ can be represented as the following diagram:
\begin{displaymath}\label{cubecat}
\xymatrix{
  & \underset{\begin{array}{c} v_3 \\ \\ A\end{array} }{\bullet} & \\
 v_1\bullet \ar[rr]_{e_{12}} \ar[ur]^{e_{13}} & & \bullet v_2 \ar[ul]_{e_{23}}
  }
\end{displaymath}

We have that the opposite category of the category of simplices $\Gamma(T)$, whose objects are the elements in $T_0$, $T_1$, $T_2$ (see Definition \ref{catofsimp}) is isomorphic to $\cube$ and so is the opposite of the category arising by the poset $P_T$ (see Obs. \ref{digraph-poset}). As a consequence, a gluing cube can be seen as a functor $(\Gamma(T))^{op}\cong (P_T)^{op}\rightarrow \RngSpcs $.

\medskip\begin{definition}
    $X\in\ssSets$ is called \textit{regular} if for every $n$-simplex $a\in X_n$, the smallest sub-semisimplicial set containing $a$ is isomorphic to $\Delta^n$ \cite{piccinini}.
\end{definition}

Let $G$ be {a finite graph in} $\diGraphss$ or $\stGraphs$ together with a total ordering of its vertices. Recall that the notion of $k$-cliques of $G$ is defined in both these categories: a $k$-clique is a subset of $k$ vertices such that each two distinct vertices are \textit{adjacent},
i.e. linked by an edge if $G$ is undirected or linked by an edge in each direction if $G$ is directed.

\begin{definition}\label{def:cliquesimp}
Let $G$ be either an object of $\diGraphss$ or $\stGraphs$. Given a total ordering on its vertices, we define the semisimplicial set $G_\bullet$ as the regular semisimplicial set of dimension 2 consisting of $1$, $2$ and $3$-cliques. Explicitly:
\begin{itemize}
\item $G_0$, $G_1$, $G_2$ are the sets of $1$-cliques (vertices), $2$-cliques
and $3$-cliques respectively.
\item Face maps are defined by ordering the complex of $1$, $2$ and $3$-cliques using the given total ordering of the vertices of $G$.
\end{itemize}
\end{definition}

Different choices of the total ordering on the vertices give rise to isomorphic $G_\bullet$. This follows from the fact that all total orderings on finite sets are isomorphic.

\begin{observation} \label{obs-grosscover}
The simplices of a semisimplicial set $G_\bullet$  are in bijection with sub-semisimplicial sets isomorphic to
$T:= \xymatrix{
  T_2\ar@<1ex>[r]\ar[r]\ar@<-1ex>[r] & T_1
 } \rightrightarrows T_0$,
$E:= \bullet\rightrightarrows \bullet$ and $P:=\bullet$ and are
represented respectively by the diagrams:
$$
\xymatrix{
& \underset{\begin{array}{c} v_3 \\ \\ A\end{array} }{\bullet} & \\
v_1\bullet \ar[rr]_{e_{12}} \ar[ur]^{e_{13}} & & \bullet v_2
\ar[ul]_{e_{23}}
} \qquad
v_i\, \bullet \stackrel{e_{ij}}{\longrightarrow} \bullet \, v_j \qquad  \quad \bullet \, v_i
$$
The opposite categories of their categories of simplices $\Gamma(T)$, $\Gamma(E)$ and $\Gamma(P)$
are isomorphic respectively to the categories $\cube$, $\Lambda$ and $\ast$, defined by the diagrams below:
$$
\xymatrix@R0.25cm{
A \ar[dd]_{} \ar[rr]^{} \ar[dr]^{} & & e_{23} \ar[dd]_{} \ar[dr]^{} & \\
& e_{12} \ar[dd]_{} \ar[rr]^{} & & v_2 \\
e_{13}  \ar[dr]_{} \ar[rr]^{} & &v_3 &\\
& v_1 & &  }\qquad
\quad v_i\, \bullet\xleftarrow{\alpha_i}\stackrel{e_{ij}}{\bullet}
\xrightarrow{\alpha_j} \bullet \, v_j \qquad \quad \bullet \, v_i
$$
As usual, the identity morphisms are not depicted in these diagrams.
\end{observation}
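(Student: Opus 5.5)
The plan is to prove the two assertions of Observation \ref{obs-grosscover} separately. First, identify $T$, $E$ and $P$ with the representable semisimplicial sets $\Delta^2_+$, $\Delta^1_+$ and $\Delta^0_+$. Then read off the categories of simplices from the combinatorics of $\Delta_+$.

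For the bijection, I will use that $G_\bullet$ is regular (Definition \ref{def:cliquesimp}). To an $n$-simplex $a\in (G_\bullet)_n$, with $n\le 2$, assign the smallest sub-semisimplicial set $\langle a\rangle$ containing it. By regularity, $\langle a\rangle\cong\Delta^n_+$, which is $P$, $E$ or $T$ for $n=0,1,2$.

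For injectivity, note that $\Delta^n_+$ has exactly one simplex in dimension $n$. So $\langle a\rangle$ has $a$ as its unique top-dimensional simplex, and $\langle a\rangle=\langle b\rangle$ forces $a=b$. One can also see this directly: a simplex of $G_\bullet$ is a clique, and $\langle a\rangle$ consists of all subcliques of $a$ with faces ordered by the fixed total order.

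For surjectivity, let $Y\subseteq G_\bullet$ be a sub-semisimplicial set with $Y\cong\Delta^n_+$, and let $a$ be its unique $n$-simplex. Then $\langle a\rangle\subseteq Y$. Both are isomorphic to $\Delta^n_+$, so they have the same finite number of simplices in each degree, and the inclusion is an equality. Hence $Y=\langle a\rangle$.

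For the second assertion, I will describe $\Gamma(\Delta^n_+)$ explicitly. By Yoneda, its objects are injective order-preserving maps $[k]\to[n]$, which correspond to nonempty subsets $S\subseteq[n]$. A morphism $S\to S'$ as in (\ref{arr-sim-eq}) is a map $[k]\to[k']$ in $\Delta_+$ compatible with the two inclusions. Such a map exists iff $S\subseteq S'$, and it is then unique, because an injective order-preserving map is determined by its image. This uniqueness is the step that needs care, and it is what makes $\Gamma(\Delta^n_+)$ the poset of nonempty subsets of $[n]$ (cf. Remark \ref{cat-poset}).

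It remains to pass to opposites and compare with the diagrams:
\begin{itemize}
\item For $n=0$ we get a single object, i.e. $\ast$.
\item For $n=1$ we get $\{0\},\{1\}\subseteq\{0,1\}$. Its opposite is the span $v_i\leftarrow e_{ij}\rightarrow v_j$, i.e. $\Lambda$.
\item For $n=2$, the opposite poset has the top element $A=\{0,1,2\}$ mapping to the three two-element subsets $e_{ij}$, each of which maps to its two vertices. Matching $X_{123}\leftrightarrow A$, $X_{ij}\leftrightarrow e_{ij}$, $X_i\leftrightarrow v_i$, the arrows $\pr_{ij}$ and $\alpha_{ij}$ of Fig.~\ref{cubecat-fig} correspond exactly to the reversed inclusions. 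Commutativity of the three squares is automatic in a poset, so this gives an isomorphism with $\cube$.
\end{itemize}
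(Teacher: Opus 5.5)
Your proposal is correct and follows essentially the reasoning the paper leaves implicit. The paper states this observation without proof, resting the bijection on the regularity built into Definition~\ref{def:cliquesimp} and the isomorphisms $\Gamma(T)^{op}\cong\cube$, $\Gamma(E)^{op}\cong\Lambda$, $\Gamma(P)^{op}\cong\ast$ on direct inspection of the face poset of $\Delta^n_+$; your argument supplies the omitted details, namely the unique-top-simplex injectivity, the counting argument for surjectivity, and the uniqueness of arrows in $\Gamma(\Delta^n_+)$.
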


\begin{definition}
    We define a \emph{gluing wedge} and a \emph{gluing point} to be functors $\Lambda\rightarrow\RngSpcs$ and $\ast\rightarrow\RngSpcs$ respectively, such that all the morphisms in $\Lambda$ and $\ast$ are sent to open embeddings.
\end{definition}

\begin{remark}\label{gluingwedgespointsrmk}
Gluing wedges and gluing points give rise to gluing data $((U_i),( U_{ij}),$
$(\phi_{ij}))_{i,j\in\lbrace 1,2\rbrace}$ and $((U_1),$  $( U_{11}),$ $(\phi_{11}))$ respectively.
\end{remark}

We extend this remark to obtain a result {in Prop \ref{prop:gluingdatsimp}, key} for our subsequent treatment, which generalizes Obs. \ref{gluingcubelemma}, leaving all the details to the reader.

\begin{definition}\label{def:gluingfunctor}  
Given a graph $G$ we say that a functor $F:\Gamma(G_\bullet)^{op}\rightarrow\RngSpcs$ is a
\textit{gluing functor for $G$} if:
\begin{itemize}
\item[1.] For all $T,E,P\subseteq G_\bullet$, $F_{|\Gamma(T)^{op}}$, $F_{|\Gamma(E)^{op}}$, $F_{|\Gamma(P)^{op}}$
are respectively a gluing cube, wedge or point, where
$$\displaylines{
F_{|\Gamma(T)^{op}}:\Gamma(T)^{op}\rightarrow\RngSpcs,
\qquad F_{|\Gamma(E)^{op}}:\Gamma(E)^{op}\rightarrow\RngSpcs, \cr
F_{|\Gamma(P)^{op}}:\Gamma(P)^{op}\rightarrow\RngSpcs
}$$

\item[2.] For all pairs of gluing wedges $F(a)\leftarrow F(b)\rightarrow F(c)\leftarrow F(d)\rightarrow F(e)$, if $a$ and $e$ are not linked by an edge, the pullback of ringed spaces $F(b)\times_{F(c)}F(d)$ is the empty ringed space.
\end{itemize}
\end{definition}

Let $G$ be a finite
graph belonging to either the category $\diGraphss$ or $\stGraphs$,
together with a total ordering of its vertices and let $\Gamma(G_\bullet)$
be the category of simplices of $G_\bullet$. 

\begin{proposition} \label{prop:gluingdatsimp}

{Let $F$ be a gluing functor for a finite graph $G$ as above.}
$F$ defines a gluing datum $\mathrm{GD}(F)=((U_i),( U_{ij}),(\phi_{ij}))_{i,j\in\lbrace 1,...,n\rbrace}$
and $\colim F$ is isomorphic to the ringed space obtained by gluing $\mathrm{GD}(F)$.

Conversely, given a gluing datum $W=((U_i),( U_{ij}),(\phi_{ij}))_{i,j\in\lbrace 1,...,n\rbrace}$,
we can define a 2-dimensional semisimplicial set $G_\bullet$ whose 1-skeleton
is isomorphic to a graph $G\in\diGraphs$ having at most one edge joining each pair of vertices and a
functor $F_W:\Gamma(G_\bullet)^{op}\rightarrow \RngSpcs$,
such that $\colim F_W$ is isomorphic to the ringed space obtained by gluing $W$.
\end{proposition}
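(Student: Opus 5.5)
The plan is to reduce everything to Observation \ref{gluingcubelemma} and Remark \ref{gluingwedgespointsrmk}, applied locally, and then check that the local pieces assemble. Let $v_1<\dots<v_n$ be the vertices of $G$.

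\emph{Gluing datum from $F$.} Set $U_i:=F(v_i)$. If $v_i,v_j$ are joined by an edge $e$, the restriction of $F$ to $\Gamma(E)^{op}$ is a gluing wedge, so Remark \ref{gluingwedgespointsrmk} gives open subsets $U_{ij}\subseteq U_i$, $U_{ji}\subseteq U_j$ and an isomorphism $\phi_{ji}:U_{ij}\to U_{ji}$, exactly as in Observation \ref{gluingcubelemma} (factor $F(\alpha)$ through its image). If $v_i,v_j$ are not adjacent, put $U_{ij}=\emptyset$ and let $\phi_{ji}$ be the empty map. Finally $U_{ii}=U_i$ and $\phi_{ii}=\mathrm{id}$. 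For the cocycle condition on $U_{ij}\cap U_{ik}$ there are two cases.
\begin{itemize}
\item[(a)] If $v_i,v_j,v_k$ form a $3$-clique, the restriction to the corresponding $\Gamma(T)^{op}$ is a gluing cube. Observation \ref{gluingcubelemma} then gives the cocycle condition, since the cartesian squares identify $U_{ij}\cap U_{ik}$ with the image of $F(A)$.
\item[(b)] Otherwise some pair, say $v_j,v_k$, is not adjacent. Condition 2 of Definition \ref{def:gluingfunctor} makes the pullback $F(e_{ij})\times_{F(v_i)}F(e_{ik})$ empty, so $U_{ij}\cap U_{ik}=\emptyset$ and the condition is vacuous. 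If instead $v_i$ is not adjacent to $v_j$, then $U_{ij}=\emptyset$ already.
\end{itemize}

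\emph{Colimit.} Let $Y$ be the glued space, with open embeddings $\iota_i:U_i\hookrightarrow Y$ whose images $V_i$ satisfy $V_i\cap V_j\cong U_{ij}$. Define a cocone on $F$ by sending $F(v_i)\mapsto\iota_i$, $F(e_{ij})\mapsto \iota_i\circ F(\alpha)$ and $F(A)\mapsto$ the composite through any face; compatibility is exactly the definition of $\phi_{ji}$ together with the cocycle condition. To prove universality, take a cocone $(g_\sigma)$ to a ringed space $Z$. Maps $g_i:U_i\to Z$ that agree on the overlaps (through $\phi_{ji}$) glue uniquely to $Y\to Z$. This is the standard universal property of gluing \cite{SP}: first build the continuous map by gluing on the open cover $\{V_i\}$, then glue the sheaf maps $\cO_Z\to g_*\cO_Y$ using the sheaf property on that cover. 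Agreement on overlaps follows because the cocone commutes over each edge object $F(e_{ij})$. The $2$-simplex components impose no new condition, since they factor through edges. Hence $\colim F\cong Y$.

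\emph{Converse.} Given $W$ with index set $\{1,\dots,n\}$, let $G$ be the graph on vertices $1,\dots,n$ with an edge $i\to j$ for $i<j$ exactly when $U_{ij}\neq\emptyset$, and take $G_\bullet$ as in Definition \ref{def:cliquesimp}. Define $F_W$ on each $3$-clique by the cube $F_W$ of Observation \ref{gluingcubelemma}, and on edges and vertices by the restrictions. These definitions agree on shared faces because the pullbacks are canonical (fix one choice $U_{ij}\cap U_{ik}\subseteq U_i$). Condition 2 holds: if $a,e$ are not adjacent, then $U_{ac}\cap U_{ce}\subseteq U_c$ is empty, since by the cocycle condition $\phi_{ac}$ maps it into $U_{ae}=\emptyset$. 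Then $\mathrm{GD}(F_W)=W$, and the first part gives $\colim F_W\cong$ the gluing of $W$.

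The main obstacle is the universality step in the colimit argument, because colimits in $\RngSpcs$ must be handled on both the topological and the sheaf level. I would cite the known gluing lemma rather than redo it. The consistency of choices in the converse is routine.
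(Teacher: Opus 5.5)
Your proof is correct and takes the route the paper intends: the paper gives no argument beyond saying the result generalizes Observation \ref{gluingcubelemma} and leaving the details to the reader, and you supply them by building $\mathrm{GD}(F)$ locally from the cubes and wedges, using condition 2 of Definition \ref{def:gluingfunctor} to make $U_{ij}\cap U_{ik}$ empty when $v_j,v_k$ are non-adjacent, and checking the universal property on the open cover $\{V_i\}$. Two cosmetic points: in the converse the relevant set is $U_{ca}\cap U_{ce}\subseteq U_c$ (which $\phi_{ac}$ sends into $U_{ac}\cap U_{ae}=\emptyset$), and you should take $G$ undirected as in Definition \ref{def:grfin}, because under the paper's convention a directed $2$-clique needs edges in both directions.
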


\begin{remark}\label{rmk:disjoint}
    Consider a gluing functor $F$ for a finite graph $G$. If two vertices $u$, $v$ of $G$ are not linked by any edge, then the ringed spaces $F(a)$ and $F(b)$ are disjoint open sub ringed spaces of $\colim F$. 
\end{remark}

\begin{observation}
    In the previous proposition, the choice of the total ordering on the vertices of $G$ is 
    immaterial: two distinct total orderings on $G$ give rise to an automorphism of $G_\bullet$ inducing on the colimits of the two resulting gluing functors an isomorphism.
\end{observation}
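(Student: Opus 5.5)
The approach is to reduce the statement to a comparison of index categories. Once the two categories of simplices are identified by an isomorphism that respects the gluing structure, the statement follows from the fact that colimits are invariant under reindexing along an isomorphism of diagram categories.

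Fix two total orderings $<$ and $<'$ on the vertex set of $G$, and write $G_\bullet$ and $G'_\bullet$ for the semisimplicial sets of Definition \ref{def:cliquesimp} built from them. In both, the set of $n$-simplices is literally the same set of $(n+1)$-cliques of $G$; only the face maps change, since $\delta_i$ deletes the $i$-th vertex with respect to the chosen order.

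First I will produce an isomorphism $\sigma\colon G_\bullet\to G'_\bullet$. Let $s$ be the unique order isomorphism $(V,<)\to(V,<')$ of finite totally ordered sets; it is a permutation of $V$. When $s$ is a graph automorphism of $G$, sending a clique $c$ to $s(c)$ commutes with the face maps, because $s$ carries the $i$-th vertex of $c$ for $<$ to the $i$-th vertex of $s(c)$ for $<'$. This is the automorphism in the statement.

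In general I will avoid needing $s$ to be a graph map. Since $G_\bullet$ and $G'_\bullet$ are regular, Remark \ref{cat-poset} identifies both $\Gamma(G_\bullet)$ and $\Gamma(G'_\bullet)$ with their face posets. In both cases this poset is the set of cliques of $G$ of size at most $3$, ordered by inclusion, and it does not depend on the ordering. This gives an isomorphism of categories $\theta\colon\Gamma(G'_\bullet)\to\Gamma(G_\bullet)$ that is the identity on cliques.

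Next I will check that $F\mapsto F\circ\theta^{op}$ sends gluing functors for $(G,<)$ to gluing functors for $(G,<')$. Condition 1 of Definition \ref{def:gluingfunctor} is preserved because $\theta$ maps the subcategories $\Gamma(T)$, $\Gamma(E)$, $\Gamma(P)$ attached to a clique to the corresponding subcategories for the same clique. Condition 2 is preserved because it refers only to wedges and to the adjacency of vertices, and both are intrinsic to $G$.

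The main point requiring care is the cube condition. Reordering the vertices of a triangle changes which arrows of $\cube$ carry which labels, for instance $\alpha_{ij}$ versus $\alpha_{ji}$. So I must verify that the three squares required to be cartesian are sent to the three cartesian squares in the new labelling. Since $\theta$ is an automorphism of the face poset of each triangle, it permutes these squares: they are exactly the squares having $A$ as initial object and a vertex as terminal object. Hence the condition is preserved.

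Finally, for any functor $F$ one has $\colim F\circ\theta^{op}\cong\colim F$ canonically, because precomposition with an isomorphism of index categories preserves colimits (cocones correspond bijectively). Combined with Proposition \ref{prop:gluingdatsimp}, this also shows that both colimits are the space obtained by gluing the same gluing datum, with its indices permuted by $s$.
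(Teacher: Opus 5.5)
Your argument is correct, but it takes a different route from the paper. The paper gives no separate proof of this observation. Its only justification is the remark after Definition~\ref{def:cliquesimp}, that different orderings give isomorphic $G_\bullet$ ``because all total orderings on finite sets are isomorphic''. In other words, it transports along the order isomorphism $s\colon(V,<)\to(V,<')$.

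As you noticed, that transport works only when $s$ is a graph automorphism of $G$, and it cannot be repaired at the level of semisimplicial sets. Any isomorphism of the two $G_\bullet$'s would be a graph automorphism carrying the acyclic orientation induced by $<$ to the one induced by $<'$. Take the path $a - b - c$ with $a<b<c$ and $b<'a<'c$. The two semisimplicial sets are the directed graphs $a\to b\to c$ and $a\leftarrow b\to c$, which are not isomorphic. So the literal claim of an automorphism of $G_\bullet$ is not available in general.

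Your proof avoids this in three steps:
\begin{itemize}
\item You replace the semisimplicial isomorphism by the canonical, identity-on-cliques isomorphism $\theta$ of the categories of simplices. Both categories are the inclusion poset of cliques of size at most $3$, because every simplex has distinct vertices and distinct faces.
\item You check that the gluing-functor conditions are invariant under relabelling. The key point is that every automorphism of the face poset of a triangle permutes the three squares with initial object $A$, so the cartesian condition is preserved.
\item You conclude with invariance of colimits under reindexing along an isomorphism of index categories.
\end{itemize}

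What each route buys: the paper's route, when it applies, gives a genuine map of semisimplicial sets, which is the kind of data a morphism in $\schtwo$ requires. Yours works for every pair of orderings, which is exactly what the colimit statement needs. So you prove the substantive content of the observation in a corrected form.

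Two minor points. First, since $\theta$ is the identity on cliques, $\mathrm{GD}(F)$ and $\mathrm{GD}(F\circ\theta^{op})$ are the same gluing datum indexed by $V$. The permutation $s$ enters only if you name the indices $1,\dots,n$ by position in the chosen order. Second, for the functors $F_W$, $F'_W$ built from a gluing datum $W$ using the two orderings, one has $F'_W\cong F_W\circ\theta^{op}$ only up to the natural isomorphisms $\phi_{ij}$ on edge objects. This is harmless for colimits.
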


\subsection{Gluing Ringed Spaces}\label{sect:Gluingringedspaces}
Given a finite
graph $G$, we now show how the information encoded in gluing functor 
$F:\Gamma(G_\bullet)^{op}\rightarrow \RngSpcs$ for $G$
effectively recovers a ringed space.

\medskip

We have a fully faithful inclusion functor $\Top\subseteq\RngSpcs$ 
and forgetful functors $\LRngSpcs\rightarrow\RngSpcs\rightarrow\Top$. As these forgetful functors have right adjoints, they preserve colimits. In particular, whenever we glue a gluing datum of (locally) ringed spaces, the underlying topological space is obtained as if we were gluing a gluing datum of topological spaces. The same also applies if we replace the sheaves of rings with sheaves of abelian groups, etc.

\begin{definition}\label{def:grfin}
Consider a ringed space $(S, \cO_S)$ and a finite covering by open embeddings $\cU=\lbrace U_i\hookrightarrow S\rbrace_{i=1,...,n}$ of it.
We identify each $U_i$ with the subspace of $S$ isomorphic to it.
This covering gives rise to a gluing datum $(U_i, U_{ij}:=U_i \cap U_j, \phi_{ij})$ where the $\phi_{ij}$ are isomorphisms.
We define the graph $G(\cU)\in \Graphs_{\leq  1}$ by taking as vertices the ordered set of $U_i$'s and placing an edge between two vertices $v_i$ and $v_j$ $i\neq j$ if and only if $U_{ij} \neq \emptyset$.
\end{definition}

Notice that, since vertices of $G(\cU)$ are naturally ordered we can immediately construct the
semisimplicial set $G(\cU)_\bullet$ according to Def. \ref{def:cliquesimp}. It has one 0-simplex for each $U_i$, a 1-simplex for each non empty $U_i\cap U_j$ ($i<j$) and one 2-simplex for each triple intersection $U_i\cap U_j\cap U_k$ corresponding to a $3$-clique of $G(\cU)$ (these triple intersections can be empty).
Moreover, the 2-dimensional semisimplicial set $G(\cU)_\bullet$ is regular.

\begin{remark}\label{rmk:equalsubschemes}
The identification of each element of a covering $\cU$ by open embeddings of a given ringed space $S$ with the subspace of $S$ isomorphic to it is merely a way to avoid choosing a cleavage as in \cite{Vis}. Please note that this choice does not ``remove repetitions" in the following sense. If we have two schemes in $\cU$ that embed to the same open subspace of $S$ we still have in $G(\cU)_0$ two distinct vertices corresponding to these ringed spaces.
\end{remark}

Given $S$, $\cU$ as above we can define a finite ringed space $(S_\cU, \cO_{S_\cU})$ as in \cite{salas}.

\begin{definition}\label{def:constrFinSp}
Let $(S, \cO_S)$ be a ringed space and $\cU$ a finite covering by open embeddings. 
For each point $s\in S$, we define $U^s:=\cap_{s\in U_i}U_i$. Consider the
equivalence relation:
$$
s\sim s' \quad \hbox{if and only if} \quad U^s=U^{s'}
$$
We define the finite ringed space $(S_\cU, \cO_{S_\cU})$
associated to $S$ and the cover $\cU$ as $S_\cU:=S/\sim$ and $\cO_{S_\cU}:=\pi_\ast\cO_S$,
where $\pi:S\rightarrow S_\cU$ is the continuous projection morphism.
Notice that $\pi$ {can be promoted to} a morphism of ringed spaces.
\end{definition}

\begin{observation}\label{rmk:salasXdef}
Let $\pi:S\rightarrow S_\cU$ and $\cU$
{as in Def. \ref{def:constrFinSp}.}
One can check that the image of $U_i\cap U_j=:U_{ij}$ and $U_i\cap U_j\cap U_k=:U_{ijk}$ under $\pi$ is an open subset for every $i,j,k$. By looking at $U_i$, $U_{ij}$ and $U_{ijk}$ as ringed spaces, we define $V_i:=(\pi(U_i),(\pi_{|U_i})_\ast\cO_{S|U_i})$, $V_{ij}:=(\pi(U_{ij}),(\pi_{|U_{ij}})_\ast\cO_{S|U_{ij}})$ and $V_{ijk}:=(\pi(U_{ijk}),(\pi_{|U_{ijk}})_\ast\cO_{S|U_{ijk}})$. These are open sub-ringed spaces of $S_\cU$: the $V_i$ form a cover of $S_\cU$ by open embeddings and $V_{ij}$, $V_{ijk}$ are the double and the triple intersections of them so that they naturally define a gluing datum that returns $S_\cU$ after gluing.
\end{observation}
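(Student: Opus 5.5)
The statement to be proved is Observation \ref{rmk:salasXdef}. It says that $\pi(U_i)$, $\pi(U_{ij})$, $\pi(U_{ijk})$ are open in $S_\cU$, and that the induced ringed spaces $V_i,V_{ij},V_{ijk}$ are open sub-ringed spaces forming a cover of $S_\cU$ whose intersections are the expected ones, so that the resulting gluing datum glues back to $S_\cU$.

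\textbf{Step 1: the intersections are saturated.} For $s\in S$ set $I(s):=\{i : s\in U_i\}$. Then $s\in U_{ij}$ if and only if $i,j\in I(s)$. Since $U^s=\bigcap_{i\in I(s)}U_i$, the relation $U^s=U^{s'}$ seems to require only equality of intersections, not equality of $I(s)$ and $I(s')$. But $U^s$ determines the set $\{i : U^s\subseteq U_i\}$, and this set equals $I(s)$: if $U^s\subseteq U_i$ then $s\in U^s\subseteq U_i$, and conversely. Hence $s\sim s'$ if and only if $I(s)=I(s')$. It follows that every set of the form $U_J:=\bigcap_{j\in J}U_j$ is a union of equivalence classes, i.e.\ $\pi^{-1}(\pi(U_J))=U_J$. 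Since $S_\cU$ carries the quotient topology, $\pi(U_J)$ is open.

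\textbf{Step 2: the $V$'s are open subspaces.} Let $W=U_J$ be saturated and open. Then $\pi|_W:W\to\pi(W)$ is the quotient map for the restricted relation, because saturated open subsets of $W$ are exactly the saturated opens of $S$ contained in $W$. For $V\subseteq\pi(W)$ open we get
$$\bigl((\pi|_W)_\ast\cO_{S|W}\bigr)(V)=\cO_S\bigl(\pi^{-1}(V)\bigr)=\bigl(\pi_\ast\cO_S\bigr)(V),$$
so $(\pi(W),(\pi|_W)_\ast\cO_{S|W})=(\pi(W),\cO_{S_\cU}|_{\pi(W)})$ is an open sub-ringed space. The $\pi(U_i)$ cover $S_\cU$ because $\pi$ is surjective and the $U_i$ cover $S$. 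Saturation also gives $\pi(U_i)\cap\pi(U_j)=\pi(U_i\cap U_j)$, and similarly for triple intersections. So $V_{ij}=V_i\cap V_j$ and $V_{ijk}=V_i\cap V_j\cap V_k$ as open subspaces.

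\textbf{Step 3: gluing.} Take the gluing datum $(V_i, V_{ij}, \mathrm{id})$ with identity transition maps. The cocycle condition holds trivially. Any ringed space is recovered by gluing the gluing datum of an open cover, because sheaves glue uniquely and the topology is determined by an open cover. Hence gluing this datum returns $S_\cU$, by Definition \ref{gluing-rs}. Alternatively, via Proposition \ref{prop:gluingdatsimp} this is the colimit of the gluing functor on $G(\cU)_\bullet$.

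The only real content, and the main obstacle, is the saturation argument in Step 1, which identifies $\sim$ with equality of index sets. Everything else is formal once $\pi^{-1}\pi(U_J)=U_J$ is established.
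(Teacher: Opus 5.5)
Your proof is correct, and it is exactly the verification the paper leaves to the reader with ``One can check''. The whole content is your observation that $s\sim s'$ if and only if $\{i: s\in U_i\}=\{i: s'\in U_i\}$, so that every finite intersection $U_J$ is $\pi$-saturated; Steps 2--3 then follow formally. One minor point: if $S/\!\sim$ is topologized as in \cite{salas}, by the order $[s]\le[s']$ if and only if $U^s\supseteq U^{s'}$ (which can be strictly coarser than the quotient topology you assume), your Step 1 still goes through, because $\pi(U_J)=\{[s]: I(s)\supseteq J\}$ is an up-set, and Steps 2--3 only use saturation.
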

We will denote as $\RSfin$ the category of ringed spaces having as underlying topological space a finite topological space. For a given ringed space $S$ and a given open covering $\cU$ of it as above, using Proposition \ref{prop:gluingdatsimp} and Observation \ref{rmk:salasXdef} we can define:
\begin{itemize}
\item $F_{\mathrm{can}}^{\cU}: \Gamma(G(\cU)_\bullet)^{op}\rightarrow \RngSpcs$, sending the objects of $\Gamma(G(\cU))^{op}$ to $U_i$, $U_{ij}$, $U_{ijk}$.
\item $F_{\mathrm{fin}}^{\cU}:\Gamma(G(\cU)_\bullet)^{op}\rightarrow \RSfin$, sending the objects of $\Gamma(G(\cU))^{op}$ to $V_i$, $V_{ij}$, $V_{ijk}$.
\end{itemize}

\begin{lemma}\label{lemma:Fcan}
Let the notation be as above. Then
$$
\colim F_{\mathrm{fin}}^{\cU}\cong S_\cU \qquad
\colim F_{\mathrm{can}}^{\cU}\cong S.
$$
In addition we have a natural transformation of functors $\Tilde{\pi}:F_{\mathrm{can}}^{\cU}\rightarrow F_{\mathrm{fin}}^{\cU}$ induced by $\pi$ that, under the isomorphisms $\colim F_{\mathrm{fin}}^{\cU}\cong S_\cU$ and $\colim F_{\mathrm{can}}^{\cU}\cong S$ reconstructs the morphism $\pi:S\rightarrow S_\cU$ on the colimits.
\end{lemma}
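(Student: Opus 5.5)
The plan is to reduce both colimit statements to Proposition \ref{prop:gluingdatsimp}. First I check that $F_{\mathrm{can}}^{\cU}$ and $F_{\mathrm{fin}}^{\cU}$ are gluing functors for $G(\cU)$ in the sense of Definition \ref{def:gluingfunctor}.

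For $F_{\mathrm{can}}^{\cU}$ this is essentially tautological. Every arrow of $\Gamma(G(\cU)_\bullet)^{op}$ is sent to an inclusion of open subspaces $U_{ijk}\subseteq U_{ij}\subseteq U_i$. Each square of a cube is cartesian because $U_{ij}\times_{U_i}U_{ik}=U_{ij}\cap U_{ik}=U_{ijk}$ for open subspaces. Condition 2 holds by construction of $G(\cU)$: if $v_a,v_e$ are not adjacent then $U_a\cap U_e=\emptyset$, and the relevant pullback is a subspace of it.

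For $F_{\mathrm{fin}}^{\cU}$ I use Observation \ref{rmk:salasXdef}. The $V_i$, $V_{ij}$, $V_{ijk}$ are open sub-ringed spaces of $S_\cU$, with $V_{ij}=V_i\cap V_j$ and $V_{ijk}=V_i\cap V_j\cap V_k$. The key point is that $\pi^{-1}(\pi(U_i))=U_i$. Indeed, if $s\sim s'$ and $s\in U_i$, then $U^{s'}=U^s\subseteq U_i$, so $s'\in U_i$. Hence $\pi$ commutes with intersections of the $U_i$, the open sets in question are saturated, and the restriction of $\pi_\ast\cO_S$ to $\pi(U_i)$ agrees with $(\pi_{|U_i})_\ast\cO_{S|U_i}$. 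The same checks as for $F_{\mathrm{can}}^{\cU}$ then apply, and Proposition \ref{prop:gluingdatsimp} gives that $\colim F_{\mathrm{fin}}^{\cU}$ is the gluing of the datum $(V_i,V_{ij})$. By Observation \ref{rmk:salasXdef} that gluing is $S_\cU$. Likewise $\colim F_{\mathrm{can}}^{\cU}$ is the gluing of $(U_i,U_{ij})$, which is $S$ because a ringed space is recovered by gluing an open cover of itself (uniqueness of gluing in \cite{SP}, 6.33).

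For the natural transformation, I define $\tilde\pi$ componentwise as the restrictions $\pi_{|U_i}:U_i\to V_i$, and similarly on $U_{ij}$ and $U_{ijk}$. These are morphisms of ringed spaces: the comap $\pi^\sharp$ restricted to $V_i$ is the identity $\pi_\ast\cO_S(W)=\cO_S(\pi^{-1}W)$, and $\pi^{-1}W\subseteq U_i$ by saturation. Naturality follows because all maps in both functors are inclusions and $\pi$ commutes with them.

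Finally, the colimit cocones are the inclusions $U_i\hookrightarrow S$ and $V_i\hookrightarrow S_\cU$. The induced map $\colim\tilde\pi$ is the unique map compatible with these cocones, and $\pi$ itself is such a map. By the universal property, $\colim\tilde\pi=\pi$ under the identifications.

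I expect the main obstacle to be the sheaf-level compatibility in the finite case: showing that restricting $\pi_\ast\cO_S$ to the open set $\pi(U_i)$ really is $(\pi_{|U_i})_\ast\cO_{S|U_i}$, and that the cartesian squares of ringed spaces hold at the level of structure sheaves. Saturation handles both, but this is the step that must be checked carefully.
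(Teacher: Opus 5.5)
Your proposal is correct and follows the route the paper intends. The paper's proof only says that the compatibility checks are left to the reader, relying on Proposition \ref{prop:gluingdatsimp} and Observation \ref{rmk:salasXdef}; your saturation observation $\pi^{-1}(\pi(U_i))=U_i$ supplies exactly those checks (openness and intersections of the $V$'s, agreement of the restricted structure sheaves, naturality of $\tilde\pi$, and $\colim\tilde\pi=\pi$ via the universal property).
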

\begin{proof}
We leave to the reader to check the compatibility conditions.
\end{proof}

We end this section by constructing a ringed space similar, but different
from $(S_\cU, \cO_{S_\cU})$ in Def. \ref{def:constrFinSp}, that will be important later.

\begin{observation}\label{rmk:gbullet}
If  $(S, \cO_S)$ is a ringed space and $\cU=\{U_i\}$ is a
finite covering of it by open embeddings, $\Gamma(G(\cU)_\bullet)$ is a category isomorphic to the category associated with the poset $P_{G(\cU)_\bullet}$ (see Observation \ref{digraph-poset},
Remark \ref{cat-poset}). Unraveling the definitions, we see that this poset has one element $p_i$ for each $U_i$, an element $p_{ij}$ for each $U_{ij}$ ($i< j$, $U_i\cap U_j\neq \emptyset$) and an element $p_{ijk}$ for each $U_{ijk}$ corresponding to a $3$-clique of $G(\cU)$.
We can consider the topological space $A(P_{G(\cU)_\bullet})$ associated to the poset $P_{G(\cU)_\bullet}$ using the Alexandrov topology (see \ref{notation:alexandrovtop}).
One can check, by the very definition of the Alexandrov topology,
that $P_{G(\cU)_\bullet}$, viewed as a category, is isomorphic to the dual category of irreducible open subsets of $A(P_{G(\cU)_\bullet})$.
This isomorphism associates to each $p_i$, $p_{ij}$, $p_{ijk}$ the irreducible open subsets $U_{p_i}$, $U_{p_{ij}}$ and $U_{p_{ijk}}$ of  $A(P_{G(\cU)_\bullet})$ (see Section \ref{sec-preorder} for the notation).
Hence, a functor from $P_{G(\cU)_\bullet}$ to $\Rings$ will yield a sheaf on the topological space $A(P_{G(\cU)_\bullet})$ (see Proposition \ref{prop:gross}). Hence, the functor
$$
F_{\cU}^2: P_{G(\cU)_\bullet}\cong\Gamma(G(\cU)_\bullet)\rightarrow \Rings, \quad
p_i\mapsto \cO_S(U_i),\, p_{ij}\mapsto \cO_S(U_{ij}),\, p_{ijk}\mapsto\cO_S(U_{ijk})
$$
is a sheaf of rings on the base {$\{U_{p_i}, U_{p_{ij}}, U_{p_{ijk}}\}$}
of $A(P_{G(\cU)_\bullet})$. In other words, $F_\cU^2$ is defined by composing the functor $F^\cU_{\mathrm{can}}$ with the global section functor.
\end{observation}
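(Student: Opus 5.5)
The plan is to verify the three assertions packed into the observation in turn. First, the poset structure of $P_{G(\cU)_\bullet}$. Second, the anti-isomorphism with the irreducible open sets of $A(P_{G(\cU)_\bullet})$. Third, that $F^2_\cU$ is a well-defined functor which, via Proposition \ref{prop:gross}, is a sheaf on the base.

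For the first point, I would use that $G(\cU)_\bullet$ is regular by construction. Each simplex is a clique $\{i\}$, $\{i,j\}$ or $\{i,j,k\}$ with $i<j<k$, and its faces are exactly the subcliques, ordered by the total order on the vertices. Hence between two simplices there is at most one morphism in $\Gamma(G(\cU)_\bullet)$. Such a morphism exists precisely when the clique of the first is contained in the clique of the second. So $\Gamma(G(\cU)_\bullet)$ is the poset of these cliques under inclusion, which is $P_{G(\cU)_\bullet}$ with elements $p_i,p_{ij},p_{ijk}$ as described (Observation \ref{digraph-poset}, Remark \ref{cat-poset}). Regularity makes the isomorphism canonical here, since no choices among parallel arrows arise.

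For the second point, I would use Remark \ref{rmk:irred}: the irreducible open subsets of $A(P)$ are exactly the $U_p=\{q\geq p\}$. I would then check that $p\le q$ if and only if $U_p\supseteq U_q$. One direction is transitivity; for the other, use $q\in U_q\subseteq U_p$. Antisymmetry of $P$ (Theorem \ref{Thm:alextoppos}) makes $p\mapsto U_p$ injective. Therefore $p\mapsto U_p$ is an isomorphism from $P_{G(\cU)_\bullet}$ onto the opposite of the full subcategory of $\mathrm{Open}(A(P_{G(\cU)_\bullet}))$ on $\cB_P$.

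For the third point, $F^2_\cU$ is obtained by composing $F^\cU_{\mathrm{can}}$ (covariant on $\Gamma^{op}$, sending faces to open inclusions $U_{ijk}\subseteq U_{ij}\subseteq U_i$) with the contravariant global sections functor $\cO_S(-)$. This gives a covariant functor on $\Gamma\cong P$ sending $p\le q$ to the restriction map $\cO_S(U_p)\to\cO_S(U_q)$. Functoriality is just transitivity of restriction in the sheaf $\cO_S$. One subtlety is that the indices must be used with $i<j<k$, so that the intersections $U_{ij}=U_i\cap U_j$ are literal subsets of $S$, which is why we identified each $U_i$ with its image. Composing with the anti-isomorphism from the second step, $F^2_\cU$ becomes a presheaf of rings on the base $\cB_P$.

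Finally I would invoke Observation \ref{obs:gross}. Every element of $\cB_P$ is irreducible, so any cover of a basic open $U_p$ by basic opens must contain $U_p$ itself. Hence the sheaf axioms on the base hold trivially. Proposition \ref{prop:gross}, in its version for $\Rings$-valued sheaves, then extends $F^2_\cU$ uniquely to a sheaf of rings on $A(P_{G(\cU)_\bullet})$. The only step needing care is this irreducibility/cover argument. It is the point where one might worry that a basic open is covered nontrivially, and the minimality of $U_p$ among opens containing $p$ is exactly what rules that out.
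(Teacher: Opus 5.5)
Your proposal is correct and follows the paper's own route. The paper simply asserts these facts (``unraveling the definitions'', ``by the very definition of the Alexandrov topology'') and appeals to Observation \ref{obs:gross} and Proposition \ref{prop:gross}. You supply the routine checks: regularity gives at most one arrow between simplices, $p\le q \iff U_p\supseteq U_q$, and the sheaf condition on the base holds trivially.

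One small caution: ``irreducible'' in the statement and in Remark \ref{rmk:irred} must be read as ``of the form $U_p$'', i.e.\ not a union of strictly smaller opens. In the usual topological sense, the whole space $\{p_1,p_2,p_{12}\}$ of a single edge is irreducible without being any $U_p$. Your argument only uses that $U_p$ is the minimal open neighbourhood of $p$, which is exactly the right property.
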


\begin{definition}\label{Fund::preschspcover}
Let $S$ be a ringed space and let $\cU$ be a finite covering of it by open embeddings. We define the ringed space $(S_\cU^2,\cO_{S_\cU^2})$:
\begin{itemize}
\item $S_\cU^2$ is the topological space $A(P_{G(\cU)_\bullet})$.
\item $\cO_{S_\cU^2}$ is the sheaf of rings on $S_\cU^2$ obtained from the sheaf $F_{\cU}^2$ of Observation \ref{rmk:gbullet} on a base of $A(P_{G(\cU)_\bullet})$.
\end{itemize}
We will say also in this case that $(S_\cU^2,\cO_{S_\cU^2})$ is the ringed
space associated to $S$ and the cover $\cU$.
\end{definition}

\begin{remark}
Given a ringed space $S$ and a finite covering by open embeddings $\cU$, in general the topological spaces $S_\cU^2$ and $S_\cU$ are {not} homeomorphic and consequently the two resulting ringed spaces according to Definitions \ref{def:constrFinSp} and \ref{Fund::preschspcover} are not isomorphic. Indeed, consider a covering $\cU$ consisting of $S$ itself and of a given {non empty} open subspace $U\subset S$. Then $S_\cU$ is a finite topological space consisting of two points while $S_\cU^2$ is a finite topological space consisting of 3 points.
\end{remark}

\begin{remark}
Given a covering $\cU$ of $S$ as above one can define the sub-semisimplicial set $i:\widetilde{G}(\cU)_\bullet\hookrightarrow G(\cU)_\bullet$ obtained by $G(\cU)_\bullet$ by removing in $G(\cU)_2$ the elements $p_{ijk}$ associated with triple intersections $U_{ijk}$ that are empty. From $i$ we get a map $\Gamma(i):\Gamma(\widetilde{G}(\cU)_\bullet)\rightarrow \Gamma(G(\cU)_\bullet)$ and
therefore, as $P_{G(\cU)_\bullet}\cong\Gamma(G(\cU)_\bullet)$ and $P_{\widetilde{G}(\cU)_\bullet}\cong\Gamma(\widetilde{G}(\cU)_\bullet)$, a functor:
$$
\widetilde{F}_{\cU}^2:=F_{\cU}^2\circ\Gamma(i), \quad \widetilde{F}_{\cU}^2:
P_{G(\cU)_\bullet}\cong\Gamma(\widetilde{G}(\cU)_\bullet)\rightarrow \Rings
$$
From $i$ we also get a topological space $\widetilde{S}_\cU^2$, a continuous inclusion $f:\widetilde{S}_\cU^2\hookrightarrow S_\cU^2$, a sheaf $\cO_{\widetilde{S}_\cU^2}$ on $\widetilde{S}_\cU^2$ obtained from $\widetilde{F}_{\cU}^2$ and a functor $\widetilde{F}_{\mathrm{can}}^\cU=F_{\mathrm{can}}^\cU\circ\Gamma(i)^{op}$. As for every sheaf $\cF$ on a topological space we have that $F(\emptyset)$ is equal to the zero ring, one can check that by the very definition of $\cO_{\widetilde{S}_\cU^2}$ we have an isomorphism $\varphi:\cO_{S_\cU^2}\xrightarrow{\cong}f_\ast \cO_{\widetilde{S}_\cU^2}$ giving rise to a morphism of ringed spaces $(f,\varphi):(\widetilde{S}_\cU^2,\cO_{\widetilde{S}_\cU^2})\rightarrow(S_\cU^2,\cO_{S_\cU^2})$ such that $\varphi$ is a sheaf isomorphism. This morphism of ringed space is not an isomorphism but its latter property, together with the fact that the colimits of $\widetilde{F}_{\mathrm{can}}^\cU$ and $F_{\mathrm{can}}^\cU\circ\Gamma(i)^{op}$ can be checked to be both isomorphic to $S$ might prompt us to think that $(\widetilde{S}_\cU^2,\cO_{\widetilde{S}_\cU^2})$ and $(S_\cU^2,\cO_{S_\cU^2})$ are weakly equivalent in some sense to be defined. This heuristic argument motivates part of the reasoning found in Section \ref{sect:schtwo}.
\end{remark}

\section{Finite ringed spaces, schemes and semisimplicial sets}\label{subsec:schemes}
In this section we specialize some of the previous constructions to the case of schemes and manifolds. We also introduce a convenient category to study gluing data of schemes leveraging the language of semisimplicial set that we will cal $\schtwo$.

\subsection{Finite ringed spaces and schemes} \label{subsec:schemesrs}
In this section we study more in detail the gluing constructions introduced in the previous section for the case of schemes. For the main definitions as scheme, Spec, etc. we refer the reader to \cite{hartshorne} Ch. 2. Our next definition is directly inspired by the theory of {\sl schematic spaces} (see \cite{salas} and refs therein and also Sec. \ref{sec-mflds} for the differentiable category).

Consider a finite ringed space $(X,\cO_X)$, i.e. a ringed space having as underlying topological space a finite topological space. There is a natural preorder on $X$ as explained in Section \ref{sec-preorder}, Theorem \ref{Thm:alextoppos}.

\begin{definition}\label{paraschematic}
We say that a finite ringed space $(X,\cO_X)$ is {\it paraschematic} if
for all $p\geq q$, $p,q\in X$, (recall that $p\geq q$ if and only if $U_q \subset U_p$), the scheme morphism
$\Spec(\cO_X(U_q))\rightarrow\Spec(\cO_X(U_p))$ is an open embedding.
We denote the category of paraschematic spaces as  $\mathrm{PSch}$.
\end{definition}

Recall that a scheme is called semi-separated if its diagonal morphism is an affine morphism or, equivalently, if the intersection of any two affine subschemes of it is an affine scheme. Recall also that a scheme is called quasi-compact if admits a finite cover of affine open subschemes.
Notable categories of schemes enjoy these properties, for example all quasi-projective schemes (over a regular ring, say) or more generally all divisorial schemes have this property. We denote the category of quasi-compact and semi-separated schemes with $\mathrm{Sch}$. Unless otherwise specified, we assume schemes to be in this category.

We now make an observation regarding the finite ringed space $(S_\cU, \cO_{S_\cU})$ as introduced in Def. \ref{def:constrFinSp}, for the case in which $S$ is a scheme. 

\begin{observation}\label{obs:su}
Let $S$ be a scheme together with a finite covering by affine open subschemes $\cU$. As before, we can view the preorder $P(S_\cU)$ as a category and it is isomorphic to the dual category of irreducible open subsets in $S_\cU$. Then, the functor:
$$P(S_\cU)\lra \Rings, \qquad p \lra \cO_{S_\cU}(U_p)$$
is a (pre)sheaf {on a base of $S_\cU$}, where
$U_p$ denotes the smallest open set in $S_\cU$ containing $p$ (see Section \ref{sec-preorder}). Moreover {its extension to the whole $S_\cU$} is $\cO_{S_\cU}$.
\end{observation}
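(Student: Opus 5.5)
We need to prove Observation \ref{obs:su}. It has two claims: the assignment $p\mapsto \cO_{S_\cU}(U_p)$ is a (pre)sheaf on the base of irreducible opens of $S_\cU$, and its extension to all of $S_\cU$ recovers $\cO_{S_\cU}$.

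The plan is to reduce everything to the general facts on finite spaces from Section \ref{sec-preorder}.

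\textbf{Step 1: the preorder as a category.} By Theorem \ref{Thm:alextoppos}, $S_\cU$ is a finite space with $S_\cU=A(P(S_\cU))$. The minimal open sets $U_p$ form the base $\cB_{P(S_\cU)}$. By definition $p\le q$ if and only if $U_q\subseteq U_p$. So sending $p\mapsto U_p$ gives a functor from $P(S_\cU)$ to $\mathrm{Open}(S_\cU)^{op}$. It is fully faithful onto the full subcategory on the $U_p$, because hom-sets in both categories have at most one element.

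If $S_\cU$ is not $T_0$, two points with $U_p=U_q$ become isomorphic objects, so the functor is only an equivalence rather than an isomorphism. I would check directly whether it is $T_0$. If $U_p=U_q$, then $p$ and $q$ lie in exactly the same open sets of the form $\pi(U_I)$. Since $\pi^{-1}(\pi(U_i))=U_i$ (the sets $U_i$ are saturated for $\sim$), this forces $U^s=U^{s'}$ for representatives $s,s'$, so $p=q$. This saturation argument, together with the openness of the $\pi(U_i)$ asserted in Observation \ref{rmk:salasXdef}, is the one nontrivial point. It follows because $U^s$ is determined by the set $\{i : s\in U_i\}$, and the class of $s$ is therefore a union of cells of the form $\bigcap_{i\in I}U_i\setminus\bigcup_{j\notin I}U_j$.

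\textbf{Step 2: the presheaf on the base.} The functor $p\mapsto\cO_{S_\cU}(U_p)$ is the composite of $P(S_\cU)\to\mathrm{Open}(S_\cU)^{op}$ with $\cO_{S_\cU}$, so it is a presheaf on the base, with restriction maps inherited from $\cO_{S_\cU}$.

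It is automatically a sheaf on the base, as in Observation \ref{obs:gross}. Each $U_p$ is irreducible in the strong sense that any open cover of $U_p$ must contain $U_p$ itself, since $p$ lies in some member of the cover and that member then contains $U_p$. So the identity and gluing axioms on the base are trivially satisfied.

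\textbf{Step 3: extension to all of $S_\cU$.} By \cite{vakil} 2.5.1, the sheaf on the base extends uniquely up to isomorphism to a sheaf on $S_\cU$. Its sections on an open $W$ are the compatible families in $\varprojlim_{U_p\subseteq W}\cO_{S_\cU}(U_p)$, equivalently $\varprojlim_{p\in W}\cO_{S_\cU}(U_p)$.

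Since $\cO_{S_\cU}=\pi_\ast\cO_S$ is already a sheaf and $W=\bigcup_{p\in W}U_p$, the sheaf axiom for $\cO_{S_\cU}$ identifies $\cO_{S_\cU}(W)$ with this same limit. The pairwise intersections $U_p\cap U_q$ are themselves unions of basic opens, so the compatibility conditions match. Hence the extension is canonically isomorphic to $\cO_{S_\cU}$, compatibly with restrictions.

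Concretely, $\cO_{S_\cU}(U_p)=\cO_S(\pi^{-1}U_p)=\cO_S(U^s)$ for $s\in\pi^{-1}(p)$. Each $U^s$ is a finite intersection of affine opens, hence affine because $S$ is semi-separated. This identification is not needed for the statement, but it explains why the data is useful later.

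The main obstacle is Step 1, namely verifying the openness and saturation properties of $\pi$ so that $P(S_\cU)$ is genuinely the category of basic opens. Everything else is formal.
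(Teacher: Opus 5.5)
Your proposal is correct and follows essentially the paper's route. The paper gives no separate argument and relies on Observation~\ref{obs:gross} and Proposition~\ref{prop:gross}: a presheaf on the base of minimal opens $U_p$ of a finite space is automatically a sheaf on that base, and its extension recovers the sheaf it came from. Your explicit $T_0$ check supplies the injectivity of $p\mapsto U_p$ that the paper leaves implicit.

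One small caution about your closing remark, which you already flagged as not needed: $\pi^{-1}(U_p)=U^s$ holds for Salas's order topology on $S/\sim$, but it can fail for the literal quotient topology. For example, take $S=\Spec(R_1\times R_2)$ with $\cU=\{S,\Spec R_1\}$; then the quotient is discrete and $\pi^{-1}(U_p)=\Spec R_2\neq S=U^s$ for the second point. So you are right to keep it out of the proof.
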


We can repeat the construction in Obs. \ref{obs:su} replacing $S_\cU$ by a paraschematic space $X$, thus obtaining a presheaf:
$$P(X) \lra \Rings,\qquad   p \lra \cO_{X}(U_p)$$
extending to the sheaf $\cO_X$ on $X$. This leads us to the following definition \cite{salas}.

\begin{definition}\label{def:SpecS}
Let $(X,\cO_X)$ be a paraschematic space.
We define the functor:
$$
F_{X}:(P(X))^{op} \rightarrow
{\RngSpcs}, \qquad p \mapsto \mathrm{Spec}\, \cO_{X}(U_p)
$$
This allows us to define the functor:
$$
\mathrm{Spec}^S:\mathrm{PSch} \lra
{\RngSpcs}, \qquad X \mapsto \colim \, F_X
$$
where $\mathrm{PSch}$ denotes the category of paraschematic spaces.
\end{definition}

We state a result found in \cite{salas}, though in a different language
with slightly different assumptions (quasi-compact and quasi-separated schemes). The proof is a simple check.

\begin{proposition}\label{prop:Salas}
Let $S$ be a scheme and let $\cU$ be a finite covering of $S$ by affine schemes. Then $S_\cU$ is paraschematic and $\Spec^S(S_\cU)\cong S$ in the category of schemes.
\end{proposition}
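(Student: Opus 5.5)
We verify the two claims in turn: first that $S_\cU$ is paraschematic, then that the colimit defining $\Spec^S(S_\cU)$ is $S$. The key input is a description of the sections $\cO_{S_\cU}(U_p)$. For $p=\pi(s)\in S_\cU$, the smallest open neighbourhood $U_p$ should be $\pi(U^s)$. Indeed, the open sets of $S_\cU$ are the images of unions of intersections of members of $\cU$, since $\pi^{-1}\pi(U^s)=U^s$ by the definition of $\sim$ (a point $s'$ lies in $U^s$ iff every $U_i$ containing $s$ contains $s'$, iff $U^{s'}\subseteq U^s$). Hence $\pi^{-1}(U_p)=U^s$, and
$$\cO_{S_\cU}(U_p)=(\pi_\ast\cO_S)(U_p)=\cO_S(U^s).$$
Since $S$ is semi-separated and $U^s$ is a finite intersection of affine opens, $U^s$ is affine, so $\Spec\,\cO_{S_\cU}(U_p)\cong U^s$ canonically.

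For paraschematicity, let $p\geq q$, i.e.\ $U_q\subseteq U_p$, with $p=\pi(s)$ and $q=\pi(t)$. Then $U^t\subseteq U^s$ is an inclusion of affine open subschemes. The restriction map $\cO_S(U^s)\to\cO_S(U^t)$ induces, under the identifications above, exactly the open immersion $U^t\hookrightarrow U^s$. Thus $\Spec(\cO(U_q))\to\Spec(\cO(U_p))$ is an open embedding.

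For the colimit, the identifications give a natural isomorphism between $F_{S_\cU}$ and the diagram $p\mapsto U^{s}$ of affine opens of $S$ with open inclusions. The compatible inclusions $U^s\hookrightarrow S$ give a cocone, hence a morphism $\colim F_{S_\cU}\to S$. To show it is an isomorphism, we note that the family $\{U^s\}$ is closed under pairwise intersection up to covering. Indeed, for any $x\in U^s\cap U^{s'}$ we have $U^x\subseteq U^s\cap U^{s'}$, so $U^s\cap U^{s'}=\bigcup_{x} U^x$. Moreover the family contains the $U_i$, since $U_i$ is a union of the $U^x$ with $x\in U_i$, and it covers $S$. A diagram of open subspaces with inclusions that covers $S$ and has this intersection property has colimit $S$ in $\RngSpcs$. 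To check this, we compute the colimit on underlying spaces as a quotient of $\bigsqcup U^s$. Points are identified exactly when equal in $S$, using the zigzag through $U^x$. The topology is then the one in which a set is open iff its traces on each $U^s$ are open, which agrees with that of $S$. The structure sheaf is the limit of the pushforwards, which agrees with $\cO_S$ by the sheaf axiom. Alternatively, we can reduce to Lemma \ref{lemma:Fcan} and Proposition \ref{prop:gluingdatsimp} by showing that the colimit over the whole poset agrees with the gluing along the $U_i$ (a cofinality-type argument).

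Finally, the colimit lies in the category of schemes, being isomorphic to $S$. The main obstacle I expect is precisely this colimit identification, in particular that the colimit in $\RngSpcs$ of a non-filtered diagram of open inclusions is the union. That is where the intersection/covering property of the family $\{U^s\}$ must be used carefully, especially to ensure no extra identifications or spurious points arise.
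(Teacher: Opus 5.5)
Your proposal is correct. The paper gives no argument for this statement: it calls it ``a simple check'' and cites \cite{salas}. Your proof supplies that check, and it is essentially Salas's argument. You identify $\Spec\,\cO_{S_\cU}(U_{[s]})$ with $U^s$, which is affine by semi-separatedness. Restriction maps then become the open immersions $U^t\hookrightarrow U^s$. Finally, $\Spec^S(S_\cU)$ is the colimit of the inclusion diagram of the $U^s$. That colimit is $S$ because the family covers $S$ and $U^s\cap U^{s'}=\bigcup_{x\in U^s\cap U^{s'}}U^x$. Your direct verification of this colimit in $\RngSpcs$ (points, final topology, sheaf axiom) is more self-contained than mimicking Proposition \ref{prop:fispcssalascompschemes} via Lemma \ref{lemma:Fcan}. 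That lemma's diagram is indexed by $\Gamma(G(\cU)_\bullet)$ rather than $P(S_\cU)$, so that route needs the cofinality step you mention. (Minor: the family $\{U^s\}$ need not contain the $U_i$; you only need that it covers $S$, which holds since $s\in U^s$.)

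One step should be stated as a convention rather than derived, because as written your justification does not establish it. The identity $\pi^{-1}\pi(U^s)=U^s$ shows that each $\pi(U^s)$ is open. It does not show these are the minimal open sets if $S/\sim$ carries the quotient topology. In that topology the minimal neighbourhood of $\pi(s)$ is the image of the smallest saturated open set containing $s$. That set can be strictly smaller than $U^s$ and non-affine, and then the proposition fails.

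For example, take $S=\mathbb{A}^2_k$, $f=1-x-y$ and $\cU=\{S,D(f),D(xf),D(yf)\}$. The origin is its own class. A point $s\in V(f)$ has $U^s=S$, but its smallest saturated open neighbourhood is $S\setminus\{0\}$, whose ring of functions is $k[x,y]$. The two basic opens attached to $V(f)$ and to the origin become incomparable. The resulting $\Spec^S$ glues $\mathbb{A}^2$ and $D(f)$ along $D(f)\setminus\{0\}$, i.e.\ it is the plane with doubled origin.

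Your proof works because $S_\cU$ is meant to carry Salas's topology (Definition \ref{def:constrFinSp} says ``as in \cite{salas}''). That topology is defined by the order $[s]\le[s']\iff U^s\supseteq U^{s'}$. For it, $U_{[s]}=\pi(U^s)$ holds by definition, and $\pi$ is continuous because $\pi^{-1}\pi(U^s)=U^s$. Make this explicit; with it, the rest of your argument goes through as written.
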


We now turn to examine the ringed space $S_\cU^2$ (see Definition \ref{Fund::preschspcover}), which is more interesting to
our purposes.

\begin{proposition}\label{prop:fispcssalascompschemes}
Let $S$ be a scheme and let $\cU$ be a finite covering of $S$ by affine open subschemes. Then $S_\cU^2$ is paraschematic. In addition, $\Spec^S(S_\cU^2)\cong S$ in the category {$\Sch$}.
\end{proposition}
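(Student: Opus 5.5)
The plan has two parts: first verify paraschematicity directly from the definition of $S_\cU^2$, then identify $\Spec^S(S_\cU^2)$ with $\colim F_{\mathrm{can}}^{\cU}$ and invoke Lemma \ref{lemma:Fcan}.

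\emph{Paraschematicity.} Write $P=P_{G(\cU)_\bullet}$ with elements $p_i,p_{ij},p_{ijk}$. In the Alexandrov topology on $S_\cU^2=A(P)$, the smallest open set containing $p$ is $U_p$. By Observation \ref{rmk:gbullet} and Proposition \ref{prop:gross}, $\cO_{S_\cU^2}(U_p)=F^2_\cU(p)$, that is $\cO_S(U_i)$, $\cO_S(U_{ij})$ or $\cO_S(U_{ijk})$. Since $S$ is semi-separated and each $U_i$ is affine, every $U_{ij}$ is affine. Then $U_{ijk}=U_{ij}\cap U_k$ is an intersection of two affine opens, hence affine too (possibly empty, in which case its ring is $0$ and its Spec is $\emptyset$). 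Therefore $\Spec\cO_S(V)\cong V$ canonically for every $V\in\{U_i,U_{ij},U_{ijk}\}$. For a relation between two elements of $P$, the map $\Spec(\cO(U_q))\to\Spec(\cO(U_p))$ is $\Spec$ of the restriction map $\cO_S(V)\to\cO_S(V')$ for an inclusion of affine opens $V'\subseteq V$. Under these identifications it is exactly the open immersion $V'\hookrightarrow V$, because $\Spec$ and $\Gamma$ are inverse equivalences on affine schemes. This proves that $S_\cU^2$ is paraschematic. The only care needed is with the direction of the order, since $P(S_\cU^2)$ recovers $P$ by Notation \ref{notation:alexandrovtop}.

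\emph{Identifying the colimit.} By Definition \ref{def:SpecS}, $\Spec^S(S_\cU^2)=\colim F_{S_\cU^2}$, where $F_{S_\cU^2}:P^{op}\to\RngSpcs$ sends $p\mapsto\Spec\cO_{S_\cU^2}(U_p)$. The canonical isomorphisms $V\cong\Spec\cO_S(V)$ are natural in inclusions of affine opens, so they assemble into a natural isomorphism $F_{\mathrm{can}}^\cU\cong F_{S_\cU^2}$ of functors on $\Gamma(G(\cU)_\bullet)^{op}\cong P^{op}$ (Observation \ref{rmk:gbullet}). Hence $\colim F_{S_\cU^2}\cong\colim F^\cU_{\mathrm{can}}\cong S$ by Lemma \ref{lemma:Fcan}. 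The colimit is a priori taken in $\RngSpcs$. Since it is isomorphic to the scheme $S$, and the natural maps from the $U_i$ are open embeddings, it lies in $\Sch$, which gives the isomorphism in $\Sch$.

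\emph{Main obstacle.} The substantive point is that $\colim F^\cU_{\mathrm{can}}\cong S$, which Lemma \ref{lemma:Fcan} leaves to the reader. For this I would use Proposition \ref{prop:gluingdatsimp}: I need $F^\cU_{\mathrm{can}}$ to be a gluing functor, so that its colimit is the gluing of the datum $(U_i,U_{ij},\phi_{ij})$, and that gluing is $S$. Condition 1 holds because intersections of opens in $S$ are fibre products. Condition 2 is the subtle one, and the relevant triple intersections have to be handled when they are empty. If $v_a$ and $v_e$ are not adjacent, then $U_a\cap U_e=\emptyset$, so $U_{ac}\cap U_{ce}\subseteq U_a\cap U_e=\emptyset$ and the required fibre product is empty.

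Finally, the 2-simplices $p_{ijk}$ with $U_{ijk}=\emptyset$ map to the empty scheme. Empty objects contribute nothing to the colimit beyond the compatibilities that are already trivially satisfied, so they do not change the result.
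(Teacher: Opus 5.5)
Your proposal is correct and follows essentially the same route as the paper. Paraschematicity comes from semi-separatedness (so every $U_{ij}$ and $U_{ijk}$ is affine) together with the anti-equivalence between affine schemes and rings, and the natural isomorphism $F_{S_\cU^2}\cong F^{\cU}_{\mathrm{can}}$ then reduces the second claim to Lemma \ref{lemma:Fcan}; your extra check that $F^{\cU}_{\mathrm{can}}$ is a gluing functor, so that Proposition \ref{prop:gluingdatsimp} gives $\colim F^{\cU}_{\mathrm{can}}\cong S$, just fills in a verification the paper leaves to the reader.
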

\begin{proof}
First of all, we check that it is paraschematic. {As $S$ is semi-separated, this follow from the very definition of $S_\cU^2$ as we have that} 
for every $p\leq q$ the ring morphisms $\cO_{S_\cU^2}(U_p)\rightarrow\cO_{S_\cU^2}(U_q)$ corresponds, via the antiequivalence between affine schemes and rings, to an open embedding.
Using this equivalence, we see that the {functor} ${F^2_\cU:}P_{G(\cU)_\bullet}\rightarrow \Rings$ (see Definition \ref{Fund::preschspcover}) inducing the functor $$F_X:P_{G(\cU)_\bullet}^{op}\rightarrow \Sch$${(see Definition \ref{def:SpecS})} is isomorphic to $F_{\mathrm{can}}^\cU:\Gamma(G(\cU)_\bullet)^{op}\rightarrow\RngSpcs$. Using Lemma \ref{lemma:Fcan}, this concludes the proof.
\end{proof}

\begin{remark}
By Propositions \ref{prop:Salas}, \ref{prop:fispcssalascompschemes},
starting with a scheme $S$ and a
finite affine cover $\cU$ of it, we can build two distinct paraschematic spaces that we called $S_\cU$ and $S_\cU^2$. The former is a schematic space as in \cite{salas} and has some good cohomological properties (see \cite{salashomcoh}). The latter might not be schematic (see Remark \ref{rmk:Ynotschematic}), but its underlying finite topological space arises from the poset of simplices of a regular 2 dimensional semisimplicial set, while this is not always the case for $S_\cU$. Despite of these differences, in both cases (see \cite{salas} and Proposition \ref{prop:fispcssalascompschemes}) we have that $\Spec^S(S_\cU)\cong\Spec^S(S_\cU^2)\cong S$.
\end{remark}

In our next remark we point out that the paraschematic space $S_\cU^2$ we obtained, while being paraschematic, a priori might not be in general schematic in the sense of \cite{salas}.

\begin{remark}\label{rmk:Ynotschematic}
In \cite{salas} a paraschematic space $(X,\cO_X)$ is said to be \emph{schematic} if for any $p,q\in X$, any $p'\geq p$ and any $i\geq 0$ the natural morphism $H^i(U_p\cap U_q,\cO_X)\otimes_{\cO_X(U_p)}\cO_{X}(U_{p'})\rightarrow H^i(U_{p'}\cap U_p,\cO_X)$ is an isomorphism (see \cite{salas} 4.1 and 4.2).

A paraschematic space $S_\cU^2$ as above is not always schematic. Indeed, assume that in the covering $\cU$ there are four {distinct} affine schemes $U_1$, $U_2$, $U_3$ and $U_4$ such that $U_1\cap U_2=:\Spec(A)$, $\Spec(B):=U_1\cap U_2\cap U_3$, $\Spec(C):=U_1\cap U_2\cap U_4$ and that $\Spec(D):=U_1\cap U_2\cap U_3\cap U_4$ is not empty. The corresponding simplices in $G(\cU)_\bullet$ can be depicted as follows.
\begin{displaymath}\label{cubecat}
\xymatrix{
 & \underset{\bullet}{c} & \\
 a\bullet \ar[rr]_{v} \ar[ur]^{e_{13}} \ar[dr] & & \bullet b \ar[ul]_{e_{23}}\\
  & \underset{\bullet}{d} \ar[ur] &
  }
\end{displaymath}where $a,b,c,d$ correspond to $U_1,U_2,U_3,U_4$ respectively and $v$ corresponds to $U_1\cap U_2$. We denote as $p,q$ the 2-simplices corresponding to the upper and lower 3-clique in the figure. In this case, we have that $U_a, U_b, U_c,U_d, U_v, U_p$ and $U_q$ seen as open subspaces of $S_\cU^2$ are affine ringed spaces in the sense of Salas \cite[Definition 3.10]{salas} by 4.11 in \cite{salas} (note they are acyclic because of 2.13 of \cite{salas}). Now, if $S_\cU^2$ were schematic, then $U_v$ would be affine and schematic \cite{salas}
and in particular it should be semiseparated \cite[Proposition 4.9]{salas}. However, $\cO_{S_\cU^2}(U_p\cap U_q)\cong \textbf{1}$ is the trivial ring as $U_p\cap U_q=\emptyset$ but $\cO_{S_\cU^2}(U_p)\otimes_{\cO_{S_\cU^2}(U_v)}\cO_{S_\cU^2}(U_q)\cong B\otimes_AC\cong D$ contradicting 4.12 (2) in \cite{salas}. Therefore, $S_\cU^2$ is not schematic.
\end{remark}

We end this section with a remark regarding quasi-coherent modules on (para) schematic spaces.

\begin{remark}\label{rmk:qcohsu2}
Let $S$ be a scheme and $\qCoh(S)$ the category of quasi-coherent modules on $S$ \cite[Chapter 2]{hartshorne}.
For a given finite ringed space $W$, we also have the notion of quasi-coherent module on it and we denote by $\qCoh(W)$ the category of quasi-coherent modules on $W$. Then, by faithfully flat descent \cite{Vis}, we have an equivalence of categories:
$$
\qCoh(S)\cong\qCoh(S_\cU^2, \cO_{S_\cU^2})
$$
We leave to the reader all the checks involved.
\end{remark}

\subsection{The category $\schtwo$}\label{sect:schtwo}
Let be $S$ be a scheme, $\cU$ a finite covering by affine open subschemes of $S$.
Recall we assume all schemes to be quasi-compact and semi-separated.
Moreover, all graphs we consider are finite and belong to either $\diGraphs_{\leq 1}$
or $\Graphs_\leq 1$.

We want to understand some key properties of the finite ringed space $S_\cU^2$ as defined in \ref{Fund::preschspcover}, leading us first to define the category $\schtwo$, its localization and
then to our main result of Sec. \ref{subsec:schemes} Theorem \ref{main-res}. 

The covering $\cU$ determines the 2-dimensional semisimplicial set $G(\cU)_\bullet$, which has
1-skeleton isomorphic to a digraph having at most one edge connecting each pair of distinct vertices (vertices correspond to open sets in $\cU$, while edges to intersections of two open sets).
{Recall that }the finite ringed space $S_\cU^2$ allows us to define
two functors (see Sec. \ref{sect:Gluingringedspaces}):
$$
\begin{array}{c}
F_{\cU}^2: P_{G(\cU)_\bullet}\cong\Gamma(G(\cU)_\bullet)\rightarrow \Rings, \quad
p_i\mapsto \cO_S(U_i),\, p_{ij}\mapsto \cO_S(U_{ij}),\, p_{ijk}\mapsto\cO_S(U_{ijk})\\ \\
F_{\mathrm{can}}^{\cU}: \Gamma(G(\cU)_\bullet)^{op}\rightarrow \RngSpcs, \quad
p_i\mapsto U_i,\, p_{ij}\mapsto U_{ij},\, p_{ijk}\mapsto U_{ijk}
\end{array}
$$
identifying with a small abuse of
notation the objects of $\Gamma(G(\cU))^{op}$ with those in
$P_{G(\cU)_\bullet}\cong\Gamma(G(\cU)_\bullet)$ (see also Obs. \ref{graph-poset} and
\ref{rmk:gbullet}).

Let $T$ be a {{2}}-sub-semisimplicial
set of $G(\cU)_\bullet$ as in Defs. \ref{glcube}, \ref{def:grfin}. Notice
that $\Gamma(T)^{op}=\cube$, so that, by Prop.  \ref{prop:gluingdatsimp} 
$(F_{\mathrm{can}}^{\cU})_{|\Gamma(T)^{op}}$
is a gluing cube, hence it corresponds to a gluing datum (Defs. \ref{gluing-datum},
\ref{gluing-rs}, Sec. \ref{GDrngspcs}).
Using the anti-equivalence
between affine schemes and rings and the fact that $\cU$ consists of affine schemes,
we see that $(F_{\mathrm{can}}^{\cU})_{|\Gamma(T)^{op}}$ being a gluing cube is equivalent to

$F_{\cU}^2$ preserving pushouts, i.e. fibered coproducts. 
Our purpose is to define a category $\schtwo$, having objects constructed using the semisimplicial sets we have introduced so far, that we can localize at a certain class of morphisms to get a category equivalent
to the category of schemes. Heuristically, if $S$ is a scheme and $\cU$ a finite
affine covering, we identify $S$ with the object $(G(\cU)_\bullet, F^2_\cU)$ of $\schtwo$.
However, to define $\schtwo$ and the equivalence of category,
we need a preliminary definition followed by
a discussion.
\begin{definition}\label{def:ext2simp}
Let be $G$ be a graph and $F:\Gamma(G_\bullet)^{op}\rightarrow\RngSpcs$
a gluing functor (Def. \ref{def:gluingfunctor}).
We define the \emph{the degenerate expansion} $\G_\bullet$ of $G_\bullet$ to be the 2-dimensional semisimplicial set obtained by adding to $G_\bullet$ exactly one self loop for each vertex of $G$ and all the possible
two dimensional simplices $u\to u\to u$, $u\to u\to v$, $u\to v\to v$ and  $v\to v\to v$
such that $u, v\in G_0$ and $u\to v$ is an element of $G_1$. Notice that we are neither adding extra vertices,
nor edges, except loops.

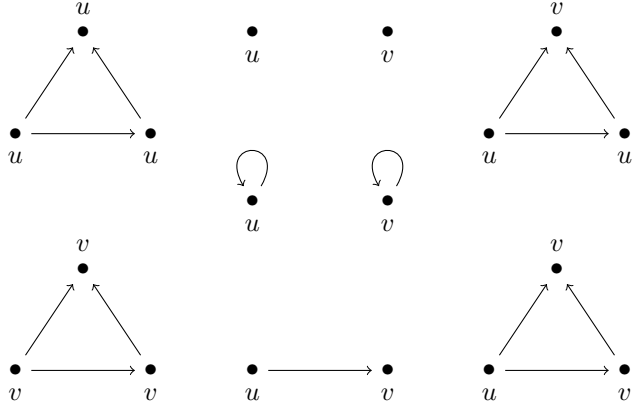
\begin{figure}[h!]
\begin{center}
    \begin{tikzpicture}[scale=.45]
    \node (u) at (-2,0) {$\bullet$};
    \node at (-2,-0.7) {$u$};
    \draw [->] (u) to [out=60,in=120, loop] (u);

    \node (vp) at (-2,5) {$\bullet$};
    \node at (-2,4.3) {$u$};

    \node (up) at (2,5) {$\bullet$};
    \node at (2,4.3) {$v$};
    
    \node (v) at (2,0) {$\bullet$};
    \node at (2,-0.7) {$v$};
    \draw [->] (v) to [out=60,in=120, loop] (v);

    \node (x) at (5,2) {$\bullet$};
    \node (y) at (9,2) {$\bullet$};
    \node (z) at (7,5) {$\bullet$};
    \node at (5,1.3) {$u$};
    \node at (9,1.3) {$u$};
    \node at (7,5.7) {$v$};
    \draw [->] (x) to  (y);
    \draw [->] (y) to  (z);
    \draw [->] (x) to  (z);
    
    \node (y2) at (-5,2) {$\bullet$};
    \node (x2) at (-9,2) {$\bullet$};
    \node (z2) at (-7,5) {$\bullet$};
    \node at (-5,1.3) {$u$};
    \node at (-9,1.3) {$u$};
    \node at (-7,5.7) {$u$};
    \draw [->] (x2) to  (y2);
    \draw [->] (y2) to  (z2);
    \draw [->] (x2) to  (z2);
    
    \node (x3) at (5,-5) {$\bullet$};
    \node (y3) at (9,-5) {$\bullet$};
    \node (z3) at (7,-2) {$\bullet$};
    \node at (5,-5.7) {$u$};
    \node at (9,-5.7) {$v$};
    \node at (7,-1.3) {$v$};
    \draw [->] (x3) to  (y3);
    \draw [->] (y3) to  (z3);
    \draw [->] (x3) to  (z3);

    \node (ua) at (-2,-5) {$\bullet$};
    \node (va) at (2,-5) {$\bullet$};
 
    \node at (-2,-5.7) {$u$};
    \node at (2,-5.7) {$v$};
    \draw [->] (ua) to  (va);

    \node (y4) at (-5,-5) {$\bullet$};
    \node (x4) at (-9,-5) {$\bullet$};
    \node (z4) at (-7,-2) {$\bullet$};
    \node at (-5,-5.7) {$v$};
    \node at (-9,-5.7) {$v$};
    \node at (-7,-1.3) {$v$};
    \draw [->] (x4) to  (y4);
    \draw [->] (y4) to  (z4);
    \draw [->] (x4) to  (z4);
    \end{tikzpicture}
    \end{center}
    \caption{All the simplices of the degenerate expansion $\G_\bullet$ of $
G_\bullet: 
 u\rightarrow v
$}
\end{figure}

We call the added simplices \emph{degenerate simplices} of $\G_\bullet$.
\par
We also define the \emph{the degenerate expansion} $\F$ of $F$ to be the functor
$\F: \Gamma(\G_\bullet)^{op}\rightarrow\RngSpcs$ as follows.

1) {$\F|_{\Gamma(G_\bullet)^{op}}=F$} 

2) Let $u, v\in G_0$ and $u\to v \in G_1$. We set
$$
\begin{array}{cc}
\F(u\to u):=F(u), & \F(u\to u\to u):=F(u),  \\  \F(u\to u\to v):=F(u \to v),
& \F(u\to v\to v):=F(u \to v)
\end{array}
$$

3) For each morphism $u\to (u\to u)$ in $\Gamma(\G_\bullet)$ its image
$\F(u\to u)=F(u)\rightarrow \F(u)=F(u)$ is defined to be the identity, for each morphism $(u\to v)\to(u\to u\to v)$ and $(u\to v)\to(u\to v\to v)$, $\F(u\to u\to v)\to \F(u\to v)$ and $\F(u\to v\to v)\to \F(v\to v)$ are defined to be the identity and for each morphism $(u\to u)\to(u\to u\to v)$ and $(u\to u)\to(v\to u\to u)$, $\F(u\to u\to v)\to \F(u\to u)$ and $\F(v\to u\to u)\to \F(u\to u)$ are defined to be the open embeddings $F(u\to v)\hookrightarrow F(u)$ and $F(v\to u)\hookrightarrow F(u)$ respectively.
\end{definition}

\begin{observation}\label{obs:extendediso}
In the notation of Definition \ref{def:ext2simp}
one can check that the colimit of the functors $F$ and $\F$ are isomorphic.
\end{observation}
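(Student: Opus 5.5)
The plan is to compare the two colimits by exhibiting a cocone for $\F$ built from the colimit of $F$, and conversely, and then checking the universal properties match. Let $Y:=\colim F$ with structure maps $\iota_x:F(x)\to Y$ for $x\in\Gamma(G_\bullet)^{op}$. Since $\Gamma(G_\bullet)^{op}$ is a full subcategory of $\Gamma(\G_\bullet)^{op}$ and $\F|_{\Gamma(G_\bullet)^{op}}=F$, any cocone under $\F$ restricts to a cocone under $F$. This gives a canonical comparison morphism $\colim F\to\colim\F$. The goal is to show that the restriction map on cocones is a bijection, so that this comparison is an isomorphism.

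To extend a cocone $(\iota_x)$ under $F$ to one under $\F$, I would define it on each degenerate simplex by the value of $\F$ given in Definition \ref{def:ext2simp}:
\begin{itemize}
\item on $u\to u$ and $u\to u\to u$, take $\iota_u$;
\item on $u\to u\to v$ and $u\to v\to v$, take $\iota_{u\to v}$.
\end{itemize}
One then checks compatibility with every arrow of $\Gamma(\G_\bullet)^{op}$ that touches a degenerate simplex. The arrows whose image under $\F$ is the identity are compatible trivially. The arrows $\F(u\to u\to v)\to\F(u\to u)$ are the open embeddings $F(u\to v)\hookrightarrow F(u)$, and here compatibility reduces to $\iota_u\circ F(\alpha)=\iota_{u\to v}$, which already holds in the cocone under $F$. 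The same argument handles the faces landing in $v\to v$, together with the analogous arrows for $v\to u\to u$.

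Uniqueness of the extension also holds. Every degenerate simplex $d$ admits an arrow in $\Gamma(\G_\bullet)^{op}$ to or from a nondegenerate simplex whose image under $\F$ is the identity; for example, $(u\to u)\to u$ is sent to $\mathrm{id}_{F(u)}$. Hence any cocone component at $d$ is forced to equal the component at that nondegenerate simplex. So cocones under $\F$ with vertex $Z$ correspond bijectively to cocones under $F$ with vertex $Z$, naturally in $Z$, and therefore $\colim\F\cong\colim F$. Equivalently, the inclusion $\Gamma(G_\bullet)^{op}\subseteq\Gamma(\G_\bullet)^{op}$ behaves like a final functor relative to $\F$.

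The main obstacle is bookkeeping rather than anything conceptual. One has to list all face arrows out of the degenerate 2-simplices. For example, $u\to u\to v$ has faces $u\to u$, $u\to v$ and $u\to v$ again, and one must confirm that Definition \ref{def:ext2simp} assigns each of them a morphism consistent with the forced cocone components. In particular, the two faces of $u\to u\to v$ that both equal $u\to v$ must both be sent to the identity on $F(u\to v)$; otherwise the forced component at $u\to u\to v$ would not be well defined. I would first write out the full face list for a single edge $u\to v$, as in the figure, and verify each case. Since colimits in $\RngSpcs$ exist and the argument only uses universal properties, nothing specific to ringed spaces is needed beyond existence.
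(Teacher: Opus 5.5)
Your proof is correct, and it is exactly the routine check the paper leaves to the reader (the paper gives nothing beyond ``one can check''): restricting cocones along the full inclusion $\Gamma(G_\bullet)^{op}\subseteq\Gamma(\G_\bullet)^{op}$ is a bijection, because every degenerate simplex is joined to a nondegenerate one by arrows that $\F$ sends to identities, and the arrows from degenerate $2$-simplices to loops go to the open embeddings $F(u\to v)\hookrightarrow F(u)$, whose compatibility is already part of the cocone under $F$. You are right not to claim literal finality, since the two parallel arrows $(u\to u)\to u$ make the comma category at the loop disconnected, so the argument genuinely uses that $\F$ sends both to $\mathrm{id}_{F(u)}$; you are also right to read the paper's ``$\F(u\to v\to v)\to\F(v\to v)$ is the identity'' as a typo for the identity onto $\F(u\to v)$, with the map to the loop at $v$ being the open embedding, and to take the unspecified face maps of $u\to u\to u$ to be identities.
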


We are ready to define the category $\schtwo$ that we shall localize at a certain class of morphisms
and compare to the one
of schemes.

\begin{definition}\label{schtwodef}
We define the category $\schtwo$ as follows.

\noindent
{\it Objects:}
the objects of $\schtwo$ are pairs
$A=(\cA,F_A:\Gamma(\cA)\rightarrow\Rings)$ {where} $\cA$ is a $2$-dimensional finite
semisimplicial set isomorphic to the degenerate expansion $\G_\bullet$ of a given
semisimplicial set $G_\bullet$, $G$ a graph, and
the functor $\Spec\circ F_A$ factors through the degenerate expansion of a
gluing functor $\Gamma(G_\bullet)^{op}\rightarrow\RngSpcs$.

\noindent
{\it Arrows:} the morphisms of $\schtwo$
are pairs $\psi=(f,\varepsilon):A=(\cA,F_A)\rightarrow B=(\cB, F_B)$,
where $f:\cA\rightarrow\cB$ is a morphism between $2$-dimensional finite semisimplicial sets and
$\varepsilon:F_B\circ\Gamma(f)\rightarrow F_A$ is a natural transformation.
\end{definition}

\begin{observation}\label{rmk:schtwowe}
We notice some key facts regarding the category $\schtwo$.
\begin{enumerate}
\item Given an object $(\cA, F_A)$ in $\schtwo$,
the functor $F_A:\Gamma(\cA)\rightarrow\Rings$ gives a sheaf on a base
for the topological space $A(P_{\cA})$. Hence we can associate
to $(\cA, F_A)$ the ringed space $(A(P_{\cA}), \cO_A)$, where $\cO_A$ is obtained by from $F$
{by noticing that $F_A$ is a sheaf of rings on a base}.
This space is also paraschematic, as one can readily see. 
\item Given two objects $A=(\cA, F_A)$, $B=(\cB, F_B)$ in $\schtwo$,
a morphism $\psi=(f,\varepsilon):A\rightarrow B$ gives rise to a map between
the corresponding paraschematic spaces.
Indeed $f$ induces a continuous map $|f|:A(P_{\cA})\rightarrow A(P_{\cB})$.
{From} the natural transformation $\varepsilon$ we get a sheaf morphism
$\bar{\varepsilon}:|f|^{-1}\cO_B\rightarrow \cO_A$ and so, by adjunction,
a morphism $\varepsilon^{\flat}:\cO_B\rightarrow |f|_\ast\cO_A$, see \cite{GW} page 55. 
\end{enumerate}

\end{observation}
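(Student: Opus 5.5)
The plan is to treat the two items separately. Item (1) reduces to Proposition \ref{prop:gross} together with the defining property of objects of $\schtwo$. Item (2) reduces to a computation of inverse images on the basic opens $U_p$ of an Alexandrov space.

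\emph{Item (1).} A face relation $x\leq y$ in $P_{\cA}$ (i.e.\ $x$ is a face of $y$) gives $y\in U_x$. Hence $U_y\subseteq U_x$, and the arrow $x\to y$ of $\Gamma(\cA)\cong P_{\cA}$ corresponds to the inclusion $U_y\subseteq U_x$ of basic opens. A covariant functor $F_A:\Gamma(\cA)\to\Rings$ is therefore exactly a presheaf on the base $\cB_{P_{\cA}}$ of irreducible opens $U_p$. By Observation \ref{obs:gross} it is automatically a sheaf on that base, and by Proposition \ref{prop:gross} (for sheaves of rings) it extends uniquely to a sheaf $\cO_A$ on $A(P_{\cA})$ with $\cO_A(U_p)=F_A(p)$.

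For paraschematicity, take two comparable points $x\leq y$. The restriction $\cO_A(U_x)\to\cO_A(U_y)$ is $F_A(x\to y)$, so we must show that $\Spec F_A(y)\to\Spec F_A(x)$ is an open embedding. By definition of $\schtwo$, $\Spec\circ F_A$ factors through the degenerate expansion $\F$ of a gluing functor $F$. Every arrow of $\Gamma(\G_\bullet)^{op}$ is sent by $\F$ either to an arrow of a gluing cube, wedge or point (an open embedding, by Definition \ref{def:gluingfunctor}), or, by item 3) of Definition \ref{def:ext2simp}, to an identity or to one of the open embeddings $F(u\to v)\hookrightarrow F(u)$. Since open embeddings are closed under composition, we are done. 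A small point to verify is that every arrow of $\Gamma(\cA)$ is a composite of the generating face arrows listed there.

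\emph{Item (2).} A morphism $f:\cA\to\cB$ of semisimplicial sets sends faces to faces, so $P_{\cA}\to P_{\cB}$ is monotone. Preimages of up-sets are therefore up-sets, and $|f|$ is continuous for the Alexandrov topologies. The key step is to compute $|f|^{-1}\cO_B$ on the basic opens. Because $U_p$ is the smallest open containing $p$, and $f(U_p)\subseteq U_{f(p)}$ with $U_{f(p)}$ the smallest open containing $f(p)$, the colimit defining the presheaf inverse image at $U_p$ is attained at $U_{f(p)}$. This gives $(|f|^{-1}\cO_B)(U_p)=F_B(f(p))$; the same argument shows this presheaf on the base is already a sheaf on the base, so no sheafification alters it.

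The components $\varepsilon_p:F_B(f(p))\to F_A(p)$ are compatible with restrictions by naturality of $\varepsilon:F_B\circ\Gamma(f)\to F_A$. Hence they define a morphism of sheaves on the base, which extends uniquely, again via Proposition \ref{prop:gross}, to $\bar\varepsilon:|f|^{-1}\cO_B\to\cO_A$. The $(|f|^{-1},|f|_\ast)$ adjunction then yields $\varepsilon^\flat:\cO_B\to|f|_\ast\cO_A$, and $(|f|,\varepsilon^\flat)$ is the required morphism of ringed spaces.

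I expect the main obstacle to be the identification $(|f|^{-1}\cO_B)(U_p)=F_B(f(p))$. One must check carefully that the sheafified inverse image agrees with this naive formula on the base for these finite Alexandrov spaces. The rest is bookkeeping with the order conventions.
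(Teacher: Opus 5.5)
Your proposal is correct and follows the same route as the paper, which only sketches these points. The sketch is: extend $F_A$ from the base of irreducible opens via Proposition \ref{prop:gross}; read off paraschematicity from the open embeddings built into the degenerate expansion; and obtain $\bar\varepsilon$ from the components of $\varepsilon$ on basic opens, then transpose along the $(|f|^{-1},|f|_\ast)$ adjunction. Your identification $(|f|^{-1}\cO_B)(U_p)=F_B(f(p))$ is right, because on these Alexandrov spaces sections over $U_p$ equal the stalk at $p$. One caveat, which you share with the paper: since $\cA$ contains self-loops and degenerate $2$-simplices, $\Gamma(\cA)$ has parallel arrows (e.g.\ two arrows from $u$ to the loop at $u$) and is not literally the poset $P_{\cA}$. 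You should therefore add that $F_A$ sends parallel arrows to the same ring map, which holds by the construction of the degenerate expansion.
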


Given an object $A=(\cA,F_A)\in\schtwo$, by applying the functor $\Spec^S$ to $(A(P_{\cA}), \cO_A)$,
obtained as in Obs. \ref{rmk:schtwowe}, we get a scheme.
Hence, we can give the following definition.

\begin{definition}\label{specC}
Let the notation be as above. We define the functor:
$$
\Spec^\cC:\schtwo\rightarrow\Sch, \qquad \Spec^\cC(\cA,F):=\Spec^S(A(P_{\cA}), \cO_A),
$$
the definition on the morphisms being clear.
\end{definition}

We conclude this section with a key definition and an observation we will need in
the sequel.

\begin{definition}
Let $S$ be a scheme and $\cU$ a finite covering by open affine subschemes of $S$.
Define the functor
$$
\F^2_\cU:P_{\G(\cU)_\bullet} \cong \Gamma(\G(\cU)_\bullet)\rightarrow\Rings,
$$
via  $\F^\cU_{\mathrm{can}}$, 
the degenerate expansion of  $F^\cU_{\mathrm{can}}$.
Define also:
$$
(\cU)^2_{\mathrm{sch}}:=(\G(\cU)_\bullet,\F^2_\cU)
$$
\end{definition}

\begin{observation}\label{obs:revschtwo}
We have
that $(\cU)^2_{\mathrm{sch}}$
is an object of $\schtwo$ and that $\Spec^\cC((\cU)^2_{\mathrm{sch}})\cong S$
by Observation \ref{obs:extendediso}.
\end{observation}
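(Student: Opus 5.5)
The statement to prove is Observation \ref{obs:revschtwo}. It has two parts: that $(\cU)^2_{\mathrm{sch}}=(\G(\cU)_\bullet,\F^2_\cU)$ is an object of $\schtwo$, and that $\Spec^\cC((\cU)^2_{\mathrm{sch}})\cong S$. I would handle them in this order.

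\textbf{Membership in $\schtwo$.} First, $\G(\cU)_\bullet$ is by construction the degenerate expansion of $G(\cU)_\bullet$, and $G(\cU)$ is a finite graph in $\stGraphs$. It remains to show that $\Spec\circ\F^2_\cU$ factors through the degenerate expansion of a gluing functor.

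I would take $F^\cU_{\mathrm{can}}$ as the gluing functor, and check that it satisfies Definition \ref{def:gluingfunctor}.
\begin{itemize}
\item Condition 1: every map is an inclusion of opens $U_{ijk}\subseteq U_{ij}\subseteq U_i$. The cube faces are cartesian because $U_{ij}\times_{U_i}U_{ik}=U_{ijk}$ inside $S$.
\item Condition 2: if $U_a\cap U_e=\emptyset$, then $U_{ab}\times_{U_c}U_{de}\subseteq U_a\cap U_e$ is empty.
\end{itemize}
Since $S$ is semi-separated, all $U_{ij}$ and $U_{ijk}$ are affine. Hence $\Spec(\cO_S(U_{\ast}))\cong U_{\ast}$ naturally, so $\Spec\circ F^2_\cU\cong F^\cU_{\mathrm{can}}$, as in the proof of Proposition \ref{prop:fispcssalascompschemes}.

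On degenerate simplices, $\F^2_\cU$ is defined by taking global sections of $\F^\cU_{\mathrm{can}}$. The values are again $\cO_S(U_i)$ or $\cO_S(U_{ij})$, with identity or restriction maps. The same natural isomorphism therefore gives $\Spec\circ\F^2_\cU\cong\F^\cU_{\mathrm{can}}$, which is the required factorization.

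\textbf{Computing $\Spec^\cC$.} By Definition \ref{specC}, $\Spec^\cC((\cU)^2_{\mathrm{sch}})$ is the colimit of $F_X$, with $X=(A(P_{\G(\cU)_\bullet}),\cO)$. By Observation \ref{rmk:gbullet} and Proposition \ref{prop:gross}, the restriction of $\cO$ to the base of irreducible opens $U_p$ recovers $\F^2_\cU$. So $F_X\cong\Spec\circ\F^2_\cU\cong\F^\cU_{\mathrm{can}}$ as functors $\Gamma(\G(\cU)_\bullet)^{op}\to\RngSpcs$. Observation \ref{obs:extendediso} gives $\colim\F^\cU_{\mathrm{can}}\cong\colim F^\cU_{\mathrm{can}}$, and Lemma \ref{lemma:Fcan} identifies the latter with $S$.

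\textbf{Main obstacle.} The delicate step is Observation \ref{obs:extendediso} itself: showing the degenerate expansion does not change the colimit. I would argue it directly by showing that the new objects contribute nothing new to a cocone. Every degenerate object $d$ has some original object $x$ with $\F(d)=F(x)$, related by a structure map that $\F$ sends to an identity:
\begin{itemize}
\item $u\to u$, and $u\to u\to u$ via its face $u\to u$, are sent identically to $F(u)$;
\item $u\to u\to v$ and $u\to v\to v$ are sent identically to $F(u\to v)$.
\end{itemize}
The remaining structure maps are the open embeddings $F(u\to v)\hookrightarrow F(u)$ already present in $F$. Hence any cocone on $F$ extends uniquely to a cocone on $\F$, which gives the isomorphism of colimits. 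One must also check that the assignments in Definition \ref{def:ext2simp} actually form a functor, i.e.\ that the composites through degenerate simplices commute; this reduces to the commutativity already present in $F$.
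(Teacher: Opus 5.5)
Your proposal is correct and is the paper's argument made explicit. The paper identifies $\Spec\circ\F^2_\cU$ with $\F^\cU_{\mathrm{can}}$, using that all $U_i$, $U_{ij}$, $U_{ijk}$ are affine by semi-separatedness as in Proposition \ref{prop:fispcssalascompschemes}, and then invokes Observation \ref{obs:extendediso} and Lemma \ref{lemma:Fcan}; your cocone-extension argument supplies the check of Observation \ref{obs:extendediso} that the paper leaves to the reader.

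One point you could tighten, which the paper also glosses over: the loops and degenerate $2$-simplices give $\Gamma(\G(\cU)_\bullet)$ parallel arrows, so it is not literally the poset $P_{\G(\cU)_\bullet}$ on which $F_X$ is defined. This is harmless, because $\F^\cU_{\mathrm{can}}$ sends parallel arrows to the same map and the quotient $\Gamma(\G(\cU)_\bullet)\to P_{\G(\cU)_\bullet}$ is bijective on objects and full, so the two colimits coincide.
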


\subsection{The category $\schtwo$ and weak equivalences}
We want to localize the category $\schtwo$ at a class of morphisms, containing all isomorphisms
and closed under composition, that we call, with a small abuse of terminology, {\it a set of weak equivalences}.

\begin{definition}
Let us define the set of $\cW$ in $\schtwo$
as the set of morphisms $\psi=(f,\varepsilon)$ such that the map $\varepsilon^\flat$
(Obs. \ref{rmk:schtwowe})
is an isomorphism and $\Spec^\cC(\psi)$ is affine. 
As one can readily check $\cW$ is a set of morphisms containing all isomorphisms
and closed under composition, hence $\cW$ is a set of weak equivalences. We shall refer to its elements as {\it weak equivalences}.
\end{definition}
We now record in the following proposition some useful facts concerning the
set of weak equivalences $\cW$.

\begin{proposition}\label{prop:techschtwo1}
The following are true.
\begin{itemize}

\item[1)] The functor $\Spec^\cC$ sends weak equivalences to isomorphisms of schemes. 
\item[2)] If $A=(\cA,F_A)\in\schtwo$ and $\Spec^\cC(\cA,F_A)=: S$, we have that $\cU_A:=\lbrace\Spec(F_A(p))\rbrace_{p\in\cA_0}$ is an affine open cover of $S$ and we have a canonical weak equivalence $i_A=(f,\varepsilon):(\cU_A)^2_{\mathrm{sch}}\hookrightarrow A$ such that $f$ is an inclusion of 2-dimensional semisimplicial sets and $\varepsilon$ is the identity.
\item[3)] Using the notation of 2), let $\cV=\{ V_j\}_{j\in J}$ be a finite covering by open affine subschemes of $S$ that is a refinement of $\cU_A$. Assume that for all $p\in\cA_0$ there exist a subset $J_p\subseteq J$ such that $\{ V_j\}_{j\in J_p}$ is a covering of $\Spec(F_A(p))\subseteq S$ and the $J_p$ are a partition of $J$. Then there exists a canonical weak equivalence $(\cV)^2_{\mathrm{sch}}\rightarrow(\cU_A)^2_{\mathrm{sch}}$ that is an inclusion on the underlying 2-dimensional semisimplicial sets.
\end{itemize}
\end{proposition}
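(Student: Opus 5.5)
I will prove the three items in order; each reduces to unwinding the definitions of $\Spec^\cC$, $\Spec^S$ and the degenerate expansion, together with Lemma \ref{lemma:Fcan} and Observation \ref{obs:extendediso}.

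\emph{Item 1).} Let $\psi=(f,\varepsilon):A\to B$ be in $\cW$. The morphism $\Spec^\cC(\psi)$ is obtained from the morphism of paraschematic spaces $(|f|,\varepsilon^\flat)$ of Obs. \ref{rmk:schtwowe} by applying $\Spec^S$. Set $S_A=\Spec^\cC(A)$ and $S_B=\Spec^\cC(B)$. For each vertex $q$ of $\cB$, the open affine $\Spec F_B(q)\subseteq S_B$ has preimage in $S_A$ equal to the colimit of $\Spec F_A$ over $|f|^{-1}(U_q)$. Since $\Spec^\cC(\psi)$ is affine, this preimage is affine. Its ring of global sections is $(|f|_\ast\cO_A)(U_q)$, which equals $\cO_B(U_q)=F_B(q)$ because $\varepsilon^\flat$ is an isomorphism. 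Hence the restriction of $\Spec^\cC(\psi)$ over $\Spec F_B(q)$ is an isomorphism of affine schemes. These opens cover $S_B$, so $\Spec^\cC(\psi)$ is an isomorphism.

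The main point to verify here is that the global sections of the colimit over $|f|^{-1}(U_q)$ are computed by $\cO_A$. I will check this using that $\Spec^S$ is a colimit of an open-embedding diagram (Prop. \ref{prop:gluingdatsimp}), so sections on it form the corresponding limit of rings, which is exactly $\cO_A(|f|^{-1}U_q)$.

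\emph{Item 2).} Write $A$ as the degenerate expansion of $(G_\bullet,F)$. By Obs. \ref{obs:extendediso} and Prop. \ref{prop:gluingdatsimp}, $S$ is the gluing of the $\Spec F_A(p)$, $p\in\cA_0$, so $\cU_A$ is an affine open cover. The graph $G(\cU_A)$ has an edge exactly where the intersection is nonempty. By condition 2 of Def. \ref{def:gluingfunctor}, every such nonempty intersection comes from an edge of $G$, and every triple intersection comes from a 3-clique, possibly an empty one. This gives an inclusion $\G(\cU_A)_\bullet\hookrightarrow\cA$ under which $\F^2_{\cU_A}$ is the restriction of $F_A$, so $\varepsilon=\mathrm{id}$. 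The delicate point is edges of $G$ with empty intersection; these are not in $G(\cU_A)$, but $F_A$ assigns them the zero ring. Consequently $\varepsilon^\flat$ is an isomorphism on basic opens, since missing simplices carry zero rings, exactly as in the remark on $\widetilde{S}^2_\cU$. Finally $\Spec^\cC(i_A)$ is the identity of $S$, which is affine.

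\emph{Item 3).} Send each vertex $V_j$ to the vertex $p$ with $j\in J_p$; this is well defined because the $J_p$ partition $J$. Edges and 2-simplices go to the corresponding edges and simplices, or to degenerate simplices when indices coincide. This is where the degenerate expansion is needed. The natural transformation is given by the restriction maps $\cO_S(U_{\ast})\to\cO_S(V_{\ast})$. The map on schemes is $\mathrm{id}_S$, hence affine. For $\varepsilon^\flat$, I need $\cO_S(U_p)\cong$ the sections of $\cO$ over the preimage of $U_p$. This follows from the sheaf property on $S$ for the cover $\{V_j\}_{j\in J_p}$, because that preimage's $\Spec^S$ is $U_p$; the same argument applies on the edge and triangle opens. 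I also expect that ``inclusion'' should be read up to this identification of the vertex map.
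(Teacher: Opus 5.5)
Your treatment of 1) and 2) follows the paper's proof. In 1) you work over the vertex opens $\Spec F_B(q)$, use that $\Spec^\cC(\psi)$ is affine, and compare global sections via $\varepsilon^\flat$. In 2) you embed $\G(\cU_A)_\bullet$ into $\cA$, and your remark that the omitted simplices carry the zero ring correctly fills in what the paper calls trivial.

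In 1), though, the step with real content is not computing sections over the colimit. It is identifying $\Spec^\cC(\psi)^{-1}(\Spec F_B(q))$ with the colimit over $|f|^{-1}(U_q)$, because a vertex $a$ with $f(a)\neq q$ may have part of $\Spec F_A(a)$ lying over $\Spec F_B(q)$. The paper also asserts this without proof, and it does not follow from $\varepsilon^\flat$ on vertex stars alone. Take $B$ the degenerate expansion of an edge $q'\to q$ with $F_B(q')=R$ and $F_B(q)=F_B(q'\to q)=R_g\neq 0$. Take $A$ on two isolated vertices with rings $R$ and $R_g$, sent to $q'$ and $q$ by identities. Then $\Spec^\cC$ gives the affine map $\Spec R\sqcup\Spec R_g\to\Spec R$. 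Here $\varepsilon^\flat$ is an isomorphism on both vertex stars, yet the map is not an isomorphism; $\varepsilon^\flat$ fails only on the star of the edge.

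The genuine gap is the last sentence of 3). Your sheaf-property argument is right on vertex stars, but ``the same argument applies on the \dots\ triangle opens'' is false.
For a $2$-simplex $y$ of $\G(\cU_A)_\bullet$ one has $U_y=\{y\}$, and $|r|^{-1}(U_y)=r^{-1}(y)$ consists of maximal points. It is therefore a disjoint union of open singletons, so $\bigl(|r|_\ast\cO_{(\cV)^2_{\mathrm{sch}}}\bigr)(U_y)=\prod_{x\in r^{-1}(y)}\F^2_\cV(x)$ is a bare product, and the sheaf property of $\cO_S$ never enters.

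Concretely, take $y=(p\to p\to p)$. Its fibre contains $(j\to j\to j)$ for every $j\in J_p$. For each $j<k$ in $J_p$ with $V_j\cap V_k\neq\emptyset$ it also contains both $(j\to j\to k)$ and $(j\to k\to k)$, each with ring $\cO_S(V_j\cap V_k)$. The map $\varepsilon^\flat(U_y)$ out of $\cO_S(U_p)$ has equal components on these two factors, so it is not surjective as soon as two of the $V_j$, $j\in J_p$, meet. Nondegenerate $2$-simplices fail in the same way when the triple intersections $V_j\cap V_{j'}\cap V_{j''}$ covering $U_p\cap U_{p'}\cap U_{p''}$ overlap.

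Hence, with $\cW$ as literally defined, the refinement map is a weak equivalence only if, for each $p$, the $V_j$ with $j\in J_p$ are pairwise disjoint. The paper's proof of 3) only constructs $r$ and $\varepsilon_r$ and never checks $\varepsilon^\flat$. Its collapse map $\pi$ in the RMS3 example fails at $(u\to u\to u)$ in exactly the same way. So the defect lies in the formulation, not just in your write-up: one needs either a weaker notion of weak equivalence (with 1) re-proved for it) or the disjointness hypothesis.

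What your argument does give is that $\varepsilon^\flat$ is an isomorphism on stars of vertices and of edges. For a nondegenerate edge $p\to p'$ you need one extra step. The cover $\{V_j\cap V_{j'}\}$ of $U_p\cap U_{p'}$ has overlaps $V_j\cap V_{j'}\cap V_k\cap V_{k'}$ with $j\neq k$, $j'\neq k'$, and agreement there is obtained only by passing through the triangles $\{j,k,j'\}$ and $\{k,j',k'\}$.

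Two smaller points on 3). You are right that $r$ is not injective on vertices, so ``inclusion'' cannot be meant literally. Also, $r$ is semisimplicial only if $J$ is ordered blockwise, compatibly with the order on $\cA_0$. Otherwise an edge from $V_j$ to $V_{j'}$ with $j\in J_p$, $j'\in J_{p'}$ and $p>p'$ has no admissible image.
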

\begin{proof}

1) Consider a weak equivalence $\psi=(f,\varepsilon):A=(\cA,F_A)\rightarrow B=(\cB, F_B)$. Denote  $\Spec^\cC(A)=: X$, $\Spec^\cC(B)=: Y$ and consider the paraschematic spaces $(A(P_\cA),\cO_A)$ and $(A(P_\cB),\cO_B)$ associated to $A$ and $B$ as in Observation \ref{rmk:schtwowe} (2). We want to prove that $\Spec^\cC(\psi):X\rightarrow Y$ is an isomorphism. As isomorphisms are local on target (see \cite{GW} Appendix C for this notion), it suffices to show that for all $v\in\cB_0$ the morphism $\Spec^\cC(\psi)^{-1}(\Spec(\cO_B(U_v))\rightarrow \Spec(\cO_B(U_v))$ is an isomorphism. By definition of weak equivalence, we know that $\Spec^\cC(\psi)$ is affine. As a consequence $\Spec^\cC(\psi)^{-1}(\Spec(\cO_B(U_v))$ is an open affine subscheme of $X$ so $\Spec^\cC(\psi)^{-1}(\Spec(\cO_B(U_v))\rightarrow \Spec(\cO_B(U_v))$ is a morphism of affine schemes and therefore to check that it is an isomorphism it suffices to show that its induced map on global sections is a ring isomorphism. First of all, $\Spec^\cC(\psi)^{-1}(\Spec(\cO_B(U_v))\cong\Spec(\cO_A(|f|^{-1}(U_v)))$. Moreover, as $\psi$ is a weak equivalence, $\varepsilon^{\flat}$ is an isomorphism. Therefore, we get the required isomorphism.\\
We shall now prove 2). By definition, $\cA\cong \G_\bullet$ for some graph $G$ and $\Spec\circ F_A$ factors through the degenerate expansion of a gluing functor
As a consequence $\cU_A$ is a covering by open affine subschemes of $\Spec^\cC(A)$. Here a comment should be made: strictly speaking, a covering by open embeddings of a scheme is a set and therefore, as it could happen that the open embeddings $\Spec(F_A(p))\hookrightarrow S$ and $\Spec(F_A(q))\hookrightarrow S$ are equal (so in particular $\Spec(F_A(p))=\Spec(F_A(q))$), we end up with a covering of S
that a priori does not have an element for each $p\in\cA_0$. This is not substantial as in this case we can for example replace one of the schemes involved with another one isomorphic to it.

Consider now the semisimplicial set $G(\cU_A)_\bullet$. By construction we have a $\varphi:G(\cU_A)_0\xrightarrow{\cong} G_0$. Consider $u,v\in G(\cU_A)_0$ and assume that $\varphi(u),\varphi(v)\in G_0$ are connected by and edge $e\in G_1$ such that $F_A(e)$ is not trivial. Then, as $\Spec\circ F_A$ factors through the degenerate expansion of a gluing functor we have that $u$ and $v$ are connected by an edge as well. Conversely, if $F_A(e)$ is trivial or $\varphi(u)$ and $\varphi(v)$ are disconnected then $u$ and $v$ are disconnected. It follows that $\varphi$ can be extended to an inclusion of graphs $G(\cU_A)\subseteq G$ and therefore $G(\cU_A)_\bullet$ and $\G(\cU_A)_\bullet$ are sub semisimplicial sets of $G_\bullet$ and $\G_\bullet$ respectively. Define $f$ to be the inclusion $\G(\cU_A)_\bullet\subseteq\G_\bullet$ just obtained. For a given $p\in \G(\cU_A)_\bullet$ we define the ring morphism $\varepsilon (p):(F_A\circ\Gamma(f))(p)\rightarrow \cO_S(U_p)$ to be the one induced by the isomorphism $\Spec(F_A(p))$ and the open subscheme $U_p$ corresponding to it in $S$ (note that this definition does not require choices). As $(f,\varepsilon)$ is trivially a weak equivalence this completes the proof of 2).\\We shall now prove 3) by building explicitly a canonical weak equivalence $(r,\varepsilon_r):(\cV)^2_{\mathrm{sch}}\rightarrow(\cU_A)^2_{\mathrm{sch}}$. To ease the notation, let us denote as $\{U_i\subseteq S\}_{i\in I=\cA_0}$ the covering $\cU_A$ and for each $i\in I$ let us denote as $\{V_{ij}\subseteq U_i\}_{j\in J_i}$ the subcover of $U_i$ as in the hypothesis. The fact that $\cV$ is a refinement of $\cU_A$ together with the fact that each $V_{ij}$ is a subscheme of a particular
$U_i$ allows us to define a graph morphism $\tilde{r}:G(\cV)\rightarrow G(\cU_A)$ in the obvious way: $\tilde{r}(V_{ij}):= U_i$ and $\tilde{r}(V_{ij}\cap V_{lk})=U_i\cap U_l$ (here we are denoting with the intersections their corresponding edges if the considered intersections are not empty). This map clearly extends to a map $r:\G(\cV)_\bullet\rightarrow \G(\cU_A)_\bullet$. Finally, we construct the required $\varepsilon_r$ considering the inclusions of affine schemes $V_{ij}\subseteq U_i$, $V_{ij}\cap V_{lk}\subseteq U_i\cap U_l$ and $V_{ij}\cap V_{lk}\cap V_{mn}\subseteq U_i\cap U_l\cap U_m$.
\end{proof}

\begin{definition}
Let $S$ be a scheme and $\cU=\{ U_k\}_{k\in K}$ be a finite cover of it by open but possibly not affine subschemes.
We say that a finite refinement $\cV=\{ V_j\}_{j\in J}$ of $\cU$ by affine schemes is a \emph{complete affine subcover} of $\cU$ if for all $k\in K$ there exists a subset $J_k\subseteq J$ such that $\{ V_j\}_{j\in J_k}$ is a covering of $\Spec(U_k)\subseteq S$ and the $J_k$ are a partition of $J$.
\end{definition}

\begin{definition}\label{cor:techschtwo2}
Let $S$ be a scheme and $\cV=\{V_j\subseteq S\}_{j\in J}$, $\cU=\{U_i\subseteq S\}_{i\in I}$ two finite open affine coverings of it. We define the covering $\cV\times_S\cU$ of $S$ as 
$$\cV\times_S\cU:=\{U_i\times_S V_j\subseteq S\}_{(i,j)\in I\times J}$$
\end{definition}

\begin{observation}
Notice that, as $S$ is semiseparated, the covering $\cV\times_S\cU$ is a complete affine subcover of both $\cU$ and $\cV$. Therefore, mimicking the proof of Proposition \ref{prop:techschtwo1} 3) we can define weak equivalences $p_{\cV}:(\cV\times_S\cU)^2_{\mathrm{sch}}\rightarrow (\cV)^2_{\mathrm{sch}}$, $p_{\cU}:(\cV\times_S\cU)^2_{\mathrm{sch}}\rightarrow (\cU)^2_{\mathrm{sch}}$.
\end{observation}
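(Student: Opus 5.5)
The statement to prove has two parts. First, $\cV\times_S\cU$ is a complete affine subcover of both $\cU$ and $\cV$. Second, this produces weak equivalences $p_\cV$ and $p_\cU$.

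\textbf{Affineness.} Each member $U_i\times_S V_j$ is the intersection $U_i\cap V_j$ of two affine opens. It is affine because $S$ is semi-separated: the diagonal $S\to S\times S$ is affine, and $U_i\cap V_j$ is the preimage of the affine $U_i\times V_j$ under it.

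\textbf{Refinement and the partition property for $\cU$.} Set $J_i:=\{i\}\times J\subseteq I\times J$. The sets $J_i$ partition $I\times J$. Since $\cV$ covers $S$, we get $\bigcup_{j}U_i\cap V_j=U_i$, so $\{U_i\times_S V_j\}_{(i,j)\in J_i}$ covers $U_i$. Each member also lies in $U_i$, so the cover refines $\cU$. Taking $J_j:=I\times\{j\}$ gives the same conclusion for $\cV$.

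Some intersections $U_i\cap V_j$ may be empty. Following Remark \ref{rmk:equalsubschemes}, I keep them as vertices indexed by $I\times J$. If the definition of $(\cdot)^2_{\mathrm{sch}}$ requires nonempty members, I would instead discard the empty pieces. The partition $J_i$ then restricts to the nonempty indices, and the covering property is unaffected.

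\textbf{The maps.} Now $\cU=\cU_A$ with $A=(\cU)^2_{\mathrm{sch}}$. By Observation \ref{obs:revschtwo} and Proposition \ref{prop:techschtwo1}(2), whose canonical inclusion here is essentially the identity, the hypotheses of Proposition \ref{prop:techschtwo1}(3) are met for the refinement $\cV\times_S\cU$ of $\cU$. Its proof then gives $p_\cU$ explicitly. On graphs, $\tilde r$ sends the vertex $(i,j)$ to $i$. It sends an edge $(U_i\cap V_j)\cap(U_l\cap V_k)\ne\emptyset$ to $U_i\cap U_l$, which is an edge or a loop, hence lies in the degenerate expansion. This extends to $r:\G(\cV\times_S\cU)_\bullet\to\G(\cU)_\bullet$, and $\varepsilon_r$ is given by the restriction maps along the inclusions of intersections. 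The map $p_\cV$ is defined symmetrically using $(i,j)\mapsto j$.

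\textbf{Main obstacle: checking these are in $\cW$.} Proposition \ref{prop:techschtwo1}(3) asserts this, but I would recheck the two conditions directly.

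For affineness of $\Spec^\cC(p_\cU)$: under the identifications of Observation \ref{obs:revschtwo}, $\Spec^\cC(p_\cU)$ is the identity of $S$, hence an affine morphism.

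For $\varepsilon^\flat$ being an isomorphism: it suffices to check on the basic opens $U_p$ of $A(P_{\G(\cU)_\bullet})$. The preimage $|r|^{-1}(U_p)$ is the open subset of simplices lying over $p$, and the sections of $\cO$ on it are computed as a limit over the corresponding gluing subdiagram. By the sheaf property of $\cO_S$ on the cover $\{U_i\cap V_j\}_j$ of the relevant intersection, this limit is $\cO_S(U_p)$.

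The delicate points in this check are the degenerate simplices and the handling of empty pieces. I expect this verification to be the hardest part.
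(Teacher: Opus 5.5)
\textbf{The verification of $\varepsilon^\flat$ fails at 2-simplices.} Your first steps match the paper's argument, which consists only of these steps: affineness of $U_i\cap V_j$ from semi-separatedness, the partitions $\{i\}\times J$ and $I\times\{j\}$, and the construction of $r,\varepsilon_r$ copied from Proposition \ref{prop:techschtwo1} 3). The gap is in your extra check that $p_\cU\in\cW$, namely the claim that $\varepsilon^\flat$ is an isomorphism on every basic open $U_p$.

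Your sheaf argument does work when $p$ is a vertex or an edge. The relevant overlaps (e.g.\ $U_i\cap V_j\cap V_k$) are recorded, directly or by transitivity, by edges and 2-simplices lying over the star of $p$.

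It breaks when $p$ is a 2-simplex. Such a $p$ is maximal in $P_{\G(\cU)_\bullet}$, so $U_p=\{p\}$. Hence $|r|^{-1}(U_p)$ is a finite set of maximal points, each of which is open, and they are pairwise disjoint. Therefore $(|r|_\ast\cO)(U_p)=\prod_{r(\tau)=p}\cO_S(W_\tau)$, where $W_\tau$ is the triple intersection attached to $\tau$. Nothing is imposed on the overlaps $W_\tau\cap W_{\tau'}$, so the sheaf property of $\cO_S$ never enters. Concretely, take $S=\Spec k$, $\cU=\{S,S,S\}$, $\cV=\{S,S\}$. Over $p=(1\to2\to3)$ lie the eight 2-simplices $(1,a)\to(2,b)\to(3,c)$, and $\varepsilon^\flat(U_p)$ is the diagonal $k\to k^8$.

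\textbf{This is not repairable by more care.} The problem is not about degenerate simplices or empty pieces: with $\cW$ as literally defined, $p_\cU$ is not a weak equivalence. The paper's appeal to Proposition \ref{prop:techschtwo1} 3) hides the same problem, since that proof never checks the $\varepsilon^\flat$ condition either. What your argument does establish is that $\Spec^\cC(p_\cU)=\mathrm{id}_S$, and that $\varepsilon^\flat(U_p)$ is an isomorphism for $p$ a vertex or an edge. Making the statement true requires weakening the definition of $\cW$; note that the proof of Proposition \ref{prop:techschtwo1} 1) only uses $\varepsilon^\flat$ over vertex stars.

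\textbf{The phrase ``$p_\cV$ is defined symmetrically'' also hides a problem.} A semisimplicial map must respect heads and tails. So both $(i,j)\mapsto i$ and $(i,j)\mapsto j$ would have to be monotone along edges for the single total order chosen on $I\times J$. Suppose $U_1\cap U_2\cap V_1\cap V_2\neq\emptyset$ and both covers carry the order $1<2$. Then the edge joining $(1,2)$ and $(2,1)$ cannot be oriented compatibly with both projections. So $p_\cU$ and $p_\cV$ cannot literally share the source $(\cV\times_S\cU)^2_{\mathrm{sch}}$, and the paper's sketch has the same issue.
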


\subsection{Schematic Localization of $\schtwo$}
We would like to localize the category $\schtwo$ at weak equivalences.
One of the classical ways to do this is to prove that the given
class of weak equivalences is either a
right or a left multiplicative system. This is not how we will proceed,
but, since we modify such construction, let us briefly
recall the definition of right multiplicative system,
see \cite[\href{https://stacks.math.columbia.edu/tag/04VC}{Tag 04VC}]{SP}.

\begin{definition}\label{def:rms}
Let $\cC$ be a category. A set of arrows $\cS$ of $\cC$ is called a
\textit{right multiplicative system} if it has the following properties:
\begin{itemize}
\item [RMS1)] The identity of every object of $\cC$ is in $\cS$ and the composition of two composable elements of $\cS$ is in $\cS$.
\item [RMS2)] For every solid diagram
$$
\xymatrix{X\ar@{.>}[r] \ar@{.>}[d]_t & Y\ar[d]^s\\
Z\ar[r] & W\\
}
$$
with $s \in \cS$, there exists an $X$ (not unique in general), so that
the diagram can be completed to a commutative dotted square with $t \in \cS$.
\item [RMS3)] For every pair of morphisms $f,g:X \lra Y$ and $s\in \cS$ with source $Y$ such that $s \circ f=s \circ g$  there exists a $t \in \cS$ with target $X$, such that $f\circ t=g \circ t$.
\end{itemize}
\end{definition}

When the set of weak equivalences is a right (or left) multiplicative system, the usual calculus of fractions can be used to construct an explicit model of the desired localization (see \cite[\href{https://stacks.math.columbia.edu/tag/04VB}{Tag 04VB}]{SP} or the very nice exposition in \cite{Borceux}).

In our case, however, the set of weak equivalences $\cW$ fails to be a multiplicative system as we see in the next example, showing an explicit counterexample to property RMS3. 

\begin{example}
Let be $R_X$ a commutative ring and let $X:=\Spec(R_X)$. Consider objects $A=(\cA,F_A)$ and $B=(\cB,F_B)$ where $\cA$ and $\cB$ are the degenerate expansions of $G_\bullet$ and $H_\bullet$ where $G$ is the graph consisting of a single vertex $u$ and $H$ is a graph $a\rightarrow b$ consisting of two vertices joined by a single edge. $F_A$ and $F_B$ are defined to be the constant functors with value $R_X$ on the category of simplices of $\cA$ and $\cB$ respectively. Note that $\Spec^\cC(A)$ and $\Spec^\cC(B)$ are both isomorphic to $X$. Consider the two morphisms $\psi_a=(f_a, \varepsilon_a),\psi_b=(f_b, \varepsilon_b):A\rightarrow B$ having $f_a$ and $f_b$ defined extending the unique inclusions sending $u$ to $a$ and $b$ respectively and where $\varepsilon_a$ and $\varepsilon_b$ are obtained using the identity $R_X\rightarrow R_X$.  Consider the map $\pi:(f_\pi,\varepsilon_\pi):B\rightarrow A$, where $f_\pi$ is the semisimplicial set map collapsing the two distinct vertices of $\cB$ to the single vertex of $u$ and $\varepsilon_\pi$ is defined using the identity of $R_X$. $\pi$ is a weak equivalence and $\pi\circ\psi_a=\pi\circ\psi_b$
but it is not possible to find a semisimplicial set $\mathcal{Q}$
and a morphisms $f_s:\mathcal{Q}\rightarrow\G(\cU)_\bullet$ such that $f_a\circ f_s=f_b\circ f_s$, so we get that that RMS3 is not satisfied by the collection of weak equivalences. However, one can check that $\Spec^\cC(\psi_a\circ\pi)=\Spec^\cC(\psi_b\circ\pi)$. As $\Spec^\cC(\pi\circ\psi_a)=\Spec^\cC(\pi\circ\psi_b)$,
we then observe that in our example a modified version of property RMS3 holds at the level of the scheme
morphisms induced by morphisms in $\schtwo$. This suggests a new definition
of right multiplicative system, with a weaker form of RMS3, that
as we shall see, allows for localization.
\end{example}

The previous example motivates the following definitions.

\begin{definition}
We say that two morphisms $\psi,\psi':A\rightarrow B$ in $\schtwo$ are \emph{schematic equal} if $\Spec^\cC(\psi)=\Spec^\cC(\psi')$. We say that a diagram in $\schtwo$ is \emph{schematic commutative} if all the morphisms appearing in it, having the same source and target, are schematic equal.
\end{definition}

By definition, a commutative diagram in $\schtwo$ is also schematic commutative. We will need some useful properties of schematic equal morphisms. We start with a technical lemma.

\begin{lemma}\label{lemma:factortech}
Every morphism $\psi=(f,\varepsilon):A=(\cA,F_A)\rightarrow B=(\cB,F_B)$ in $\schtwo$ restricts to a morphism $\psi'=(f',\varepsilon'):(\cU_A)^2_{\mathrm{sch}}\rightarrow (\cU_B)^2_{\mathrm{sch}}$ such that $\psi\circ i_A=i_B\circ\psi'$.
\end{lemma}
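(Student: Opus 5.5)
The plan is to recognise $(\cU_A)^2_{\mathrm{sch}}$ as a sub-object of $A$ through $i_A$, then show that $f$ maps this sub-object into the image of $i_B$. By Proposition \ref{prop:techschtwo1} 2), the underlying map of $i_A$ is an inclusion $\G(\cU_A)_\bullet\subseteq\cA$ and its natural transformation is the identity. More precisely, it is the canonical identification of $F_A(p)$ with $\cO_{S_A}(U_p)$, where $S_A=\Spec^\cC(A)$. The same holds for $i_B$ with $S_B=\Spec^\cC(B)$. So I would define $f'$ as the restriction of $f$ to $\G(\cU_A)_\bullet$ and $\varepsilon'$ as the restriction of $\varepsilon$ to $\Gamma(\G(\cU_A)_\bullet)$. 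The first task is then to check that $f$ carries $\G(\cU_A)_\bullet$ into $\G(\cU_B)_\bullet\subseteq\cB$.

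On vertices nothing needs checking: $\G(\cU_A)_0=\cA_0$ and $\G(\cU_B)_0=\cB_0$, because both covers are indexed by all vertices. Next take an edge $e:u\to v$ of $\G(\cU_A)_\bullet$. Then $F_A(e)$ is a nontrivial ring, since $U_u\cap U_v\neq\emptyset$ in $S_A$. The component $\varepsilon(e):F_B(f(e))\to F_A(e)$ is a ring morphism, so its target being nonzero forces $F_B(f(e))\neq 0$. Hence $f(e)$ is an edge of $\cB$ whose ring is nontrivial. By the description in the proof of Proposition \ref{prop:techschtwo1} 2), such an edge lies in $G(\cU_B)$, or in the degenerate part if $f(e)$ is a loop. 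A 2-simplex of $\G(\cU_A)_\bullet$ has all its faces in $\G(\cU_A)_\bullet$, so the faces of its image lie in $\G(\cU_B)_\bullet$. Its image is then a 3-clique of $G(\cU_B)$ or a degenerate simplex, and in either case it lies in $\G(\cU_B)_\bullet$. Simplices of $\G(\cU_B)$ whose triple intersection is empty are allowed, since Definition \ref{def:cliquesimp} keeps all 3-cliques.

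For the identity $\psi\circ i_A=i_B\circ\psi'$, the underlying maps of semisimplicial sets agree by construction. The composite transformations are compositions of $\varepsilon$ with the canonical identifications carried by $i_A$ and $i_B$, and these agree pointwise on each simplex. Naturality of $\varepsilon'$ is inherited from $\varepsilon$, since $\Gamma(\G(\cU_A)_\bullet)$ is a full subcategory of $\Gamma(\cA)$. Since $\Gamma(\G(\cU_A)_\bullet)$ and $\Gamma(\G(\cU_B)_\bullet)$ are the relevant index categories, the square commutes strictly.

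I expect the main obstacle to be the edge case. The ring $F_A(e)$ is nonzero exactly when the pullback is nonempty, and this has to be tied to the actual construction of $G(\cU_A)$ through the open subschemes of $S_A$. One also has to handle edges $e$ that $f$ collapses to a loop at a single vertex, where $f(e)$ lies in the degenerate part and so lies in $\G(\cU_B)_\bullet$ automatically. The remaining work is to phrase the identification of $(\cU_A)^2_{\mathrm{sch}}$ with its image without choosing a cleavage, following Remark \ref{rmk:equalsubschemes}.
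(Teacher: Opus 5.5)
Your proposal is correct and follows essentially the paper's argument: restrict $f$ and $\varepsilon$ to $\G(\cU_A)_\bullet$, and use that a ring map $\varepsilon(e):F_B(f(e))\to F_A(e)$ into a nonzero ring forces $F_B(f(e))\neq 0$, so edges with nontrivial ring go to edges of $G(\cU_B)$ or to loops. Your face-closure argument for 2-simplices is slightly more careful than the paper's one-line ``the same holds true for 2 dimensional simplices'', because it also covers 3-cliques of $G(\cU_A)$ with empty triple intersection, whose ring is trivial.
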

\begin{proof}
Note that if for a one dimensional simplex $e\in\cA$ we have that $F_A(e)$ is not trivial (i.e. $e$ belongs to the subsemisimplicial set $\G(\cU_A)_\bullet\subseteq\cA$), then $F_B(f(e))$ must be not trivial as well (i.e. $f(e)$ belongs to the subsemisimplicial set $\G(\cU_B)_\bullet\subseteq\cB$), as there can be no ring homomorphisms from the trivial ring to a non trivial ring. The same holds true for 2 dimensional simplices. As a consequence, $f$ restricts to a map $\G(\cU_A)_\bullet\rightarrow\G(\cU_B)_\bullet$ and, as a consequence, so is $\varepsilon$.
\end{proof}
\begin{remark}\label{obs:scheqwelldef}
Explicitly note that if $\psi=(f,\varepsilon_f),\psi'=(g,\varepsilon_g):A\rightrightarrows B$ are schematic equal then there exists an isomorphism $\varphi:|f|_\ast\cO_A\rightarrow |g|_\ast\cO_A$ such that $\varepsilon_g^\flat=\varphi\circ\varepsilon_f^\flat$. In addition, if $A$ and $B$ are of the form $(\cU)^2_{\mathrm{sch}}$ and $(\cV)^2_{\mathrm{sch}}$ for two given open affine coverings of two schemes $X$ and $Y$ we can take $\varphi$ to be the identity.
\end{remark}

We now prove an important technical proposition that can be interpreted as the fact that our weak equivalences satisfy a modified version of RMS2 and RMS3 where all the equalities required on the morphisms by these two conditions are relaxed to schematic equalities.

\begin{lemma}\label{prop:RMS2}
The following holds true:
\begin{itemize}
\item[1)]Let be $f:A\rightarrow B$ a weak equivalence and let be $g:C\rightarrow B$ a morphism of $\schtwo$. Then there exist $Q\in\schtwo$, a weak equivalence $f':Q\rightarrow C$ and a morphism $g':Q\rightarrow A$ such that $g\circ f'$ and $f\circ g'$ are schematic equal. In addition, if $g$ is a weak equivalence we can construct $g'$ in such a way that $g'$ is a weak equivalence as well.
\item[2)] Given two maps $f,g:A\rightrightarrows B$, if there exists a weak equivalence $s:B\rightarrow Q$ in $\schtwo$ such that $s\circ f$ is schematic equal to $s\circ g$ then there exists a weak equivalence $t:Q\rightarrow A$ such that $f\circ t$ and $g\circ t$ are schematic equal.
\end{itemize}
\end{lemma}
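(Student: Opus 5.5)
The plan is to reduce everything to objects of the form $(\cU)^2_{\mathrm{sch}}$ coming from affine covers of the schemes $\Spec^\cC(-)$. Then build the required squares from fibre-product covers $\cV\times_S\cU$ together with the canonical weak equivalences of Proposition \ref{prop:techschtwo1}. Schematic equality only has to be checked after applying $\Spec^\cC$. Since $\Spec^\cC$ sends weak equivalences to isomorphisms (Proposition \ref{prop:techschtwo1} 1), every verification becomes an equality of morphisms of schemes, which can be tested on an affine cover of the source.

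For 1), put $X=\Spec^\cC(A)$, $Y=\Spec^\cC(B)$, $Z=\Spec^\cC(C)$. Then $\Spec^\cC(f)\colon X\to Y$ is an isomorphism and $\Spec^\cC(f)$ is affine. Consider the composite $h:=\Spec^\cC(f)^{-1}\circ\Spec^\cC(g)\colon Z\to X$. Since $\Spec^\cC(f)$ is affine and $Y$ is semi-separated, the preimages $h^{-1}(\Spec F_A(p))$, $p\in\cA_0$, are open affine. Let $\cW_0$ be the cover of $Z$ given by $\cU_C\times_Z h^{-1}(\cU_A)$, i.e. by the sets $\Spec F_C(q)\cap h^{-1}(\Spec F_A(p))$. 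These are affine by semi-separatedness. Regarded as indexed by $(q,p)$, they form a complete affine subcover of $\cU_C$, so Proposition \ref{prop:techschtwo1} 3) gives a weak equivalence $(\cW_0)^2_{\mathrm{sch}}\to(\cU_C)^2_{\mathrm{sch}}$. Compose it with $i_C$ from Proposition \ref{prop:techschtwo1} 2) and call the result $f'$; the object $Q$ is $(\cW_0)^2_{\mathrm{sch}}$.

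Next define $g'\colon Q\to(\cU_A)^2_{\mathrm{sch}}\xrightarrow{i_A}A$. On simplicial sets it sends the vertex $(q,p)$ to $p$, and hence edges and triangles to the corresponding intersections, exactly as the map $\tilde r$ in the proof of Proposition \ref{prop:techschtwo1} 3). The ring maps $\varepsilon$ are induced by $h$ restricted to each $\Spec F_C(q)\cap h^{-1}(U_p)\to U_p$. Then $\Spec^\cC(g')=h$ and $\Spec^\cC(f')$ is the identity of $Z$ under the canonical identifications, so $\Spec^\cC(f\circ g')=\Spec^\cC(g\circ f')$. If $g$ is a weak equivalence, $h$ is an isomorphism. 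Each map $\Spec F_C(q)\cap h^{-1}(U_p)\to U_p$ is then an open immersion, and the pushforward of the structure sheaf along $|g'|$ is an isomorphism onto $\cO_A$. Check this on the basic opens of $A(P_\cA)$: the pieces over $U_p$ form a cover of $U_p$, and the sheaf condition glues them. $\Spec^\cC(g')=h$ is affine, so $g'$ is a weak equivalence.

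For 2), the statement as written has $s\colon B\to Q$ and $t\colon Q\to A$. I read it as asking for some object $Q'$ and a weak equivalence $t\colon Q'\to A$; the same-letter reuse appears to be a typo. Apply $\Spec^\cC$: since $\Spec^\cC(s)$ is an isomorphism, $\Spec^\cC(f)=\Spec^\cC(g)$ already. So $t=\mathrm{id}_A$, or more canonically $i_A$, works trivially. The real content is therefore in 1).

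The main obstacle is verifying that $g'$ is a genuine morphism of $\schtwo$. Its simplicial map must respect the degenerate expansion and the partition structure, and $\varepsilon$ must be natural. When $g$ is a weak equivalence, we must also check that $\varepsilon^\flat$ is an isomorphism on all of $A(P_\cA)$, including degenerate simplices and simplices of $\cA$ with trivial ring. I would handle the trivial ones via Lemma \ref{lemma:factortech}, where both sides are the zero ring.
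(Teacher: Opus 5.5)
Your route for 1) is the paper's. You pass to objects $(\cU)^2_{\mathrm{sch}}$, refine by a fibre-product cover, take $f'$ to be a refinement map as in Proposition \ref{prop:techschtwo1} 3), and define $g'$ by pulling back along $h=\Spec^{\cC}(f)^{-1}\circ\Spec^{\cC}(g)$. Two of your choices are fine simplifications. You use the cover $\{\Spec F_C(q)\cap h^{-1}(U_p)\}_{(q,p)}$ directly, instead of first choosing the paper's complete affine subcover $\cV^{\mathrm{aff}}$. You compose with $i_A$ and $i_C$, instead of the paper's ``we can assume''. For 2) you are right: once $\Spec^{\cC}(f)=\Spec^{\cC}(g)$, which the paper also derives, $t=\mathrm{id}_A$ works. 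The paper's construction of $(\cU_A\times_X\cV)^2_{\mathrm{sch}}$ there is superfluous.

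There is a minor error. $h^{-1}(\Spec F_A(p))$ need not be affine, because $\Spec^{\cC}(g)$ is an arbitrary morphism; think of $\mathbb{P}^1_k\to\Spec k$. That $\Spec^{\cC}(f)$ is affine is irrelevant, since it is an isomorphism. Your pieces are still affine, for a different reason: $\Spec F_C(q)$ is affine and $X$ has affine diagonal, so $\Spec F_C(q)\cap h^{-1}(U_p)=\Spec F_C(q)\times_X U_p$ is affine.

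The serious gap is the claim that $g'$ is a weak equivalence when $g$ is. Your gluing argument handles the opens $U_p$ for vertices $p$, and with a little \v{C}ech bookkeeping it also handles edges. It does not handle 2-simplices. For a 2-simplex $x$ one has $U_x=\{x\}$, and $|g'|^{-1}(U_x)$ is a discrete set of maximal simplices of $Q$. So $(|g'|_\ast\cO_Q)(U_x)$ is a bare product with no gluing condition.

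Here is a counterexample. Take $A=B=C=(\cU)^2_{\mathrm{sch}}$ and $f=g=\mathrm{id}$, with $\cU=\{U_a,U_b\}$ and $U_a\cap U_b\neq\emptyset$. Then four degenerate triangles of $Q$ lie over $x=(a\to a\to a)$. The map $\varepsilon^\flat(U_x)$ becomes $\cO(U_a)\to\cO(U_a)\times\cO(U_a\cap U_b)^{3}$, which is not surjective. So this step cannot be completed with $\cW$ as literally defined.

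The paper's ``one can check'' hides the same issue. It equally affects the refinement maps of Proposition \ref{prop:techschtwo1} 3), which both you and the paper use for $f'$; your $t=\mathrm{id}_A$ in 2) is unaffected. The argument does go through if weak equivalences are taken to be the $\psi$ with $\Spec^{\cC}(\psi)$ an isomorphism. Then $\Spec^{\cC}(g')=h$ is all that is needed.
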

\begin{proof}
We begin with the proof of 1). Denote $Y:=\Spec^\cC(B)$, $X:=\Spec^\cC(A)$, $Z:=\Spec^\cC(C)$. Because of Lemma \ref{lemma:factortech} we can assume that $A=(\cU_A)^2_{\mathrm{sch}}=:(\cA,F_A)$, $B=(\cU_B)^2_{\mathrm{sch}}=:(\cB,F_B)$ and $C=(\cU_C)^2_{\mathrm{sch}}=:(\cC,F_C)$. As $f$ is a weak equivalence, we know that $\Spec^\cC(f)$ is an isomorphism. Accordingly, $\Spec^\cC(f)(\cU_A)=\{\Spec^\cC(\Spec(F_A(q))\}_{q\in\cA_0}$ is a covering of $Y$ by open affine subschemes. As a consequence, $\cU_{g^{-1}\cU_A}:=\{\Spec^\cC(g)^{-1}(\Spec^\cC(\Spec(F_A(q)))\}_{q\in\cA_0}$ is an open (possibly non affine) covering of $Z$: let $\cV^{\mathrm{aff}}$ be a complete affine subcover of it. We define $Q:=(\cQ,F_Q):=(\cU_C\times_Z\cV^{\mathrm{aff}})^2_{\mathrm{sch}}$ and $f':=p_{\cU_C}$. By Def. \ref{cor:techschtwo2} $f'$ is a weak equivalence. We now need to define $g'=:(\tilde{g}',\varepsilon_{g'})$. We define $\tilde{g}'$ as the composition of the canonical maps $\G(\cU_C\times_Z\cV^{\mathrm{aff}})_\bullet\to\G(\cV^{\mathrm{aff}})_\bullet\to\G(\cU_{g^{-1}\cU_A})_\bullet\cong\G(\cU_A)_\bullet=\cA$. We define $\varepsilon_{g'}$ as follows. For every $a\in\Gamma(\G(\cU_C\times_Z\cV^{\mathrm{aff}})_\bullet)$, we define the map $\varepsilon_{g'}(a):F_A(\tilde{g}'(a))\rightarrow F_Q(a)$ as the ring homomorphism given by the global sections of the scheme morphism obtained composing the open embedding $\Spec(F_Q(a))\rightarrow \Spec^\cC(g)^{-1}(\Spec^\cC(\Spec(F_A(\tilde{g}'(a))))$ (recall that $\cV^{\mathrm{aff}}$ is a complete affine subcover of $\cU_{g^{-1}\cU_A}$) with the map $$\Spec^\cC(g)^{-1}(\Spec^\cC(\Spec(F_A(\tilde{g}'(a))))\rightarrow \Spec^\cC(\Spec(F_A(\tilde{g}'(a)))$$ obtained by taking the pullback of $\Spec^\cC(g)$ along the morphism $$\Spec(F_A(\tilde{g}'(a)))\hookrightarrow X\xrightarrow{f} Y$$ One can check that $g'$ is the required morphism and that it is a weak equivalence if $g$ is a weak equivalence.\\
We turn to the proof of 2). Denote $Y:=\Spec^\cC(B)$, $X:=\Spec^\cC(A)$, $Z:=\Spec^\cC(Q)$. Again, we can assume that $A=(\cU_A)^2_{\mathrm{sch}}=:(\cA,F_A)$ and that $B=(\cU_B)^2_{\mathrm{sch}}=:(\cB,F_B)$. Consider the covering by open (possibly non affine) subschemes $\cU_f:=\Spec^\cC(f)^{-1}(\cU_B)$ and $\cU_g:=\Spec^\cC(g)^{-1}(\cU_B)$ of $X$. As $s$ is a weak equivalence, $\Spec^\cC(s)$ is an isomorphism. The hypothesis implies $\Spec^\cC(s)\circ\Spec^\cC(f)=\Spec^\cC(s)\circ\Spec^\cC(g)$ so that $\Spec^\cC(s)=\Spec^\cC(f)$. As a consequence $\cU_f=\cU_g$: consider a complete affine subcover $\cV$ of it. Define $Q:=(\cU_A\times_X\cV)^2_{\mathrm{sch}}$ and $t:=p_{\cU_A}$. One can check that these are the desired object and morphism.
\end{proof}

\begin{definition}\label{def:srms}
A set of arrows $\cS$ of $\schtwo$ is called a \textit{schematic
right multiplicative system}, if it has the following properties:
\begin{itemize}
\item [sRMS1)] The identity of every object of $\cC$ is in $\cS$ and the composition of two composable elements of $\cS$ is in $\cS$.
\item [sRMS2)] For every solid diagram
$$
\xymatrix{X\ar@{.>}[r]^{t'} \ar@{.>}[d]_t & Y\ar[d]^s\\
Z\ar[r]^{s'} & W\\
}
$$
with $s \in \cS$, there exists an $X$ (not unique in general), so that
the diagram can be completed to a schematic commutative diagram with $t \in \cS$. If $s'\in\cS$ then one can complete the diagram in such a way that $t'\in\cS$ as well.
\item [sRMS3)] For every pair of morphisms $f,g:X \lra Y$ and $s\in \cS$ with source $Y$ such that $s \circ f$ is schematically equal to $ s \circ g$  there exists a $t \in \cS$ with target $X$, such that $f\circ t$ is schematically equal to $g \circ t$.
\end{itemize}
\end{definition}

By Lemma \ref{prop:RMS2} we have immediately the following result, which is the key for the localization of $\schtwo$ at $\cW$.

\begin{proposition}
The set $\cW$ of weak equivalences in $\schtwo$ is a schematic right multiplicative system.
\end{proposition}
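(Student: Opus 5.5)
The plan is to check the three axioms of Definition \ref{def:srms} one at a time for $\cS=\cW$. For sRMS2 and sRMS3 we will simply rewrite Lemma \ref{prop:RMS2} in the notation of the definition.

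\emph{sRMS1.} The definition of $\cW$ already records that $\cW$ contains all isomorphisms and is closed under composition. In particular it contains every identity. For the record, we would recheck this directly. For an identity $(\mathrm{id},\mathrm{id})$, the map $\varepsilon^\flat$ is the identity of $\cO_A$, and $\Spec^\cC(\mathrm{id})$ is the identity, which is affine. For a composite $(g,\eta)\circ(f,\varepsilon)$, the adjoint map is the composite
\[
\cO_C\xrightarrow{\eta^\flat}|g|_\ast\cO_B\xrightarrow{|g|_\ast\varepsilon^\flat}|g|_\ast|f|_\ast\cO_A ,
\]
which is a composite of isomorphisms. Also, $\Spec^\cC$ is a functor and a composite of affine morphisms is affine.

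\emph{sRMS2.} Take a solid diagram $Z\xrightarrow{s'}W\xleftarrow{s}Y$ with $s\in\cW$. Apply Lemma \ref{prop:RMS2} 1) with $f:=s$ and $g:=s'$. This produces $Q$, a weak equivalence $t:=f':Q\to Z$ and a morphism $t':=g':Q\to Y$ such that $s'\circ t$ and $s\circ t'$ are schematic equal. The only composites in the square with the same source and target are these two, so the square is schematic commutative, with $X:=Q$ and $t\in\cW$. If moreover $s'\in\cW$, the second clause of Lemma \ref{prop:RMS2} 1) lets us choose $t'\in\cW$ as well.

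\emph{sRMS3.} Take $f,g:X\to Y$ and $s\in\cW$ with source $Y$ such that $s\circ f$ and $s\circ g$ are schematic equal. Lemma \ref{prop:RMS2} 2) gives directly a weak equivalence $t$ with target $X$ such that $f\circ t$ and $g\circ t$ are schematic equal. Here the object called $Q$ in the statement of that lemma has to be read as the new source of $t$, not the target of $s$.

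The main obstacle is not in this proposition but in Lemma \ref{prop:RMS2}, which we take as given. The only point we expect to need care is that the constructions in that lemma first reduce to objects of the form $(\cU)^2_{\mathrm{sch}}$ using Lemma \ref{lemma:factortech} and the weak equivalences $i_A$. To apply it to arbitrary objects, we would precompose with $i_A$ and postcompose as needed. Since $i_A\in\cW$ and $\cW$ is closed under composition by sRMS1, the resulting morphisms stay in $\cW$. Schematic equality is also preserved under composition, because $\Spec^\cC$ is a functor. So the reduction is harmless, and the proposition follows.
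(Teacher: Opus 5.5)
Your proposal is correct and matches the paper, which derives this proposition immediately from Lemma \ref{prop:RMS2}: part 1) gives sRMS2 (including the clause for $s'\in\cW$), part 2) gives sRMS3, and sRMS1 comes from the closure properties already recorded in the definition of $\cW$. Your explicit check of sRMS1 is sound, and so is your reading of the reused letter $Q$ in part 2) of the lemma as the new source of $t$.
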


We are now ready to define the localized category $\schtwo[\cW^{-1}]$ mimicking the usual construction used in the case of localization at right multiplicative system (see for example \cite[5.2.4]{Borceux} or \cite{salas} for a construction closer to our case).

\begin{definition}
We define the \textit{schematic localization}  $\schtwo[\cW^{-1}]$ of $\schtwo$ at the schematic multiplicative system $\cW$ to be the
the category defined as follows:
\begin{itemize}
\item[1)] The objects of $\schtwo[\cW^{-1}]$ coincide with the objects of $\schtwo$.
\item[2)] The morphisms from an object $A$ to an object $B$ are obtained considering the set of zig-zags $A\xleftarrow{s} P\xrightarrow{f} B$, where $s$ is a weak equivalence and $f$ is a morphism of $\schtwo$, and quotienting with the equivalence relation given by declaring two zig-zags $A\xleftarrow{s} P\xrightarrow{f} B$, $A\xleftarrow{s'} P'\xrightarrow{f'} B$ to be equivalent if there exists a zig-zag $P\xleftarrow{u} P''\xrightarrow{v} P'$ such that $u$ and $v$ are weak equivalences, $s\circ u$ and $s'\circ v$ are schematic equal and $f\circ u$ and $f'\circ v$ are schematic equal (see diagram \ref{loc:eqrel}).

\begin{equation}\label{loc:eqrel}
\xymatrix{ & & P''\ar[dl]_u\ar[dr]^v & & \\
&P\ar[dl]_s\ar[drrr]_f &  & P'\ar[dlll]^{s'}\ar[dr]^{f'} & \\
A & & &  & B
}
\end{equation}

\item[3)] The composition of two zig-zags $A\xleftarrow{s} I\xrightarrow{f} B$ and $B\xleftarrow{t} J\xrightarrow{g} C$ is defined to be any zig-zag $A\xleftarrow{s\circ r} K\xrightarrow{g\circ h} C$ where $K\xrightarrow{r}I$ is a weak equivalence and $K\xrightarrow{h}J$ is a morphism such that $f\circ r$ and $t\circ h$ are schematic equal (see diagram \ref{loc:comp}).
\begin{equation}\label{loc:comp}
\xymatrix{ & & K\ar[dl]_r\ar[dr]^h & & \\
&I\ar[dl]_s\ar[dr]^f &  & J\ar[dl]_{t}\ar[dr]^{g} & \\
A & & B &  & C
}
\end{equation}
\end{itemize}
\end{definition}

\begin{observation}
We explicitly note that the category $\schtwo[\cW^{-1}]$ is not a localization of categories in the usual sense, i.e. a category satisfying a universal property such as the one stated in \cite[5.2.1]{Borceux}. In our case, only the properties  stated in Proposition \ref{prop:univloc} hold. 
\end{observation}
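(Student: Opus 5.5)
The statement has two parts, and I would prove them in order. First, the definition just given really produces a category. Second, this category generally does \emph{not} satisfy the universal property of a localization in the sense of \cite[5.2.1]{Borceux}.

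For the first part I would verify, in order, three things.
\begin{itemize}
\item The relation on zig-zags in 2) is an equivalence relation. Reflexivity uses $u=v=\mathrm{id}$. Symmetry is clear. Transitivity uses sRMS2 from Lemma \ref{prop:RMS2} 1), applied to the two weak equivalences $P''_1\to P'$ and $P''_2\to P'$, to get a common weak-equivalence source. The required schematic equalities then follow because $\Spec^\cC$ is a functor, so schematic equality is stable under composition on either side.
\item Composition in 3) exists by sRMS2.
\item Composition is independent of choices and is associative. Here sRMS3 (Lemma \ref{prop:RMS2} 2)) equalizes schematically any two choices of the square, exactly as in the classical calculus of fractions in \cite[5.2.4]{Borceux}.
\end{itemize}
I would also record the canonical functor $Q:\schtwo\to\schtwo[\cW^{-1}]$, $\psi\mapsto(\mathrm{id}\leftarrow\cdot\xrightarrow{\psi})$. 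It sends $\cW$ to isomorphisms, and it sends schematic equal morphisms to the same arrow, taking $P''=P$ and $u=v=\mathrm{id}$. These are the properties I expect Proposition \ref{prop:univloc} to collect.

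For the failure of the universal property, the strategy is as follows. Suppose $\schtwo[\cW^{-1}]$ were the honest localization $L$. Then every functor $\Phi:\schtwo\to\mathcal D$ inverting $\cW$ would factor through $Q$. Any such $\Phi$ would therefore identify schematic equal morphisms, since $Q$ does. So it suffices to exhibit a functor $\Phi$ that inverts $\cW$ but distinguishes two schematic equal morphisms $\psi,\psi'$.

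The obvious candidate pair is $\psi_a,\psi_b$ from the example preceding Definition \ref{def:srms}, but it does not work. There $\pi\circ\psi_a=\pi\circ\psi_b=\mathrm{id}_A$ on the nose, so in $L$ both equal $Q(\pi)^{-1}$. I would therefore look for schematic equal morphisms that are not forced equal by genuine (non-schematic) relations in $\schtwo$. Natural candidates are morphisms differing only on simplices $e$ with $F(e)$ the trivial ring. These exist because a target simplex whose ring is trivial receives no ring map into a nontrivial ring (Lemma \ref{lemma:factortech}), so $\Spec^\cC$ does not see them. For $\Phi$ I would try a combinatorial invariant of the whole semisimplicial set $\cA$, including its trivial-ring part, rather than of $\G(\cU_A)_\bullet$. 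An example is the set of connected components of $\cA$ modulo those carrying only trivial rings, with $\Phi$ built so that it still sees a chosen trivial-ring simplex.

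The main obstacle is checking that such a $\Phi$ inverts every weak equivalence. Weak equivalences like $i_A$ in Proposition \ref{prop:techschtwo1} 2) add or remove exactly those trivial-ring simplices, so the invariant must be chosen carefully. If no such $\Phi$ can be found, the fallback is to show directly that the comparison functor $L\to\schtwo[\cW^{-1}]$ induced by $Q$ is not faithful on a concrete pair.
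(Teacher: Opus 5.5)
\noindent\textbf{The gap is in the second part.} Your reduction is sound. $I_\cW$ identifies schematic-equal morphisms by construction (take $P''=A$ and $u=v=\mathrm{id}$). So any functor $\Phi$ that inverts $\cW$ and separates two schematic-equal morphisms refutes the universal property.

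The paper gives no argument for this observation. Your first part is really the paper's separate well-definedness proposition. So everything rests on producing such a $\Phi$, and you do not produce one.

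Worse, the family in which you propose to search provably contains no witness. Suppose $\psi,\psi'\colon A\to B$ differ only on simplices whose ring is zero. Let $t\colon A'\hookrightarrow A$ be the inclusion of the full sub-object on the vertices with non-zero ring; this is essentially $i_A$ of Proposition \ref{prop:techschtwo1} 2).
\begin{itemize}
\item $t\in\cW$. Every removed simplex contains a vertex with zero ring, so both sides of $\varepsilon_t^\flat$ vanish there. For a kept simplex $\sigma$, the set $|t|^{-1}(U_\sigma)$ has minimum $\sigma$. Finally, $\Spec^\cC(t)$ is an isomorphism.
\item $\psi\circ t=\psi'\circ t$. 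The two morphisms agree on the vertices of $A'$, and simplices of a degenerate expansion are determined by their vertices. Ring components into zero rings are forced.
\end{itemize}
Hence $\Phi(\psi)=\Phi(\psi')$ for every $\cW$-inverting $\Phi$, whatever invariant you choose. The ``main obstacle'' you flag is fatal, not a matter of care, and your fallback only restates the goal.

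\noindent\textbf{The pair you discarded.} A witness must differ on simplices with non-zero ring, and the natural one is the pair $\psi_a,\psi_b$. You discarded it only because the paper asserts $\pi\in\cW$, but the definition as written does not give this. Over the 2-simplex $uuu$ of the target, $|f_\pi|^{-1}(U_{uuu})$ consists of the four open points $aaa,aab,abb,bbb$. So $\varepsilon_\pi^\flat(U_{uuu})$ is the diagonal $R_X\to R_X^{4}$, which is not an isomorphism.

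With $\cW$ as defined, you can close the gap directly:
\begin{itemize}
\item Let $\Phi(A)$ be the set of pairs $(v,c)$, where $v\in\cA_0$ and $c$ is a connected component of $\Spec F_A(v)$.
\item A morphism $(f,\varepsilon)$ sends $(v,c)$ to $(f(v),c')$, where $c'$ is the component containing $\Spec(\varepsilon_v)(c)$.
\item If $(f,\varepsilon)\colon A\to B$ lies in $\cW$, the condition at each degenerate simplex $vvv$ of $\cB$ gives $F_B(v)\cong\prod_{\tau\mapsto vvv}F_A(\tau)$. Here you use $F(w)\cong F(www)$ in a degenerate expansion.
\item Every factor with $\tau$ not of the form $www$ vanishes. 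Its spectrum maps into the summand of $www$ for any vertex $w$ of $\tau$, and the summands are disjoint.
\item Hence $\Spec F_B(v)=\bigsqcup_{w\in f^{-1}(v)}\Spec F_A(w)$, so $\Phi(f,\varepsilon)$ is a bijection and $\Phi$ inverts $\cW$.
\item Meanwhile $\Phi(\psi_a)\neq\Phi(\psi_b)$ whenever $R_X\neq 0$.
\end{itemize}

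If instead you adopt the larger class of weak equivalences the paper actually uses (collapses like $\pi$, and the refinement maps of Proposition \ref{prop:techschtwo1} 3)), then $\psi_a$ and $\psi_b$ really are identified in the honest localization. In that case you need a genuinely different pair, together with an invariant that is stable under refinements. The proposal supplies neither.
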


\begin{proposition}
$\schtwo[\cW^{-1}]$ is well defined.
\end{proposition}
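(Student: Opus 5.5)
To show $\schtwo[\cW^{-1}]$ is well defined we must check three things: the relation on zig-zags is an equivalence relation; composition exists and does not depend on the choices made or on the chosen representatives; and composition is associative with identities. The guiding idea is to push every verification down to schemes through $\Spec^\cC$. By Proposition \ref{prop:techschtwo1} 1), weak equivalences become isomorphisms there, and schematic equality is by definition equality after applying $\Spec^\cC$. Concretely, to a zig-zag $A\xleftarrow{s}P\xrightarrow{f}B$ we attach the scheme morphism $\Phi(s,f):=\Spec^\cC(f)\circ\Spec^\cC(s)^{-1}$. Most checks then reduce to the observation that two zig-zags are equivalent if and only if they have the same $\Phi$.

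We first prove this observation. Suppose $(s,f)\sim(s',f')$ via $P\xleftarrow{u}P''\xrightarrow{v}P'$. Applying $\Spec^\cC$ to $s\circ u$ and $s'\circ v$, and to $f\circ u$ and $f'\circ v$, and inverting the isomorphisms $\Spec^\cC(u)$ and $\Spec^\cC(v)$, gives $\Phi(s,f)=\Phi(s',f')$. Conversely, suppose $\Phi(s,f)=\Phi(s',f')$. Apply sRMS2 (Lemma \ref{prop:RMS2} 1)) to the pair of weak equivalences $s$ and $s'$. This produces $P''$ with weak equivalences $u:P''\to P$ and $v:P''\to P'$ such that $s\circ u$ and $s'\circ v$ are schematic equal. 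Then $\Spec^\cC(f\circ u)=\Phi(s,f)\circ\Spec^\cC(s\circ u)=\Phi(s',f')\circ\Spec^\cC(s'\circ v)=\Spec^\cC(f'\circ v)$. Once this characterization is in place, reflexivity, symmetry and transitivity are immediate, since the relation is the kernel of the map $\Phi$. This also sidesteps the usual calculus-of-fractions argument for transitivity, which would need sRMS3.

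Next we handle composition. Existence of a composite follows directly from sRMS2 (Lemma \ref{prop:RMS2} 1)) applied to $t$ and $f$: it gives $K$, a weak equivalence $r:K\to I$ and a morphism $h:K\to J$ with $f\circ r$ schematic equal to $t\circ h$. For any such choice we compute $\Phi(s\circ r,g\circ h)=\Spec^\cC(g)\Spec^\cC(h)\Spec^\cC(r)^{-1}\Spec^\cC(s)^{-1}$. From $\Spec^\cC(t)\Spec^\cC(h)=\Spec^\cC(f)\Spec^\cC(r)$ we get $\Spec^\cC(h)\Spec^\cC(r)^{-1}=\Spec^\cC(t)^{-1}\Spec^\cC(f)$. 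Hence $\Phi$ of the composite equals $\Phi(t,g)\circ\Phi(s,f)$. This quantity depends neither on $K,r,h$ nor on the representatives, because $\Phi$ is constant on equivalence classes. It follows at once that composition is associative, and that the class of $A\xleftarrow{\mathrm{id}}A\xrightarrow{\mathrm{id}}A$ is a two-sided identity, since $\Phi$ turns composition into composition of scheme morphisms.

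The main obstacle is the converse direction of the characterization, namely equal $\Phi$ implies equivalent zig-zags. It needs sRMS2 in its strengthened form, in which both legs of the completed square are weak equivalences. We also have to check that Lemma \ref{prop:RMS2} 1) really delivers this when both given maps lie in $\cW$, including the requirement that $\Spec^\cC$ of the constructed $g'$ is affine. If that clause turns out to be delicate, the fallback is to build $P''$ directly as $(\cU_P\times_X\cU_{P'})^2_{\mathrm{sch}}$, where $X=\Spec^\cC(A)$ and the covers are transported via the isomorphisms $\Spec^\cC(s)$ and $\Spec^\cC(s')$. The maps $u$ and $v$ would be the projections $p_{\cU}$ from the observation following Definition \ref{cor:techschtwo2}, composed with the canonical weak equivalences $i_P$ and $i_{P'}$ of Proposition \ref{prop:techschtwo1} 2). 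These are weak equivalences by construction.
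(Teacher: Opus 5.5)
Your argument is correct, granted Proposition~\ref{prop:techschtwo1}~1) and the strengthened clause of Lemma~\ref{prop:RMS2}~1). It is, however, organized differently from the paper's proof.

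\textbf{The paper's route.} The paper says to run the standard calculus-of-fractions proof with sRMS2 and sRMS3 in place of RMS2 and RMS3, and with schematic equalities in place of equalities. The only point it singles out is that the strengthened sRMS2 is needed, because both legs $u,v$ of an equivalence must lie in $\cW$.

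\textbf{Your route.} You use the fact that schematic equality is, by definition, the kernel of $\Spec^\cC$, and that $\Spec^\cC$ inverts $\cW$. From this you prove that two zig-zags are equivalent if and only if they have the same $\Phi(s,f)=\Spec^\cC(f)\circ\Spec^\cC(s)^{-1}$. After that, the equivalence-relation axioms, independence of the composite from all choices, associativity and identities are all inherited from $\Sch$, with no diagram chases.

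\textbf{Comparison.} Your converse direction uses exactly the strengthened sRMS2 that the paper flags, and you never need sRMS3. In fact sRMS3 is vacuous here: $\Spec^\cC(s)$ is invertible for $s\in\cW$, so schematic equality of $s\circ f$ and $s\circ g$ already forces schematic equality of $f$ and $g$, and $t=\mathrm{id}$ works. Your route also delivers more than the paper's:
\begin{itemize}
\item it shows at once that $\Spec^\cC_\cW$ of Proposition~\ref{prop:univloc}~2) is well defined and faithful, which is the faithfulness part of Theorem~\ref{main-res} that the paper proves separately via an explicit common refinement;
\item it shows that each hom-set injects into a set of scheme morphisms, so no size issue arises.
\end{itemize}
The paper's template would only be needed if schematic equality were some congruence not known to be the kernel of a $\cW$-inverting functor.

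\textbf{The obstacle you flag.} Affineness is automatic. With $u,v$ as in your converse, $\Spec^\cC(u)=\Spec^\cC(s)^{-1}\circ\Spec^\cC(s')\circ\Spec^\cC(v)$ is an isomorphism. The substantive requirement is that $\varepsilon^\flat$ be an isomorphism for the refinement-type morphisms produced by Lemma~\ref{prop:RMS2}~1). The paper's proof rests on this in exactly the same way. Your fallback through the projections $p_{\cU}$ relies on the same claim, so it is not an independent safeguard.

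If you do check this condition, note what happens at a 2-simplex $\tau$ of the target. There $U_\tau=\{\tau\}$, so $\varepsilon^\flat(U_\tau)$ is the diagonal map from the ring attached to $\tau$ into the product of the rings of all 2-simplices lying over $\tau$. For a nontrivial refinement this map is generally not surjective. Only the condition on vertex stars is actually used in the proof of Proposition~\ref{prop:techschtwo1}~1).
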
 
\begin{proof}
The proof follows arguing formally as in the case of the localization of a category at a right multiplicative collection of morphisms (see for example \cite{Borceux} 5.2.4 and the diagrams therein) using Properties sRMS2 and sRMS3 in place of the properties RMS2 and RMS3 of a right multiplicative system (see Definitions \ref{def:srms} and \ref{def:rms}). Note that we require both $u$ and $v$ in point 2) of the previous definition to be weak equivalences (as done also in \cite{salas}, for example), while when localizing at right calculi of fractions usually a weaker hypothesis is required (indeed, in the case of sRMS2, using the terminology of Proposition \ref{prop:RMS2} 1), if $g$ is a weak equivalence it follows that we can take $g'$ to be a weak equivalence as well).
\end{proof}

\subsection{The equivalence of categories between $\schtwo[\cW^{-1}]$ and schemes}
To establish an equivalence of categories between $\schtwo[\cW^{-1}]$ and the category of schemes we first define a functor between them.

\begin{proposition}\label{prop:univloc}
We have the following:
\begin{itemize}
\item[1)] There exists a functor
$$
I_\cW:\schtwo\rightarrow \schtwo[\cW^{-1}]
$$
that is the identity on the objects and which sends a morphism $f:A\rightarrow B$ in $\schtwo$ to the (class of) the zig-zag $A\xleftarrow{id_A}A\xrightarrow{f} B$.
\item[2)]The functor $I_\cW$ factors through a functor
$$
\Spec_\cW^\cC:\schtwo[\cW^{-1}]\rightarrow\Sch
$$
such that $\Spec_\cW^\cC\circ I_\cW=\Spec^\cC$. This functor is the identity on the objects and sends the (class of) a zig-zag $A\xleftarrow{s}P\xrightarrow{f}B$ to $\Spec^\cC(f)\circ\Spec^\cC(s)^{-1}:\Spec^\cC(A)\rightarrow\Spec^\cC(B)$.
\end{itemize}
\end{proposition}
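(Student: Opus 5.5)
For 1), I will first check that $I_\cW$ is well defined, which is immediate because $id_A$ is a weak equivalence. Functoriality then reduces to the composition rule 3) in the definition of $\schtwo[\cW^{-1}]$. To compose $A\xleftarrow{id_A}A\xrightarrow{f}B$ with $B\xleftarrow{id_B}B\xrightarrow{g}C$, I may take $K=A$, $r=id_A$ and $h=f$. Then $f\circ r=id_B\circ h$ holds on the nose, hence schematically, and the composite is $A\xleftarrow{id_A}A\xrightarrow{g\circ f}C$, which is $I_\cW(g\circ f)$. Identities are sent to the class of $A\xleftarrow{id}A\xrightarrow{id}A$, which is the identity of $\schtwo[\cW^{-1}]$. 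Since composition in the localized category is independent of the chosen completion (this is the well-definedness proposition), any other completion gives the same class.

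For 2), the prescription makes sense on representatives because $\Spec^\cC(s)$ is invertible by Proposition \ref{prop:techschtwo1} 1). Next I will show it is constant on equivalence classes. Suppose $A\xleftarrow{s}P\xrightarrow{f}B$ and $A\xleftarrow{s'}P'\xrightarrow{f'}B$ are related via $P\xleftarrow{u}P''\xrightarrow{v}P'$. Schematic equality means exactly that $\Spec^\cC(s)\Spec^\cC(u)=\Spec^\cC(s')\Spec^\cC(v)$ and $\Spec^\cC(f)\Spec^\cC(u)=\Spec^\cC(f')\Spec^\cC(v)$. Since $\Spec^\cC(u)$ and $\Spec^\cC(v)$ are isomorphisms, we get
$\Spec^\cC(f)\Spec^\cC(s)^{-1}=\Spec^\cC(f)\Spec^\cC(u)\Spec^\cC(u)^{-1}\Spec^\cC(s)^{-1}=\Spec^\cC(f')\Spec^\cC(v)\,(\Spec^\cC(s')\Spec^\cC(v))^{-1}=\Spec^\cC(f')\Spec^\cC(s')^{-1}$.
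The point is that schematic equality is precisely equality after applying $\Spec^\cC$, so the whole quotient construction is invisible to $\Spec^\cC$.

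The remaining step, and the only one needing care, is compatibility with composition. Take a composite $A\xleftarrow{s\circ r}K\xrightarrow{g\circ h}C$ with $f\circ r$ schematically equal to $t\circ h$. Then $\Spec^\cC(h)=\Spec^\cC(t)^{-1}\Spec^\cC(f)\Spec^\cC(r)$, so
$\Spec^\cC(g\circ h)\Spec^\cC(s\circ r)^{-1}=\Spec^\cC(g)\Spec^\cC(t)^{-1}\Spec^\cC(f)\Spec^\cC(r)\Spec^\cC(r)^{-1}\Spec^\cC(s)^{-1}$,
which is the composite of the images of the two zig-zags. For identities, $A\xleftarrow{id}A\xrightarrow{id}A$ maps to $id$. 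Finally, $\Spec^\cC_\cW\circ I_\cW(f)=\Spec^\cC(f)\circ id^{-1}=\Spec^\cC(f)$ gives the factorization. The main obstacle is only that composition in $\schtwo[\cW^{-1}]$ involves a choice. That is handled by the computation above, which holds for every admissible choice of $(K,r,h)$, since $\Spec^\cC$ of a weak equivalence is invertible.
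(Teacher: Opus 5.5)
Your proposal is correct and follows the same route as the paper. The paper's proof simply defers to the standard verification for localization at a right multiplicative system (Stacks 04VK, Borceux), whose only input is that $\Spec^\cC$ sends weak equivalences to isomorphisms (Proposition \ref{prop:techschtwo1} 1)), together with the already-asserted well-definedness of composition in $\schtwo[\cW^{-1}]$. You carry out that verification explicitly, correctly observing that schematic equalities are by definition invisible after applying $\Spec^\cC$, so the relaxed relations add no difficulty.
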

\begin{proof}
The proof follows mutatis mutandis arguing as in \cite[\href{https://stacks.math.columbia.edu/tag/04VK}{Tag 04VK}]{SP} (or as in \cite[page 186]{Borceux}) using the fact that weak equivalences are sent to isomorphisms by the functor $\Spec^\cC$.
\end{proof}

\begin{observation}\label{rmk:essurj}
    Let $S$ be a scheme and $\cU$ a finite covering of it by affine open subschemes. Then one can check that $\Spec_\cW^\cC ((\cU)^2_{\textrm{sch}})\cong S$. 
\end{observation}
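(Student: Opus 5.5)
The claim is that $\Spec_\cW^\cC((\cU)^2_{\mathrm{sch}})\cong S$. By Proposition \ref{prop:univloc} 2), $\Spec_\cW^\cC$ agrees with $\Spec^\cC$ on objects. So it suffices to show $\Spec^\cC((\cU)^2_{\mathrm{sch}})\cong S$, which reduces everything to a computation in $\schtwo$.

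\textbf{Unwinding the definitions.} By Definition \ref{specC},
\[
\Spec^\cC((\cU)^2_{\mathrm{sch}})=\Spec^S\bigl(A(P_{\G(\cU)_\bullet}),\cO\bigr),
\]
where $\cO$ is the sheaf determined on the base of irreducible opens by $\F^2_\cU$, as in Observation \ref{rmk:schtwowe} (1). By Definition \ref{def:SpecS}, $\Spec^S$ of this paraschematic space is the colimit of the functor
\[
P_{\G(\cU)_\bullet}^{op}\to\RngSpcs,\qquad p\mapsto \Spec\,\cO(U_p).
\]
Since $U_p$ is the smallest open containing $p$, we have $\cO(U_p)=\F^2_\cU(p)$. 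This ring is $\cO_S$ of the corresponding $U_i$, $U_{ij}$ or $U_{ijk}$, and on degenerate simplices it is the ring of the face it was copied from.

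\textbf{Identifying the functor with $\F^\cU_{\mathrm{can}}$.} Each of these opens is affine: $S$ is semi-separated, so $U_{ij}$ and $U_{ijk}$ are affine. Hence $\Spec\,\cO_S(U)\cong U$ naturally, via the antiequivalence between affine schemes and rings. The transition maps of $\F^2_\cU$ are restriction maps, so under $\Spec$ they correspond to the open inclusions. This gives a natural isomorphism between the functor above and $\F^\cU_{\mathrm{can}}$, the degenerate expansion of $F^\cU_{\mathrm{can}}$; this is exactly the argument in the proof of Proposition \ref{prop:fispcssalascompschemes}, now carried out on the degenerate expansion. Naturality on degenerate simplices is automatic, because those arrows are identities or the same open embeddings.

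\textbf{Computing the colimit.} Observation \ref{obs:extendediso} gives $\colim\F^\cU_{\mathrm{can}}\cong\colim F^\cU_{\mathrm{can}}$, and Lemma \ref{lemma:Fcan} gives $\colim F^\cU_{\mathrm{can}}\cong S$. Hence $\Spec^\cC((\cU)^2_{\mathrm{sch}})\cong S$, which is the claim; this is the content of Observation \ref{obs:revschtwo}.

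\textbf{Main obstacle.} The only delicate point is whether the colimit of the degenerate expansion really agrees with that of $F^\cU_{\mathrm{can}}$. I would check this by cofinality: every degenerate simplex maps to a nondegenerate face carrying the same value through an identity arrow. So each added object of $\Gamma(\G(\cU)_\bullet)^{op}$ is isomorphic in the cocone data to one already present, and the added cocone components are forced. Therefore cocones over the two diagrams correspond bijectively. One also has to handle the case where some $U_{ijk}=\emptyset$, giving the zero ring and the empty scheme; this case is harmless.
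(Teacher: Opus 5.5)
Your proposal is correct and follows the paper's own argument. Proposition \ref{prop:univloc} gives $\Spec_\cW^\cC\circ I_\cW=\Spec^\cC$ with $I_\cW$ the identity on objects, so the claim reduces to Observation \ref{obs:revschtwo}; that observation rests on the identification in the proof of Proposition \ref{prop:fispcssalascompschemes} together with Observation \ref{obs:extendediso} and Lemma \ref{lemma:Fcan}, exactly as you use them.

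One small correction: the inclusion $\Gamma(G(\cU)_\bullet)^{op}\subseteq\Gamma(\G(\cU)_\bullet)^{op}$ is not literally final, because the two face arrows from a loop $u\to u$ to $u$ lie in different components of the relevant comma category. The correct justification is therefore the direct cocone comparison you sketch, which works because $\F^\cU_{\mathrm{can}}$ sends such parallel face arrows to the same identity map.
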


\begin{theorem}\label{main-res}
The functor $$\Spec_\cW^\cC:\schtwo[\cW^{-1}]\rightarrow\Sch$$ is an equivalence of categories,
where $\Sch$ is the category of quasi-compact semi-separated schemes.
\end{theorem}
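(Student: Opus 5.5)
We prove that $\Spec_\cW^\cC$ is essentially surjective, full and faithful. Essential surjectivity is immediate. Given $S\in\Sch$, quasi-compactness yields a finite affine open cover $\cU$. By Observation \ref{obs:revschtwo} the pair $(\cU)^2_{\mathrm{sch}}$ is an object of $\schtwo$, and by Observation \ref{rmk:essurj} we have $\Spec_\cW^\cC((\cU)^2_{\mathrm{sch}})\cong S$.

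For fullness, take objects $A,B$ with $X=\Spec^\cC(A)$, $Y=\Spec^\cC(B)$ and a scheme morphism $h:X\to Y$. By Proposition \ref{prop:techschtwo1} 2) we have weak equivalences $i_A:(\cU_A)^2_{\mathrm{sch}}\to A$ and $i_B:(\cU_B)^2_{\mathrm{sch}}\to B$. It therefore suffices to realise $h$ by a zig-zag whose left leg lands in $(\cU_A)^2_{\mathrm{sch}}$ and then compose with $i_A$ on the left and $i_B$ on the right.

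The construction of this zig-zag proceeds in three steps.
\begin{itemize}
\item The preimage cover $h^{-1}\cU_B$ is an open, possibly non-affine, cover of $X$. Choose a complete affine subcover $\cV$ of it; this is possible since each $h^{-1}(U)$ is quasi-compact, because $X$ is quasi-compact and semi-separated and $U$ is affine.
\item Put $Q:=(\cU_A\times_X\cV)^2_{\mathrm{sch}}$. By the observation following Definition \ref{cor:techschtwo2}, the projection $p_{\cU_A}:Q\to(\cU_A)^2_{\mathrm{sch}}$ is a weak equivalence.
\item Define $q:Q\to(\cU_B)^2_{\mathrm{sch}}$ exactly as $g'$ in the proof of Lemma \ref{prop:RMS2} 1). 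On simplices it sends each $U_i\cap V_j$ to the vertex of $\cU_B$ indexing the member of $h^{-1}\cU_B$ containing $V_j$, and sends edges and triangles accordingly. On rings it uses the maps $\cO_Y(U_b\cap U_{b'}\cap\cdots)\to\cO_X(\text{corresponding }U_i\cap V_j\cap\cdots)$ induced by $h$.
\end{itemize}
Since $\Spec^\cC$ recovers a morphism of schemes by gluing its restrictions to an open cover, we get $\Spec^\cC(q)=h\circ\Spec^\cC(p_{\cU_A})$. Hence the zig-zag maps to $h$ under $\Spec_\cW^\cC$.

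For faithfulness, let $A\xleftarrow{s}P\xrightarrow{f}B$ and $A\xleftarrow{s'}P'\xrightarrow{f'}B$ be zig-zags with the same image, i.e.\ $\Spec^\cC(f)\Spec^\cC(s)^{-1}=\Spec^\cC(f')\Spec^\cC(s')^{-1}$.
\begin{itemize}
\item Apply sRMS2 (Lemma \ref{prop:RMS2} 1) with both legs weak equivalences) to $s$ and $s'$. This gives $P''$ with weak equivalences $u:P''\to P$ and $v:P''\to P'$ such that $s u$ and $s' v$ are schematic equal.
\item Applying $\Spec^\cC$ and using that $\Spec^\cC$ of a weak equivalence is invertible (Proposition \ref{prop:techschtwo1} 1)), the hypothesis gives $\Spec^\cC(fu)=\Spec^\cC(f'v)$.
\item Thus $fu$ and $f'v$ are schematic equal by definition. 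So $P\xleftarrow{u}P''\xrightarrow{v}P'$ exhibits the two zig-zags as equivalent.
\end{itemize}
Because the equivalence relation in the localization is phrased via schematic equality, faithfulness is essentially formal. Together with Proposition \ref{prop:univloc}, this gives the equivalence.

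The main obstacle is the fullness step. One must check that $q$ really is a morphism in $\schtwo$: the map of underlying semisimplicial sets must respect degenerate simplices and send nonempty intersections to edges of $\G(\cU_B)_\bullet$, and the ring maps must form a natural transformation. This is where the partition condition of a complete affine subcover is used, since it makes the vertex assignment well defined. I would first treat the case in which $h^{-1}(U_b)$ is already affine for every $b$, where $\cV=h^{-1}\cU_B$, and then reduce the general case to it by refinement.
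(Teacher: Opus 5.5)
\noindent Your outline reproduces the paper's proof almost verbatim. Essential surjectivity and the fullness construction are the paper's: $Q=(\cU_A\times_X\cV)^2_{\mathrm{sch}}$, the weak equivalence $p_{\cU_A}$, and a second leg built like $g'$. For faithfulness you obtain $\Spec^\cC(fu)=\Spec^\cC(f'v)$ formally from Lemma \ref{prop:RMS2} 1) and Proposition \ref{prop:techschtwo1} 1). The paper instead builds the triple refinement $\cU_A\times_X\cU^A_J\times_X\cU^A_I$ by hand; yours is a cleaner packaging of the same idea. (Minor point: a complete affine subcover of $h^{-1}\cU_B$ exists because $Y$ is semi-separated, so each $U_i\cap h^{-1}(U_b)=U_i\times_Y U_b$ is affine. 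Semi-separatedness of $X$ is not what is used.)

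\noindent The real problem is the step you yourself flag, and the partition condition does not resolve it. Morphisms in $\schtwo$ are maps of semisimplicial sets, so they commute with the face maps. A non-degenerate edge $x\to y$ must therefore go to an edge from the image of $x$ to the image of $y$, and the degenerate expansion adds loops but never reversed edges. So you need a total order on the vertices of $\G(\cU_A\times_X\cV)_\bullet$ such that, on every nonempty intersection, the $\cU_A$-label and the $\cU_B$-label are \emph{both} weakly increasing. The partition only makes the labels well defined, and in general no such order exists.

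\noindent A cleverer choice of $Q$ does not fix this. Let $X=Y=\mathbb{P}^1$, let $A=B=(\cU)^2_{\mathrm{sch}}$ with $\cU=\{U_1=\mathbb{P}^1\setminus\{0\}<U_2=\mathbb{P}^1\setminus\{\infty\}\}$, and let $h\colon t\mapsto 1/t$. Suppose $A\xleftarrow{s}Q\xrightarrow{g}A$ is a zig-zag with image $h$. Recall that $\Spec^\cC$ of a morphism maps $\Spec F_Q(w)$ into $\Spec F_A$ of the image vertex. Hence a vertex $w$ of $Q$ whose open contains the point over $\infty$ is sent to $U_1$ by $s$, and to $U_2$ by $g$ since $h(\infty)=0$. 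A vertex $w'$ covering the point over $0$ goes the other way. Since $\mathbb{P}^1$ is irreducible these two opens meet, so by Remark \ref{rmk:disjoint} $w$ and $w'$ are joined by a non-degenerate edge. The map $s$ forces this edge to be oriented $w\to w'$, and then $g$ would have to send it to an edge $U_2\to U_1$, which does not exist. So $h$ is not in the image of $\Spec_\cW^\cC$: with the definitions as given, the functor is not full.

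\noindent The paper's proof has exactly the same defect. It appears in its map $\tilde f$, and likewise in the projections $p_\cU,p_\cV$ on which Lemma \ref{prop:RMS2}, and hence your faithfulness step, rests. Your proposal is therefore as complete as the paper's argument, but neither establishes the statement. The framework itself has to change, for example by allowing both orientations of each edge or by using symmetric rather than ordered simplicial data.
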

\begin{proof}
We want to prove that $\Spec_\cW^\cC$ is essentially surjective, faithful and full. It is essentially surjective because of Observation \ref{rmk:essurj}.\\We prove that the functor is faithful. To do this, we need to show that if two zig-zags $F:=A\xleftarrow{s}I\xrightarrow{f}B$ and $G:=A\xleftarrow{s'}J\xrightarrow{f}B$ are sent to the same morphism by $\Spec_\cW^\cC$, then they are equivalent. We can assume that $A=(\cU_A)^2_{\mathrm{sch}}$, $B=(\cU_B)^2_{\mathrm{sch}}$, $I=(\cU_I)^2_{\mathrm{sch}}$ and $J=(\cU_J)^2_{\mathrm{sch}}$. As $\Spec^\cC(s)$ and $\Spec^\cC(s')$ are isomorphisms, we have that the covers $\cU^A_I:=\Spec^\cC(s)(\cU_I)$ and $\cU^A_J:=\Spec^\cC(s)(\cU_J)$ are coverings by open affine subschemes of $X:=\Spec^\cC(A)$. We explicitly note that $\G(\cU_I^A)_\bullet\cong\G(\cU_I)_\bullet$ and $\G(\cU_J^A)_\bullet\cong\G(\cU_J)_\bullet$. As a consequence, using the global sections of the affine scheme isomorphisms $$\Spec^\cC(\F_{\cU_A}^2(s'(p)))\supseteq \Spec^\cC(s')(\Spec^\cC(\F_{\cU_J}^2(p)))\xrightarrow{\cong}\Spec^\cC(\F_{\cU_J}^2(p)),\ p\in\G(\cU_J)_\bullet$$ we can define a map $s'_A:(\cU_J^A)^2_{\mathrm{sch}}\xrightarrow{\cong}J$. Analogously, we can construct a map $s_A:(\cU_I^A)^2_{\mathrm{sch}}\xrightarrow{\cong}I$. Consider now the covering $\cV:=\cU_A\times_X\cU_J^A\times_X\cU^A_I$ and the projections $p_I:=p_{\cU^A_I},p_J:=p_{\cU^A_J}$ and $p_A:=p_{\cU_A}$ defined in Def. \ref{cor:techschtwo2}. Let $Q:=(\cV)^2_{\mathrm{sch}}$.
Then the zig-zag $I\xleftarrow{p_I}Q\xrightarrow{p_J}J$ exhibits an equivalence between $F$ and $G$. Indeed, using the very definition of the objects and the morphisms we constructed we can check that $s\circ p_I$ and $s'\circ p_J$ are schematic equal and that $f\circ p_I$ and $g\circ p_J$ are schematic equal.\\
We now prove that $\Spec_\cW^\cC$ is full. Consider two objects $A,B\in\schtwo[\cW^{-1}]$ and any map $f:X:=\Spec^\cC(A)\rightarrow \Spec^\cC(B):=Y$ between them. We need to prove that there exists a zigzag $F:=A\xleftarrow{s}Q\xrightarrow{f'}B$ such that $\Spec_\cW^\cC(F)=f$. Because of Lemma \ref{lemma:factortech} we can assume that $A=(\cU_A)^2_{\mathrm{sch}}$, $B=(\cU_B)^2_{\mathrm{sch}}$. Consider the covering $f^{-1}(\cU_B)$ by open (possibly non affine) subschemes of $X$. Take a complete affine cover $\cQ$ of it, consider the covering $\cV:=\cU_A\times_X\cQ$ (that is still a complete affine subcovering of $f^{-1}(\cU_B)$) and define $Q:=(\cV)^2_{\mathrm{sch}}$, $s:=p_{\cU_A}: Q\xrightarrow{\simeq}A$. As $\cV$ is an open affine subcover of $f^{-1}(\cU_B)$, we have an induced semisimplicial morphism $\tilde{f}:\G(\cV)_\bullet\rightarrow\G(f^{-1}(\cU_B))_\bullet\cong\G(\cU_B)_\bullet$. Using the fact that the functor $\Hom_{\Sch}(-,\Spec^\cC(B))$ is a sheaf \cite[Proposition 3.5]{GW} we obtain, for all $p\in\Gamma(\G(\cV)_\bullet)$ a scheme morphism between affine schemes $X\supseteq\Spec(\F^2_{\cV}(p))\rightarrow\Spec(\F_{\cU_B}^2(\tilde{f}(p)))\subseteq Y$ factoring through an element of $f^{-1}(\cU_B)$ and gluing to $f$. We can use these morphisms, taking global sections, to define a map $\varepsilon_f:\F^2_{\cU_B}\circ\Gamma(\tilde{f})\rightarrow \F^2_{\cV}$. We define $f':=(\tilde{f},\varepsilon_f)$. The zig-zag $A\xleftarrow{s}Q\xrightarrow{f'}B$ is the one needed to complete the proof as one can check that $\Spec^\cC(f')=f\circ\Spec^\cC(s)$.
\end{proof}

\subsection{Finite spaces and manifolds} \label{sec-mflds}
In this section we make some remarks regarding the category of differentiable manifolds.

\medskip
Let $M$ be a differentiable manifold and let us consider, with a small abuse of notation, $M$ also as the ringed space $(M,C^\infty_M)$, where $C^\infty_M$ is the sheaf of differentiable functions on $M$.
The algebra of global sections $C^\infty_M(M)$ allows us to recover the points of $M$ via the {\sl Milnor exercise}
(see \cite{kms,nestruev}). More precisely, we say that a maximal ideal $\mathfrak{m}\subseteq C^\infty_M(M)$
is \textit{real} if $C^\infty_M(M)/\mathfrak{m} \cong \mathbb{R}$. The set of real maximal ideals $\mathrm{Spec}_m(C^\infty_M(M))$
is called the \textit{real spectrum} of $C^\infty_M(M)$ and we have the following natural correspondence (Milnor's exercise):
\begin{equation}\label{milnor}
\mathrm{Hom}(C^\infty_M(M),\R) \cong M
\end{equation}
resembling the correspondence between $\C$-points of a complex affine scheme $(S,\cO_S)$ and the maximal spectrum
of $\cO_S(S)$, the global sections of $\cO_S$.
The real spectrum of a differentiable manifold can then be given a topology, so that the correspondence (\ref{milnor}) is an homeomorphism.
We have even more similarities between the category of smooth manifolds (that we shall assume from now on to have a finite atlas) $\Man$ and affine schemes (see \cite{GS}). In fact:
\begin{itemize}
\item There is fully faithful contravariant functor $\Man\rightarrow \ralg$, $M\mapsto\cCinf_M$.
\item We can define a functor 
$\Specr:\ralg\rightarrow\RngSpcs$ 
such that $\Specr(A)$ $\cong M$, for
$A\cong\cCinf_M$ for some manifold $M$.
\end{itemize}

However, as pointed out in \cite{GS}, the category of smooth manifolds lacks some desirable properties that the category of schemes has. Hence to obtain results similar to the ones of \cite{salas} and their generalization in  Section \ref{subsec:schemes} one should then move to the category of \textit{differentiable spaces} as in \cite{GS}. Without getting into the technicalities of such construction is worth noticing that
most of the treatment in Sec. \ref{fin-sec} can be extended with no difficulty to the case of smooth manifolds and differentiable spaces, in particular:
\begin{itemize}
\item The concept of gluing data and gluing cube in Sec. \ref{GDrngspcs}.
\item Given a smooth manifold $S$ and an open covering $\cU$, the construction of the two ringed spaces $S_\cU$,
$S_\cU^2$ and the functor $F_\cU^2$ in Sec. \ref{sect:Gluingringedspaces}.
\end{itemize}

As for the category $\mantwo$, the construction is more involved, we make some comments below.

\begin{remark}
We could define a category $\mantwo$ by repeating verbatim the construction of the category $\schtwo$ and getting results analogue to the ones of Section \ref{subsec:schemes} using differentiable algebras as in \cite[page 30]{GS} in place of $\Rings$ and the functor $\Specr$ of \cite[page 44]{GS} in place of $\Spec$. Given a manifold $M$ admitting a finite atlas $\cU$, one could then repeating the reasoning of Observation \ref{rmk:essurj}
to get an object of $\mantwo$. We believe objects of the category $\mantwo$ to be useful for applications: we leave the study of this category, and more in general the differentiable manifolds and spaces case, to future work.
Similarly, we observe that everything we have stated for differentiable manifold could
be extended to their super analogues \cite{fi}.
\end{remark}
We end this section with an observation, which is important for the study
vector bundles on graphs in \cite{FSZ26}.

\begin{observation}
Given a differentiable (or complex) manifold $M$, there exists a natural correspondence
between isomorphism classes of $\cCinf$ (or holomorphic) vector bundles equipped with a flat connection and isomorphism classes of locally constant sheaves of real (complex) vector spaces on $M$ (see for example \cite{Voisin} Section 9.2). In particular, if $M$ is a differentiable manifold and $(V,\nabla)$ is a vector bundle on it with a flat connection, the desired locally constant sheaf $L_V$ of real vector bundles on $M$ in this correspondence is $\mathrm{ker}(\nabla)$ (interpreted as the local system of the flat sections of $V$, that is the ones annihilated by $\nabla$) and we have $V\cong L_V\otimes\cO_M$. Now, fix a differentiable manifold $M$, a vector bundle together with a flat connection $(V,\nabla)$ and assume that there exists a finite open cover $\cU=\lbrace U_i\subseteq M\rbrace$ of $M$ trivializing $V$ and such that $L_V$ obtained as before is isomorphic to a constant sheaf of vector spaces when restricted to each open submanifold of the cover $\cU$, that is $L_{V|U_i}\cong \underline{\mathbb{R}}^n$ for all $i$ and a given $n\in\N$. Consider now the space $(M_\cU^2,\cO_{M_\cU^2})$ considered, for example,
in Definition \ref{Fund::preschspcover}.
\begin{itemize}
    \item $V\cong L_V\otimes\cO_M$ defines a locally free module $V_{M_\cU^2}\otimes\cO_{M_\cU^2}$ where $V_{M_\cU^2}$ is a sheaf of $\mathbb{R}$-vector spaces on $M_\cU^2$.
\item $V_{M_\cU^2}$ is equivalent to the datum of a presheaf of vector spaces on the poset $P_{G(\cU)_\bullet}$ where all the restriction morphisms are isomorphisms. The latter combinatorial datum amounts to a choice of an $n$-dimensional vector space $F_v$ for each vertex $v$ of the
graph $G(\cU)$, of an isomorphism $F_v\cong F_w$ for each edge joining two vertices $v$ and $w$ of $G(\cU)$ and some cocycle conditions coming from the combinatorics triple intersections of the elements of the cover $\cU$ (there are no such cocycle conditions if we have a cover with empty triple intersections).
\end{itemize}
As a consequence, our treatment of vector bundles on graphs in \cite{FSZ26} inspired by the work \cite{gaybalmaz} appears to be, under suitable assumptions, the discrete analogous of the study of vector bundles with a flat connection (see also \cite{dimakis}).
\end{observation}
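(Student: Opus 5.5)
The plan is to make both bullet points explicit through the dictionary of Proposition \ref{prop:gross}. A sheaf on $M_\cU^2=A(P_{G(\cU)_\bullet})$ is the same thing as a presheaf on the poset $P_{G(\cU)_\bullet}$. Concretely, it assigns something to each basic open $U_{p_i},U_{p_{ij}},U_{p_{ijk}}$, and $\cO_{M_\cU^2}$ is the presheaf $p_i\mapsto C^\infty_M(U_i)$, $p_{ij}\mapsto C^\infty_M(U_{ij})$, $p_{ijk}\mapsto C^\infty_M(U_{ijk})$ of Observation \ref{rmk:gbullet}.

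First I define $V_{M_\cU^2}$ as the presheaf $p\mapsto L_V(U_p)$ on $P_{G(\cU)_\bullet}$. Here $U_p$ denotes the open set $U_i$, $U_{ij}$ or $U_{ijk}$ of $M$ corresponding to $p$, and the restriction maps are those of $L_V$. I take the tensor product $V_{M_\cU^2}\otimes_{\underline{\R}}\cO_{M_\cU^2}$ objectwise on the base. There is then a natural map $L_V(U_p)\otimes_\R C^\infty_M(U_p)\to V(U_p)$, induced by $V\cong L_V\otimes\cO_M$. On each $U_i$ we have $L_{V|U_i}\cong\underline{\R}^n$, so for every $p$ the sheaf $L_{V|U_p}$ is constant, and flat sections over $U_p$ are locally constant $\R^n$-valued maps. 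This makes the map above an isomorphism, with both sides free $C^\infty_M(U_p)$-modules. It follows that the module on $M_\cU^2$ is locally free, and restricted to the basic open $U_{p_i}$ it is $\cO_{M_\cU^2}^n$. This gives the first bullet. Compatibility with restrictions is inherited from naturality of $V\cong L_V\otimes\cO_M$.

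For the second bullet, I show that the restriction maps $L_V(U_q)\to L_V(U_p)$ (for $p\geq q$) are isomorphisms. Both sides are the spaces of global sections of a constant rank-$n$ local system on the smaller set $U_q\supseteq U_p$ and on $U_p$. Restriction is therefore an isomorphism exactly when the relevant opens are connected, since flat sections are determined by their value at one point. The main obstacle is precisely here: if some $U_{ij}$ or $U_{ijk}$ is disconnected, $L_V(U_{ij})$ has dimension $n\cdot\#\pi_0(U_{ij})$ and the claim fails as stated. I would resolve this by adding the tacit hypothesis that all nonempty members of $U_i,U_{ij},U_{ijk}$ are connected, as holds for a good cover. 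Alternatively, one can refine $\cU$ so that this holds, using that $M$ admits a finite good cover.

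Granting this, the data reduce to the following. Each vertex $v$ carries $F_v:=L_V(U_v)\cong\R^n$. Each edge $v\to w$ carries the isomorphism $F_v\xrightarrow{\cong}L_V(U_{vw})\xleftarrow{\cong}F_w$, obtained by composing one restriction with the inverse of the other. Each $2$-simplex imposes the cocycle condition, because functoriality of the presheaf on the sub-poset $\Gamma(T)^{op}\cong\cube$ forces the three edge isomorphisms of a triangle to compose to the identity through $L_V(U_{ijk})$. Conversely, such combinatorial data reassemble into a presheaf with invertible restrictions, by setting the value at an edge or triangle equal to $F$ at its first vertex. If no $3$-cliques with nonempty triple intersection occur, no condition survives. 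This gives the second bullet, and the final comparison with \cite{FSZ26} is then a matter of reading off definitions.
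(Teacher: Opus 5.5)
The paper states this Observation without proof, so your proposal can only be checked against the statement itself. Unpacking both bullets through Proposition~\ref{prop:gross} is the natural route. Your connectedness objection is correct: cover $S^1$ by two arcs whose intersection has two components, and take the flat line bundle with monodromy $-1$. Then $L_V(U_{12})\cong\R^2$, the restrictions $\R\to\R^2$ are not invertible, and no single edge isomorphism can record the monodromy.

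You need this hypothesis already in your first bullet, where you use it before stating it. If $U_p$ is disconnected, the map $L_V(U_p)\otimes_\R C^\infty_M(U_p)\to V(U_p)$ is surjective but not injective: a flat section supported on one component, tensored with a function supported on another, maps to $0$. So ``this makes the map above an isomorphism'' is false as written. If you only want a locally free module, $p\mapsto V(U_p)$ is free of rank $n$ on each $U_{p_i}$ with no connectedness needed. Your alternative of refining $\cU$ does not help either. It changes the space $M_\cU^2$ the statement is about, and a finite good cover need not exist, since by the nerve theorem it would make $M$ homotopy equivalent to a finite complex.

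The genuine gap is that connectedness of the nonempty $U_i,U_{ij},U_{ijk}$ still does not make all restriction maps of $p\mapsto L_V(U_p)$ invertible. By construction $G(\cU)_\bullet$ has a $2$-simplex for every $3$-clique of $G(\cU)$, including those with $U_{ijk}=\emptyset$, which the paper explicitly allows. At such a simplex $V_{M_\cU^2}(p_{ijk})=L_V(\emptyset)=0$ and the restrictions into it are $\R^n\to 0$. The functoriality argument you invoke then yields no cocycle condition. So ``each $2$-simplex imposes the cocycle condition'' is false, and it contradicts your own closing remark.

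Concretely, cover $S^1$ by three arcs with connected pairwise intersections and $U_{123}=\emptyset$; this is even a good cover. Then $G(\cU)$ is a triangle, and the flat M\"obius line bundle gives edge isomorphisms with $\phi_{23}\circ\phi_{12}\neq\phi_{13}$. Hence $V_{M_\cU^2}$ is not a presheaf with invertible restrictions on $P_{G(\cU)_\bullet}$, and its data are not of the form you describe.

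To close the gap you have two options:
\begin{itemize}
\item Work on the sub-semisimplicial set $\widetilde{G}(\cU)_\bullet$ from the remark following Definition~\ref{Fund::preschspcover}, which discards the $2$-simplices with empty triple intersection.
\item Weaken the second bullet: restrictions are isomorphisms among simplices with nonempty associated open set, the value is $0$ on the others, and only nonempty $U_{ijk}$ impose cocycle conditions. This is what the paper's parenthetical remark actually describes.
\end{itemize}
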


\subsection{Homology and Cohomology}\label{section:homcoh}

We discuss the homology and cohomology of the two finite
ringed spaces examined in Subsection \ref{subsec:schemesrs} $(S_\cU,\cO_{S_\cU})$ and $(S_\cU^2,\cO_{S_\cU^2})$.

Assume that $X$ is a finite topological space and that $\cF$
is a sheaf of abelian groups on it. Then giving a sheaf of abelian groups on $X$ amounts to giving a presheaf of abelian groups on the poset associated to $X$ by Proposition \ref{prop:gross}. 
Recall that this amounts to the datum of an abelian group $\cF(U_p)$ for each $p\in X$ together with morphisms $\cF(U_p)\rightarrow \cF(U_p)$ for each $p\leq q$. Following \cite[1.6.1]{salashomcoh}  we define sheaves $C^i\cF$ and cosheaves $C_i\cF$ on $X$ as $$C^i\cF(U)=\prod_{(x_0<...<x_i)\in U}\cF(U_{x_i})\qquad C_i\cF(U)=\bigoplus_{(x_0<...<x_i)\in U}\cF(U_{x_i})\quad U\subseteq X$$ 

These sheaves bundle to give a complex of sheaves: 
$$C^\bullet\quad 0\rightarrow C^0\cF\rightarrow C^1\cF\rightarrow ...\rightarrow C^n\cF\rightarrow 0\quad n=\mathrm{dim}X$$
Analogously, if $\widetilde{\cF}$ is a cosheaf, that is a sheaf on the topological space obtained from $X$ by considering the dual topology (open sets are the closed sets of $X$), we get a complex of cosheaves:
$$C_\bullet\quad 0\rightarrow C_n\widetilde{\cF}\rightarrow C_{n-1}\widetilde{\cF}\rightarrow ...\rightarrow C_0\widetilde{\cF}\rightarrow 0\quad n=\mathrm{dim}X$$We have the following.

\begin{theorem}[\cite{salashomcoh} 1.6.2, 1.6.4] If $\cF$ and $\widetilde{\cF}$ are a sheaf and a cosheaf of abelian groups on a finite topological space $X$, for any open subset $U\subseteq X$ and closed subset $Z\subseteq X$ we have for all natural numbers $i$ $$H^i(C^\bullet \cF(U))\cong H^i(U,\cF)\qquad H_i(C_\bullet \widetilde{\cF}(Z))\cong H_i(Z,\widetilde{\cF})$$
where $H^i(U,\cF)$, $H_i(Z,\widetilde{\cF})$ are the
ordinary sheaf cohomology and homology groups.
\end{theorem}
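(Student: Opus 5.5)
The plan is to exploit the fact that the \v{C}ech-type complex $C^\bullet\cF$ is a resolution of $\cF$ by flasque sheaves. I would first prove this on the level of stalks, then pass to sections over an open set. The cosheaf statement then follows by the dual argument on the space with the dual topology.

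\emph{Step 1: the complex is a resolution.} Since $X$ is finite, the stalk of any sheaf at $p$ is its value on $U_p$. So it suffices to show that, for each $p$, the augmented complex
\[
0\rightarrow\cF(U_p)\rightarrow C^0\cF(U_p)\rightarrow C^1\cF(U_p)\rightarrow\cdots
\]
is exact. The chains $x_0<\dots<x_i$ lying in $U_p$ form the order complex of the poset $U_p$, and this poset has the minimum $p$. I would construct a contracting homotopy by the cone trick: send a cochain $c$ to the cochain $h c$ defined by
\[
(hc)(x_0<\dots<x_{i-1})=c(p<x_0<\dots<x_{i-1})
\]
when $x_0\neq p$, and by $0$ otherwise. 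Note that the value sits in $\cF(U_{x_{i-1}})$ in both cases, since prefixing $p$ does not change the last element. Checking $dh+hd=\mathrm{id}$ in positive degrees, and exactness at degree $0$, is a routine sign computation with the standard alternating coboundary, where the face deleting the last vertex uses the restriction $\cF(U_{x_{i-1}})\rightarrow\cF(U_{x_i})$.

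\emph{Step 2: acyclicity.} Each $C^i\cF$ is a finite product of sheaves of the form $(j_x)_\ast$ of a constant value, namely sections supported over chains. More concretely, a section over $U$ is an arbitrary family indexed by the chains contained in $U$, and the restriction maps are projections. These projections are surjective, so each $C^i\cF$ is flasque and hence $\Gamma(U,-)$-acyclic. Combining Step 1 with Step 2, the standard fact that a flasque resolution computes sheaf cohomology gives $H^i(C^\bullet\cF(U))\cong H^i(U,\cF)$. Restricting the resolution to the open set $U$ keeps it a flasque resolution of $\cF|_U$.

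\emph{Step 3: cosheaves.} For $\widetilde{\cF}$, pass to $X$ with the dual topology, where the closed sets become open and the order is reversed. I would then repeat Steps 1 and 2 with direct sums in place of products, using the smallest closed neighbourhoods $C_p$ with $p$ as the maximum. Homology is defined as the derived functors of cosections, computed by a ``coflasque'' resolution, so the same cone homotopy yields the isomorphism.

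The main obstacle is the bookkeeping of the index conventions in Step 1: the chain values sit in $\cF(U_{x_i})$, the last vertex. One must check that the cone vertex $p$ is prepended at the start, so that the target group is unchanged, and that the boundary terms cancel correctly. I would write the homotopy out explicitly in low degrees first to fix the signs.
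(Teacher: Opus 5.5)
\textbf{Steps 1 and 2 (cohomology).} These are correct, and they are the standard argument behind the cited result; the paper gives no proof of its own, only the reference to \cite{salashomcoh}. The cone on the minimum $p$ of $U_p$ is a contracting homotopy of the augmented complex, and your signs work with the coboundary whose last face uses $\cF(U_{x_{i-1}})\to\cF(U_{x_i})$. Moreover, $C^i\cF$ is a finite product, over chains $x_0<\dots<x_i$, of the sheaves $U\mapsto\cF(U_{x_i})$ if $x_0\in U$ and $0$ otherwise. So it is flasque.

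\textbf{Step 3 (homology): the gap.} The dualization you describe does not work as stated. Suppose you regard $\widetilde{\cF}$ as a sheaf on the dual space $X^{*}$ and rerun Steps 1--2 there. You then get a cochain complex computing the right derived functors of the left exact functor $\Gamma(Z,-)$, that is, the \emph{cohomology} of $Z\subseteq X^{*}$, not $H_i$. Also, replacing products by direct sums changes nothing, since there are only finitely many chains.

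For homology you need three things instead. You must use the covariant corestriction maps of $\widetilde{\cF}$. You must treat $C_\bullet\widetilde{\cF}\to\widetilde{\cF}\to 0$ as a left resolution. And you must compute left derived functors of the right exact cosections functor. Exactness still follows from a cone homotopy, provided the coefficient group of each chain sits at the end opposite to the cone point.

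What is then missing is that each $C_i\widetilde{\cF}$ is acyclic for these left derived functors. Your phrase ``computed by a coflasque resolution'' presupposes this. It is the one point that needs an actual argument rather than an appeal to the analogy with ``flasque $\Rightarrow$ acyclic''.

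\textbf{How to close it.} On a finite space, cosheaves are functors on the (opposite) poset, and cosections over $Z$ are colimits over $Z$. The representable functors $P_x=\mathbb{Z}[\Hom(x,-)]$ are projective by Yoneda. Their colimit over $Z$ is $\mathbb{Z}$ if $x\in Z$ and $0$ otherwise. The summand of $C_i\widetilde{\cF}$ indexed by a chain is $A\otimes P_x$, where $A$ is the coefficient group of the chain and $x$ is its other endpoint. A free resolution $0\to L_1\to L_0\to A\to 0$ gives a projective resolution $L_\bullet\otimes P_x$ whose cosections are $0\to L_1\to L_0\to 0$ (or $0$). Hence all higher left derived functors vanish. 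The dual of the acyclic-resolution lemma (dimension shifting) then gives $H_i(C_\bullet\widetilde{\cF}(Z))\cong H_i(Z,\widetilde{\cF})$.
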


Let $S$ be either a quasi-compact and semi-separated scheme or a differentiable manifold, $\cU$ a finite (affine if $S$ is a scheme) covering by open embeddings of $S$.
To this datum we can associate paraschematic spaces $(S_\cU,\cO_{S_\cU})$ and $(S_\cU^2,\cO_{S_\cU^2})$.
Let $M$ be a quasi-coherent module on $S$.
We denote as $\pi_\ast M$ the quasi-coherent $\cO_{S_\cU}$-module obtained by pushforward from $M$ along the canonical projection $\pi:S\rightarrow S_\cU$ and as $\widetilde{M}$ the quasi-coherent $\cO_{S_\cU^2}$ module obtained from the equivalence given by Remark \ref{rmk:qcohsu2}.

\begin{proposition} Let the notation be as above.
Then
$$
H^i(C^\bullet \pi_\ast M(S_\cU))\cong H^i(S,M)\cong H^i(C^\bullet \widetilde{M}(S_\cU^2)), \qquad i=0,1
$$
\end{proposition}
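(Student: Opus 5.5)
The plan is to identify both complexes with Čech-type complexes of the cover $\cU$, using the theorem of \cite{salashomcoh} quoted above. That theorem gives $H^i(C^\bullet\pi_\ast M(S_\cU))\cong H^i(S_\cU,\pi_\ast M)$ and $H^i(C^\bullet\widetilde M(S_\cU^2))\cong H^i(S_\cU^2,\widetilde M)$. It therefore suffices to compare the sheaf cohomology of the finite spaces with $H^i(S,M)$ for $i=0,1$.

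\emph{Degree $0$.} This is formal. We have $H^0(S_\cU,\pi_\ast M)=(\pi_\ast M)(S_\cU)=M(S)$ by definition of the pushforward. On $S_\cU^2$, the module $\widetilde M$ corresponds under Proposition \ref{prop:gross} to the presheaf $p\mapsto M(U_p)$ on $P_{G(\cU)_\bullet}$, where $U_p$ ranges over $U_i$, $U_{ij}$ and $U_{ijk}$. Global sections are the limit of this diagram, that is, families $(m_i)\in\prod_i M(U_i)$ agreeing on the $U_{ij}$. By the sheaf axiom for $M$ on $S$ this limit is $M(S)$. The triple-intersection terms impose no further condition, since they only receive restriction maps.

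\emph{Degree $1$.} Here I would use the explicit complex $C^\bullet$. The key input is that each basic open $U_p$ of $S_\cU^2$ (respectively of $S_\cU$) has as ``coordinate ring'' an affine scheme, or at least (for $S$ a manifold) a space on which $M$ has no higher cohomology. I would proceed in three steps.
\begin{enumerate}
\item Show that $H^j(U_p,\widetilde M)=0$ for $j>0$. This holds because $U_p$ has a minimum $p$, so $\Gamma(U_p,-)$ is exact on finite spaces. This is automatic.
\item Use the Čech-to-derived spectral sequence on $S_\cU^2$ for the cover $\{U_{p_i}\}$. Its intersections $U_{p_i}\cap U_{p_j}$ are $U_{p_{ij}}$ together with the points $p_{ijk}$ above it; this set is not irreducible, in the non-schematic situation of Remark \ref{rmk:Ynotschematic}.
\item Extract the low-degree exact sequence
$$0\to \check H^1\to H^1\to \check H^0(\mathcal H^1).$$
\end{enumerate}

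Rather than run this spectral sequence, I would compute $H^1$ directly: classes in $H^1(S_\cU^2,\widetilde M)$ are $\widetilde M$-torsors, equivalently $1$-cocycles. Using the explicit description of $C^1$ and $C^2$ in terms of chains $x_0<x_1$ and $x_0<x_1<x_2$ in the poset, I would write down the cocycle and coboundary conditions and show they reduce to those of the Čech complex
$$\prod_i M(U_i)\to\prod_{i<j}M(U_{ij})\to\prod_{i<j<k}M(U_{ijk}).$$
The chains of type $p_i<p_{ij}<p_{ijk}$ supply exactly the Čech cocycle condition. The extra chains $p_{ij}<p_{ijk}$ contribute components that can be killed by a coboundary.

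Since $S$ is semi-separated and $\cU$ is affine, every $U_{ij}$ and $U_{ijk}$ is affine (or acyclic for $M$ in the manifold case, where $M$ is a fine-ish $C^\infty$ module). Leray's theorem then gives $\check H^1(\cU,M)\cong H^1(S,M)$. The same reduction applies to $S_\cU$, whose poset is the poset of distinct $U^s$; its basic opens $\pi^{-1}(U_p)=U^s$ are finite intersections of elements of $\cU$, hence affine. Thus $H^i(S_\cU,\pi_\ast M)$ is Čech cohomology for an affine cover, which by Leray again equals $H^i(S,M)$. For $S_\cU$ this in fact works in all degrees.

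\textbf{Main obstacle.} I expect the main obstacle to be the $H^1$ comparison on $S_\cU^2$: it must be checked carefully that the extra chains involving $p_{ijk}$ and the non-irreducible intersections $U_{p_i}\cap U_{p_j}$ do not alter $H^1$. This is plausibly exactly why the statement is restricted to $i=0,1$, since Remark \ref{rmk:Ynotschematic} suggests that $H^2$ may genuinely differ. I would first try the explicit cocycle normalization described above, chain by chain.
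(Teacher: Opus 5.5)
\emph{Verdict: your strategy is the paper's, but the degree-$1$ comparison on $S_\cU^2$ is never carried out, rests on a false claim, and the fallback normalization fails.}

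Your degree-$0$ argument is fine. The $S_\cU$ half is what the paper takes from Salas. Your overall strategy, comparing with the \v{C}ech complex of $\cU$ and then applying Leray on the semi-separated $S$, is the paper's.

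The gap is the degree-$1$ comparison on $S_\cU^2$. You leave it open as the ``main obstacle'', and the premise that makes it look hard is false. The basic open $U_{p_{ij}}=\{q\geq p_{ij}\}$ already contains every $p_{ijk}$. In the clique complex $G(\cU)_\bullet$, any simplex having $i$ and $j$ among its vertices has the edge $ij$ as a face. Hence $U_{p_i}\cap U_{p_j}$ is exactly $U_{p_{ij}}$ (or empty), which is irreducible. Likewise $U_{p_i}\cap U_{p_j}\cap U_{p_k}$ is $U_{p_{ijk}}$ or empty, and all fourfold intersections are empty because $G(\cU)_\bullet$ has dimension $2$. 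Together with your step 1, this says $\{U_{p_i}\}$ is a Leray cover for $\widetilde M$. So $H^\bullet(S_\cU^2,\widetilde M)$ is the cohomology of
\[
\prod_i M(U_i)\to\prod_{i<j}M(U_{ij})\to\prod_{i<j<k}M(U_{ijk})\to 0,
\]
which is the \v{C}ech complex of $\cU$ cut off above degree $2$. This gives $i=0,1$ at once; the term $\check H^0(\mathcal H^1)$ you worried about is zero. It also explains the restriction on $i$: $H^2(S_\cU^2,\widetilde M)$ is the full cokernel in degree $2$ rather than $\check H^2(\cU,M)$, and these can differ when fourfold intersections are nonempty, as in Remark \ref{rmk:Ynotschematic}. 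This is what the paper's one-line proof means by ``coincides in degrees $0,1,2$ with the \v{C}ech complex''. Strictly, it is the \v{C}ech complex of $\{U_{p_i}\}$, not Salas's complex $C^\bullet$, that coincides; the two have the same cohomology.

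Your fallback, normalizing cocycles of $C^\bullet\widetilde M(S_\cU^2)$ chain by chain, would fail as stated. A coboundary changes the component on $p_{ij}<p_{ijk}$ by $t(p_{ijk})-t(p_{ij})|_{U_{ijk}}$. Since an edge lies in several triangles, these components cannot in general all be killed. Concretely, suppose every $U_i$ is covered by the $U_{ij}$, $j\neq i$; for example, take three affine opens of $\mathbb{P}^1$ any two of which cover $\mathbb{P}^1$, and $M=\cO(-2)$. A $1$-cocycle with all these components zero has vertex--edge components $s(p_i<p_{ij})$ agreeing on the $U_{ijk}$. They therefore glue to sections $\beta_i\in M(U_i)$, and the cocycle is then the coboundary of $t(p_i)=-\beta_i$, $t(p_{ij})=t(p_{ijk})=0$. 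Yet $H^1(\mathbb{P}^1,\cO(-2))\neq 0$. The extra components are not removable: up to coboundary they are determined by the \v{C}ech cochain $s(p_j<p_{ij})-s(p_i<p_{ij})$. Making that route work needs an honest comparison map on $H^1$, which the Leray-cover observation makes unnecessary.
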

\begin{proof}
The first isomorphism is proved in \cite{salas} while the second one follows from noticing that $C^\bullet \widetilde{M}(S_\cU^2))$ coincides in degrees $0,1,2$ with the \v{C}ech complex of $M$ with respect to the cover $\cU$.
\end{proof}

Notice that for $i\geq 2$, in general we have that
$H^i(C^\bullet \pi_\ast M(S_\cU))$ $\not\cong$ $H^i(C^\bullet \widetilde{M}(S_\cU^2))$,
see Rem. \ref{rmk:Ynotschematic}.

\end{document}